\numberwithin{equation}{section}
\newcommand{\R}{\mathbb{R}}
\renewcommand{\theequation}{\arabic{section}.\arabic{equation}}
\newtheorem{Thm}{Theorem}[section]
\newtheorem{Lem}[Thm]{Lemma}
\newtheorem{Prop}[Thm]{Proposition}
\newtheorem{Rem}[Thm]{Remark}
\begin{document}
\title[{Normalized solutions to fractional Schr\"odinger equations}]
{Concentrated solutions to fractional Schr\"odinger equations with prescribed $L^2$-norm
}

\author[Q. Guo, P. Luo, C. Wang, J. Yang]{Qing Guo, Peng Luo, Chunhua Wang and Jing Yang}

\address[Qing Guo]{College of Science, Minzu University of China, Beijing 100081, China}
\email{guoqing0117@163.com}

\address[Peng Luo]{School of Mathematics and Statistics, Central China Normal University, Wuhan 430079, China}
\email{pluo@mail.ccnu.edu.cn}

\address[Chunhua Wang]{School of Mathematics and Statistics and Hubei Key Laboratory of Mathematical Sciences, Central China Normal University, Wuhan 430079, China}
\email{chunhuawang@mail.ccnu.edu.cn}

\address[Jing Yang]{School of Science, Jiangsu University of Science and Technology, Zhenjiang 212003, China}
\email{yyangecho@163.com}

\keywords {Normalized solutions, Multi-peak solutions, Fractional Schr\"odinger equations, Degenerated trapping potential}

\date{\today}


\begin{abstract}
In this work, we investigate the existence and local uniqueness of normalized $k$-peak solutions for the fractional Schr\"odinger equations
with attractive interactions with a class of  degenerated trapping potential with
non-isolated critical points.

Precisely, applying the finite dimensional reduction method, we first obtain the existence of $k$-peak concentrated solutions
and especially describe the relationship between the chemical potential $\mu$ and the attractive interaction $a$.
Second, after precise analysis of the concentrated points and the Lagrange multiplier, we prove the
local uniqueness of the $k$-peak solutions with prescribed $L^2$-norm, by use of the local Pohozaev identities, the blow-up analysis and the maximum principle associated to the nonlocal operator $(-\Delta)^s$.

To our best knowledge, there is few results on the excited normalized solutions of the fractional Schr\"odinger equations before this present work. The main difficulty lies in the non-local property of the operator $(-\Delta)^s$. First, it makes the standard comparison argument in the ODE theory invalid to use in our analysis. Second,
because of the algebraic decay involving the approximate solutions, the estimates, on the Lagrange multiplier for example,  would become more subtle. Moreover,
 when studying the corresponding harmonic extension problem, several local Pohozaev identities are constructed and  we have to
estimate several kinds of integrals that never appear in the classic local Schr\"odinger problems.
In addition, throughout our discussion, we need to distinct the different cases of $p-1<\frac{4s}N$, $p-1=\frac{4s}N$, and $p-1>\frac{4s}N$,
which are called respectively that the mass-subcritical, the mass-critical, and the mass-supercritical case,  due to the mass-constraint condition.
Another difficulty comes from the influence of
the different degenerate rates along different
directions at the critical points of the potential $V(x).$
\end{abstract}

\maketitle

\section{Introduction and main results}
\setcounter{equation}{0}
In this paper, we consider the following  fractional Schr\"odinger equation
\begin{equation}\label{eq1}
\begin{cases}
(-\Delta)^s u+V(x)u= au^{p}+\mu u,  &{x\in \R^N},\\
u\in H^s(\R^N),
\end{cases}
\end{equation}
under the mass constraint
\begin{equation}\label{eqnorm}
\int_{\R^N}u^2(x)dx=1,
\end{equation}
where $s\in(0,1)$, $p\in(1,2_s^*-1)$ with
 $2_s^*=\frac{2N}{N-2s}$ is the fractional critical Sobolev exponent.

The fractional Laplacian operator, appearing in many areas including biological modeling, physics and mathematical finances,
can be regarded as the infinitesimal generator of a stable Levy process \cite{applebaum}. For any $s\in(0,1)$, $(-\Delta)^s$ is the nonlocal operator defined as
\begin{align*}
(-\Delta)^su=c(N,s)P.V.\int_{\R^N}\frac{u(x)-u(y)}{|x-y|^{N+2s}}dy,
\end{align*}
where $P.V.$ is the principal value and $c(N,s)=\pi^{2s+\frac N2}\Gamma(s+\frac N2)/\Gamma(-s)$. One could refer to \cite{chen-li-ma,npv} for more details on the fractional Laplacian operator.
Particularly, this nonlocal operator $(-\Delta)^s $ in $\R^N$ can be expressed as a generalized Dirichlet-to-Neumann map for a certain elliptic boundary value problem
with local differential operators defined on the upper half-space $\R^{N+1}_+=\{(x,t): x\in\R^N,t>0\}$.
Precisely, by \cite{cs1},
for any $u\in\dot H^s(\R^N)$, set
\begin{align*}
\tilde u(x,t)=\mathcal P_s[u]=\int_{\R^N}\mathcal P_s(x-z,t)u(z)dz,\ \ (x,t)\in\R^{N+1}_+,
\end{align*}
where $$\mathcal P_s(x,t)=\beta(N,s)\frac{t^{2s}}{(|x|^2+t^2)^{\frac{N+2s}2}}$$
with a constant $\beta(N,s)$ such that $\displaystyle\int_{\R^N}\mathcal P_s(x,1)dx=1$.
Then $\tilde u\in L^2(t^{1-2s},K)$ for any compact set $K$ in $\overline{\R^{N+1}_+}$, $\nabla\tilde u\in L^2(t^{1-2s},\R^{N+1}_+)$
and $\tilde u\in C^\infty(\R^{N+1}_+)$. Moreover, $\tilde u$ satisfies
\begin{equation*}
\begin{cases}
div(t^{1-2s}\nabla\tilde u)=0,  &{x\in\R^{N+1}_+},\\[1mm]
-\displaystyle\lim_{t\rightarrow0}t^{1-2s}\partial_t\tilde u(x,t)=\omega_s(-\Delta u)^su(x),   &{x\in\R^{N}},
\end{cases}
\end{equation*}
in the distribution sense, where $\omega_s=2^{1-2s}\Gamma(1-s)/\Gamma(s)$. Moreover, it holds that
\begin{align*}
\|\tilde u\|_{L^2(t^{1-2s},\R^{N+1}_+)}=\omega_s\|u\|_{\dot H^s}.
\end{align*}
Without loss of generality, we may assume $\omega_s=1$.

Problems with fractional Laplacian have been extensively studied recently, see for example \cite{bcps,b2cps,bcss,cabresire,ct1,cs1,dpv,fqt1,fl1,fls1,gn1,gln,jlx,ntw1,tan,tx1,silvestre,sv1,weinstein,yyy} and the references therein.
In particular, the existence of multiple spike solution involving the stable critical points of the potential $V(x)$ was considered in \cite{davilapinowei}, where the critical points of $V(x)$ seem to be isolated.

\medskip

We first recall the well-known results about the ground state of the following equation
\begin{equation}\label{eqgs}
(-\Delta)^s u+u= u^{p},\ \ u>0,\ \ x\in \R^N,\ \ u(0)=\max_{x\in\R^N}u(x).
\end{equation}
Let $N\geq1,s\in(0,1)$ and $1<p<2^*_s-1$. Then the following hold (c.f.\cite{fls1,fl1}).\medskip\\
(i)(Uniqueness) The ground state solution $U\in H^s(\R^N)$ of \eqref{eqgs} is unique.\smallskip\\
(ii)(Symmetry, regularity, and decay) $U(x)$ is radial, positive, and strictly decreasing in $|x|$. Moreover, $U\in H^{2s+1}(\R^N)\cap C^\infty(\R^N)$ and
satisfies
\begin{equation*}
\frac{C_1}{1+|x|^{N+2s}}\leq U(x)\leq\frac{C_2}{1+|x|^{N+2s}},\ \ for\,\, x\in \R^N,
\end{equation*}
with some constants $C_2\geq C_1>0$.\smallskip\\
(iii)(Non-degeneracy) The linearized operator $L_0=(-\Delta)^s+1-pU^{p-1}$ is non-degenerate, i.e. its kernel is given by
$$ker L_0=span\big\{\partial_{x_1}U,\partial_{x_2}U,\ldots,\partial_{x_N}U\big\}.$$
Moreover, By Lemma C.2 of \cite{fls1}, for $j=1,\ldots,N$, $\partial_{x_j}U$ has the decay estimate
\begin{equation*}
|\partial_{x_j} U|\leq\frac{C }{1+|x|^{N+2s}}.
\end{equation*}

Throughout this paper, {\bf  we assume $$N\geq\max\{2s,2-2s\}$$} and set
\begin{align}\label{a*a0}
a_*=\int_{\R^N}U^2,\ \ \ a_0\equiv
\begin{cases}
+\infty,\,\, &\text{if}\,\,  p-1<\frac{4s}N,\vspace{0.12cm}\\
(ka_*)^{\frac{p-1}2},\,\,& \text{if} \,\, p-1=\frac{4s}N,\vspace{0.12cm}\\
 0,\,\,& \text{if}\, p-1>\frac{4s}N.
\end{cases}
\end{align}

Our first result is as follows.
\begin{Thm}\label{th1}
 Suppose that $u_a$ satisfies \eqref{eq1}-\eqref{eqnorm} concentrated at some points as $a\rightarrow a_0$. Then it holds that $\mu_a\rightarrow-\infty$ as $a\rightarrow a_0$ .

Moreover, if $p-1\leq\frac{4s}N$, there holds that
\begin{align}\label{equa}
u_a=\Big(\frac{-\mu_a}a\Big)^{\frac1{p-1}}\Big(\sum_{i=1}^kU\big((-\mu_a)^{\frac1{2s}}(x-x_{a,i})\big)+\omega_a\Big),
\end{align}
with $\displaystyle\int_{\R^N}\Big(-\frac1{\mu_a}|(-\Delta)^{\frac s2}\omega_a|^2+\omega_{a}^2\Big)=o\Big((-\mu_a)^{-\frac{N}{2s}}\Big)$.
\end{Thm}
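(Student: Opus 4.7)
\emph{Overall strategy.} The plan is a blow-up analysis at each of the $k$ concentration points $x_{a,i}$. From \eqref{eq1}, the natural length scale is $\var_a:=(-\mu_a)^{-1/(2s)}$ and the amplitude scale is $\beta_a:=\bigl(-\mu_a/a\bigr)^{1/(p-1)}$; a direct substitution shows that the rescaled profile $v_{a,i}(y):=\beta_a^{-1}u_a(\var_a y+x_{a,i})$ satisfies
\begin{align*}
(-\Delta)^{s}v_{a,i}+v_{a,i}+\frac{V(\var_a y+x_{a,i})}{-\mu_a}\,v_{a,i}=v_{a,i}^{p}.
\end{align*}
Once $\mu_a\to-\infty$ is known, the potential term drops out in the limit and, by the uniqueness and non-degeneracy of $U$ recalled before the theorem, each $v_{a,i}$ converges to $U$ in $H^{s}(\R^N)$. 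Translating this convergence back to $u_a$ gives \eqref{equa} together with the stated estimate for $\omega_a$.

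\emph{Step 1: $\mu_a\to-\infty$.} Assume for contradiction that $\mu_a\not\to-\infty$, so along a subsequence $\mu_a\geq -C$ for some $C>0$. The concentration of the mass of $u_a^{2}$ on a set of vanishing measure combined with \eqref{eqnorm} forces $M_a:=\|u_a\|_{L^{\infty}(\R^N)}\to\infty$. Choosing $y_a$ with $u_a(y_a)\geq M_a/2$ (necessarily close to some $x_{a,i}$) and setting $\lambda_a:=M_a^{-(p-1)/(2s)}\to 0$, the renormalization $\hat v_a(y):=M_a^{-1}u_a(\lambda_a y+y_a)$ is bounded by $1$, satisfies $\hat v_a(0)\geq 1/2$, and solves
\begin{align*}
(-\Delta)^{s}\hat v_a+\lambda_a^{2s}V(\lambda_a y+y_a)\hat v_a=a\hat v_a^{p}+\lambda_a^{2s}\mu_a\hat v_a.
\end{align*}
Fractional H\"older regularity applied via the extension problem of \cite{cs1} gives local uniform convergence along a subsequence to a nonnegative bounded $\hat v$ solving $(-\Delta)^{s}\hat v=a\hat v^{p}+c\hat v$ for some $c\geq 0$ (or its analogue under the rescaling $\lambda_a=\mu_a^{-1/(2s)}$ when $\lambda_a^{2s}\mu_a\to+\infty$). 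The subcriticality $p<2_{s}^{*}-1$ together with the fractional Liouville theorem force $\hat v\equiv 0$, contradicting $\hat v(0)\geq 1/2$. Hence $\mu_a\to-\infty$.

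\emph{Step 2: decomposition and error estimate.} With $-\mu_a\to\infty$ in hand, the scales $\var_a,\beta_a$ are well-defined. The change of variables $x=\var_a y+x_{a,i}$, together with the constraint $p-1\leq 4s/N$ under which the factor $\beta_a^{-2}\var_a^{-N}$ balances \eqref{eqnorm}, gives a uniform $L^{2}$-bound on each $v_{a,i}$; bootstrapping through the rescaled equation upgrades this to a uniform $H^{s}$-bound. Concentration-compactness and the uniqueness of the positive ground state \cite{fls1,fl1} then yield $v_{a,i}\to U$ strongly in $H^{s}(\R^N)$. Because the $k$ peaks are distinct, $|x_{a,i}-x_{a,j}|/\var_a\to\infty$ for $i\neq j$, so the polynomial decay $|U(y)|\leq C(1+|y|)^{-(N+2s)}$ makes all pairwise cross terms $o(1)$ in $H^{s}$. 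Defining $\omega_a$ through \eqref{equa} and reverting via $x=\var_a y+x_{a,i}$ turns the strong convergence into
\begin{align*}
\int_{\R^N}\Bigl(-\mu_a^{-1}|(-\Delta)^{s/2}\omega_a|^{2}+\omega_a^{2}\Bigr)\,dx=o\bigl((-\mu_a)^{-N/(2s)}\bigr).
\end{align*}
The principal obstacle throughout is the polynomial (not exponential) decay of $U$: two peaks at rescaled distance $R_a\to\infty$ interact only at order $R_a^{-(N+2s)}$, so the peak-separation argument must be made quantitative. This is especially delicate in the mass-critical case $p-1=4s/N$, where the scales $\var_a$ and $\beta_a$ are tied through $a\to a_0=(ka_*)^{(p-1)/2}$, forcing one to track the mass identity to leading order as an equality rather than as an inequality.
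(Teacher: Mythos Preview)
Your Step 2 is essentially the same blow-up/concentration-compactness argument the paper sketches (invoking \cite{jlx} and the uniqueness of $U$), and it is fine.

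The genuine gap is in Step 1, precisely in the case $\mu_a\to+\infty$. After your rescaling, the limit profile $\hat v\ge 0$, $\hat v(0)\ge 1/2$, solves
\[
(-\Delta)^s\hat v = a\,\hat v^{\,p}+c\,\hat v,\qquad c\ge 0,
\]
(or, under the alternative rescaling, the linear equation $(-\Delta)^s\hat v=\hat v$). The subcritical fractional Liouville theorem you invoke handles only the case $c=0$, i.e.\ the Lane--Emden equation $(-\Delta)^s\hat v=a\hat v^{\,p}$. When $c>0$ the equation is of the form $(-\Delta)^s\hat v=P(x)\hat v$ with $P(x)=c+a\hat v^{\,p-1}(x)\ge c>0$, and ruling out positive solutions here is \emph{not} a Liouville theorem in any standard sense; it is a separate non-existence result. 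This is exactly what the paper isolates as Lemma~2.1 and proves by a sliding/comparison argument against the first Dirichlet eigenfunction of $(-\Delta)^s$ on a large ball. The paper even remarks explicitly (Remark~2.2) that the classical ODE comparison used for $s=1$ fails for the nonlocal operator, and that this sliding argument is the substitute. So the phrase ``fractional Liouville theorem'' in your Step~1 hides the entire content of Section~2.

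Note also that the paper's organization of Step~1 is different from yours: rather than blowing up at a maximum point and passing to a limit equation, it argues directly on the original equation. If $|\mu_a|\le M$, Moser iteration (via \cite{jlx}) gives a uniform $L^\infty$ bound on $u_a$, contradicting concentration. If $\mu_a\to+\infty$, one writes \eqref{eq1} as $(-\Delta)^s u_a=P(x)u_a$ with $P(x)=\mu_a-V(x)+au_a^{p-1}$, observes that concentration forces $P(x)>1$ on any annulus $B_R(0)\setminus B_r(0)$, and applies Lemma~2.1 to obtain a contradiction. Your blow-up route can be made to work, but only after you supply the missing non-existence ingredient for the $c>0$ limit, which amounts to reproving Lemma~2.1.
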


We call $u_a$ a $k$-peak solution of \eqref{eq1}--\eqref{eqnorm} if $u_a$ satisfies \eqref{equa}. 
The problem has been extensively investigated if the critical points of $V(x)$ are isolated, while few is known otherwise.
In this paper, we assume that $V(x)$ obtains its local minimum or local maximum at a closed $N-1$ dimensional hyper-surface $\Gamma_i(i=1,\ldots,k)$, satisfying
$\Gamma_i\cap\Gamma_j=\emptyset$ if $i\neq j$. Precisely, we assume that

\vskip 0.2cm

{\bf (V)}. There exist some $\delta>0$ and some $C^2$ compact hyper surfaces  $\Gamma_i(i=1,\ldots,k)$ without boundary, satisfying
$$V(x)=V_i,\ \frac{\partial V(x)}{\partial \nu_i}=0,\ \frac{\partial^2 V(x)}{\partial \nu_i^2}\neq0,\ for\ any\ x\in\Gamma_i\ and\ i=1,\ldots,k,$$
where $V_i\in\R$, $\nu_i$ is the unit outward normal of $\Gamma_i$ at $x\in\Gamma_i$. Moreover, $V(x)\in C^4(\cup_{i=1}^kW_{\delta,i})$ 
with $W_{\delta,i}:=\{x\in\R^N:dist(x,\Gamma_i)<\delta\}.$

\begin{Rem}
We point out that the assumption (V) was first introduced in \cite{LPY19}, where they
studied the Bose-Einstein condensates. The assumption (V) implies that the potential $V(x)$ obtains its local minimum or local maximum on the hypersurface $\Gamma_i$ for
$i=1,\ldots,k$.
If $\delta>0$ is small, we set $\Gamma_{t,i}=\{x:V(x)=t\}\cap W_{\delta,i}$, which consists of two compact hypersurfaces in $\R^N$ without boundary for
$t\in[V_0,V_0+\sigma)$ or $t\in(V_0-\sigma,V_0]$ provided $\sigma>0$ is small enough. Moreover, the outward unit normal vector $\nu_{t,i}(x)$ and the $j-$th principal tangential unit vector $\tau_{t,i,j}(x),j=1,\ldots,N-1$ of $\Gamma_{t,i}$ at $x$ are Lip-continuous in $W_{\delta,i}$.
\end{Rem}

We can apply the Pohozaev identities to show that a $k$-peak solution of \eqref{eq1}-\eqref{eqnorm} must concentrate at some critical points of $V(x)$.
The following result explains where the concentrated points locate on $\Gamma=\cup_{i=1}^k\Gamma_i$.

\begin{Thm}\label{th2}
Under the condition (V), if $u_a$ is a $k$-peak solution of \eqref{eq1}-\eqref{eqnorm}, concentrating at $\{b_1,b_2,\ldots,b_k\}$ with $b_i\in\Gamma$,
 $b_i\neq b_j$ if $i\neq j$, and $|x_{a,i}-\Gamma_i|=O\big((-\mu_a)^{-\frac1{2s}}\big)$ as $a\rightarrow a_0$, then
\begin{align}\label{necess}
(D_{\tau_{i,j}}\Delta V)(b_i)=0,\ \ with\ i=1,\ldots,k\ and\ j=1,\ldots,N-1,
\end{align}
where $\tau_{i,j}$ is the $j-$th principal tangential unit vector of $\Gamma$ at $b_i$.
\end{Thm}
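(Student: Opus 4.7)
The plan is to derive a local tangential Pohozaev identity on a small half-ball around each $(b_i,0)$ for the Caffarelli--Silvestre extension $\tilde u_a=\mathcal P_s[u_a]$, and match the leading $\beta^2$ term, with $\beta:=(-\mu_a)^{-1/(2s)}\to 0$. Fix $i\in\{1,\dots,k\}$ and $j\in\{1,\dots,N-1\}$, set $\tau=\tau_{i,j}$, and choose $\rho>0$ so small that only $x_{a,i}$ lies inside $B_\rho(b_i)$; in a tubular chart $(y',y_N)$ around $b_i$ with $y_N$ the signed normal distance to $\Gamma_i$, write $y^0=x_{a,i}-b_i$, so the hypothesis reads $y^0_N=O(\beta)$. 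Multiplying $\mathrm{div}(t^{1-2s}\nabla\tilde u_a)=0$ by $\tau\cdot\nabla_x\tilde u_a$ on $B_\rho^+=\{(x,t):|x-b_i|^2+t^2<\rho^2,\,t>0\}$, applying the boundary relation $-\lim_{t\to 0}t^{1-2s}\partial_t\tilde u_a=\omega_s(au_a^p+\mu_au_a-Vu_a)$ on the flat disk $D_\rho=B_\rho(b_i)$, and integrating by parts once more in $x$ on $D_\rho$, one arrives at
\begin{equation*}
\tfrac{\omega_s}{2}\int_{D_\rho}(D_\tau V)\,u_a^2\,dx \;=\; \mathcal H_\rho \,+\, \omega_s\mathcal E_\rho,
\end{equation*}
where $\mathcal H_\rho$ is a hemisphere integral involving $t^{1-2s}|\nabla\tilde u_a|^2(\tau\cdot\vec n_x)$ and $t^{1-2s}(\nabla\tilde u_a\cdot\vec n)(\tau\cdot\nabla_x\tilde u_a)$, and $\mathcal E_\rho=\int_{\partial D_\rho}\big[\tfrac{a}{p+1}u_a^{p+1}+\tfrac{\mu_a}{2}u_a^2-\tfrac{V}{2}u_a^2\big](\tau\cdot n_{D_\rho})\,dS$.

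Assumption (V) yields $V(y)-V_i=y_N^2h(y)$ with $h\in C^2$, whence $(D_\tau V)(y)=y_N^2(D_\tau h)(y)$; the only non-vanishing second derivative of $V$ at $b_i$ is $V_{\nu_i\nu_i}(b_i)=2h(0)=\Delta V(b_i)$, and expanding $V$ to cubic order at $b_i$ in Cartesian coordinates adapted to $\nu_i$ (tangential cubes $V_{jkl}(b_i)$ vanish, $V_{Njk}(b_i)=-2h(0)A_{jk}$ with $A$ the Hessian of the graph defining $\Gamma_i$) gives $V_{NNj}(b_i)=2h_{y_j}(0)$ and hence $2(D_\tau h)(0)=(D_{\tau_{i,j}}\Delta V)(b_i)$. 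Substituting $x=x_{a,i}+\beta z$, using Theorem~\ref{th1}, the normalization $(-\mu_a/a)^{2/(p-1)}\beta^N=(ka_*)^{-1}(1+o(1))$ forced by $\int u_a^2=1$, and killing odd $z$-moments of $U^2$ by radial symmetry, one obtains
\begin{equation*}
\int_{D_\rho}(D_\tau V)\,u_a^2\,dx \;=\; \frac{\beta^2}{2k}\,(D_{\tau_{i,j}}\Delta V)(b_i)\,\Big[\Big(\tfrac{y^0_N}{\beta}\Big)^2 + \tfrac{A_2}{N a_*}\Big] \,+\, o(\beta^2),
\end{equation*}
where $A_2=\int_{\R^N}|z|^2 U^2\,dz>0$; the bracket is bounded and strictly positive, and the $\omega_a$-cross terms are absorbed via the quadratic decay $|D_\tau V|=O(\beta^2)$ on the effective support of $\omega_a$ combined with $\|\omega_a\|_{L^2}^2=o(\beta^N)$.

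It remains to prove $\mathcal H_\rho+\omega_s\mathcal E_\rho=o(\beta^2)$. By $U(z),|\nabla U(z)|\lesssim(1+|z|)^{-(N+2s)}$ (Lemma~C.2 of \cite{fls1}) and the analogous decay for $\tilde U$ and $\nabla\tilde U$, at points at distance $\sim\rho$ from the peak one has $u_a,\tilde u_a\lesssim\beta^{N/2+2s}$ and $\nabla\tilde u_a\lesssim\beta^{N/2+2s}/\rho$, so $\mathcal H_\rho=O(\beta^{N+4s})=o(\beta^2)$ since $N+4s\geq 2+2s>2$. The $u_a^{p+1}$ and $Vu_a^2$ pieces of $\mathcal E_\rho$ are similarly $o(\beta^2)$; the critical piece $\tfrac{\mu_a}{2}u_a^2(\tau\cdot n)$ is of nominal size $\beta^{N+2s}$, which is $o(\beta^2)$ in the strict regime $N+2s>2$, and in the borderline case $N=1,\,s=\tfrac12$ the near-radial symmetry $u_a^2(b_i+\rho\theta)=\tilde f(|\rho\theta-y^0|)=\tilde f(\rho)-\tilde f'(\rho)(\theta\cdot y^0)+O(|y^0|^2)$ together with $\int_{S^{N-1}}\theta_j\,dS=0$ extracts the extra vanishing factor $|y^0_j|\to 0$. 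Dividing the Pohozaev identity by $\beta^2$ and sending $a\to a_0$ along a subsequence with $y^0_N/\beta$ convergent forces $(D_{\tau_{i,j}}\Delta V)(b_i)=0$ for every $i$ and $j$. The main obstacle will be precisely this last step: because the ground state $U$ decays only algebraically (unlike the exponential decay of the classical Schr\"odinger case), naive power-counting places the $\mu_a u_a^2$ boundary piece exactly at the $o(\beta^2)$ threshold, so one must carefully exploit the approximate radial symmetry of $u_a$ around $x_{a,i}$ to close the argument.
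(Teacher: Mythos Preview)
Your strategy matches the paper's: a tangential Pohozaev identity for the extension, then matching the leading interior term. But two steps in your execution are not justified, and the paper handles them differently.

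First, the hemisphere bound $\mathcal H_\rho=O(\beta^{N+4s})$ rests on the claim $|\nabla\tilde u_a|\lesssim\beta^{N/2+2s}/\rho$, which you justify by ``analogous decay for $\tilde U$ and $\nabla\tilde U$''. This is false: the Caffarelli--Silvestre extension $\tilde U$ does \emph{not} inherit the $|z|^{-(N+2s)}$ decay of $U$; along the $t$-axis one has only $\tilde U(0,t)=\int\mathcal P_s(-z,t)U(z)\,dz\sim t^{-N}$. The paper instead works with the rescaled $v_a=(-\mu_a/a)^{-1/(p-1)}u_a$, represents $\tilde v_a=\mathcal P_s*v_a$, and bounds both $\tilde v_a$ and $\nabla\tilde v_a$ on the hemisphere via Lemma~\ref{lemA3} and Remark~\ref{remA2}, obtaining $|\tilde v_a|+|\nabla\tilde v_a|\lesssim\varepsilon^N$ (your $\varepsilon=\beta$). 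In your normalization this yields only $\mathcal H_\rho=O(\beta^N)$, not $O(\beta^{N+4s})$; that still gives $o(\beta^2)$ for $N\ge 3$, but your stated bound and its derivation are incorrect. The ``borderline case $N=1,\,s=\tfrac12$'' is a non-issue anyway: for $N=1$ the index set $\{1,\dots,N-1\}$ is empty and the statement is vacuous.

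Second, your control of the $\omega_a$-cross terms---``$|D_\tau V|=O(\beta^2)$ on the effective support of $\omega_a$''---is not rigorous: $\omega_a$ has no localized support, and $D_\tau V$ is $O(\beta^2)$ only within $O(\beta)$ of $\Gamma_i$, not on all of $D_\rho$. The paper avoids this by centering the ball at $x_{a,i}$ (not $b_i$) and taking $\tau=\tau_{a,i}$ tangent to the level set $\{V=V(x_{a,i})\}$, so that $H(x):=\langle\nabla V(x),\tau_{a,i}\rangle$ satisfies $H(x_{a,i})=0$ \emph{exactly}. One then Taylor-expands $H$ about $x_{a,i}$ and uses the refined error bound $\|\varphi_a\|_a=O(\varepsilon^{N/2+2s+1})$ from Lemma~\ref{lemla}---this is precisely where the hypothesis $d(x_{a,i},\Gamma_i)=O(\varepsilon)$ enters, via $|V(x_{a,i})-V_i|=O(\varepsilon)$---to conclude $\int_{B_\rho(x_{a,i})}H\,U_{\varepsilon,x_{a,i}}^2=O(\varepsilon^{N+2s+2})$. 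Comparing with the direct expansion $\int H\,U_{\varepsilon,x_{a,i}}^2=\tfrac{B}{2}\varepsilon^{N+2}\Delta H(x_{a,i})+O(\varepsilon^{N+4})$ forces $\Delta H(x_{a,i})=O(\varepsilon^{2s})\to 0$, hence $(D_{\tau_{i,j}}\Delta V)(b_i)=0$.
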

\medskip

To study the converse of  Theorem \ref{th2}, we need another non-degenerate condition on the critical point of $V(x)$. In fact, we define that $x_0\in\Gamma_i$ is non-degenerate
on $\Gamma_i$ if there holds that
$$\frac{\partial^2 V(x_0)}{\partial \nu_i^2}\neq0,\ and\ det\left(\Big(\frac{\partial^2\Delta V(x_0)}{\partial\tau_{i,l}\partial\tau_{i,j}}\Big)_{1\leq l,j\leq N-1}\right)\neq0.$$

\begin{Thm}\label{th3}
Assume that the condition (V) holds. If $b_i\in\Gamma$ are non-degenerate critical points of $V(x)$ on $\Gamma$ for $i=1,\ldots,k$ satisfying  \eqref{necess}
and $b_i\neq b_j$ for $i\neq j,$ then \eqref{eq1}--\eqref{eqnorm} has a $k$-peak solution $u_a$ concentrating at $b_1,\ldots,b_k$ as $a\rightarrow a_0$.

\end{Thm}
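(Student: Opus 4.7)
The plan is to implement a Lyapunov--Schmidt reduction in the Caffarelli--Silvestre harmonic extension framework and then extract a $k$-peak normalized solution as a critical point of a finite-dimensional reduced functional whose leading tangential asymptotics are governed by $\Delta V$ at the points $b_i$. Set $\varepsilon = \varepsilon_a = (-\mu_a)^{-1/(2s)}$ and take as an approximate solution the rescaled superposition
\begin{equation*}
W_{\mathbf{x}}(y) = \sum_{i=1}^k U\bigl((y - x_i)/\varepsilon\bigr),
\end{equation*}
where $\mathbf{x} = (x_1,\ldots,x_k)$ is parametrized so that each $x_i$ moves in the normal direction $\nu_i$ to $\Gamma_i$ and in the tangential directions $\tau_{i,j}$ about the prospective concentration point $b_i$, on a scale of order $\varepsilon$. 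Look for the true solution in the form $u_a = \big(-\mu_a/a\big)^{1/(p-1)}(W_{\mathbf{x}} + \omega)$, with $\omega$ orthogonal (in a suitable weighted inner product) to the $Nk$-dimensional kernel spanned by $\partial_{x_j} U\bigl((\cdot - x_i)/\varepsilon\bigr)$. The non-degeneracy (iii) of $L_0$ survives the extension, and a contraction argument in an appropriate weighted space accommodating the algebraic decay $|y|^{-N-2s}$ of $U$ and $\partial_{x_j}U$ produces a unique small remainder $\omega = \omega(\mathbf{x}, a)$.

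Next, substitute $\omega(\mathbf{x}, a)$ back and restrict the variational problem to the finite-dimensional parameter $\mathbf{x}$. Using local Pohozaev identities for the extension problem on half-balls centered at $x_i$, expand the reduced energy
\begin{equation*}
F_a(\mathbf{x}) = c_0 + c_1 \varepsilon^{2s} \sum_{i=1}^k \bigl(V(x_i) - V_i\bigr) + c_2 \varepsilon^{4s} \sum_{i=1}^k \text{(tangential expansion of } \Delta V \text{ at } b_i) + o(\cdot),
\end{equation*}
where the normal-direction variation is controlled by $\partial^2 V/\partial \nu_i^2 \neq 0$ and the tangential variation, once the necessary condition \eqref{necess} kills its first-order term, is controlled by the nondegenerate Hessian $\bigl(\partial^2_{\tau_{i,l} \tau_{i,j}} \Delta V(b_i)\bigr)$. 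Hence the critical-point equations for $F_a$ at leading order become a block-diagonal nondegenerate linear system in the scaled displacements, and an implicit function theorem (or equivalently a local Brouwer-degree) argument yields critical points $\mathbf{x}_a \to (b_1,\ldots,b_k)$.

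Finally, enforce the mass constraint $\int u_a^2 = 1$, which reads $(-\mu_a/a)^{2/(p-1)}(-\mu_a)^{-N/(2s)}\bigl(k a_* + o(1)\bigr) = 1$ and uniquely calibrates $\mu_a$ against $a$ in each of the three regimes $p-1$ less than, equal to, or greater than $4s/N$, with the correct limit $a \to a_0$ in the sense of \eqref{a*a0}. The Lagrange multiplier $\mu_a$ is absorbed into the reduction scheme because the prefactor $(-\mu_a/a)^{1/(p-1)}$ scales out of the equation for $W_{\mathbf{x}} + \omega$, allowing $\mu_a$ and $\mathbf{x}$ to be solved for simultaneously. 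The main obstacle will be the algebraic (rather than exponential) decay of $U$ and its derivatives: the peak-to-peak interactions and the peak-to-potential couplings entering the expansion of $F_a$ must be expanded at precisely the order competing with the $\varepsilon^{4s}$ tangential term, and in the mass-critical case $p-1 = 4s/N$ the leading balance of the constraint equation degenerates, so sub-leading contributions in both $\omega$ and $F_a$ must be retained to pin down the critical point.
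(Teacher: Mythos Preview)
Your overall strategy---Lyapunov--Schmidt reduction plus a finite-dimensional problem whose normal component is resolved by $\partial^2 V/\partial\nu_i^2\neq0$ and whose tangential component is resolved by the non-degeneracy of $(\partial^2_{\tau_{i,l}\tau_{i,j}}\Delta V(b_i))$---is correct and is the same skeleton the paper uses. Two substantive points deserve comment.

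\textbf{A scaling error.} The tangential term in your expansion of $F_a$ appears at the wrong order: you write $\varepsilon^{4s}$, but the contribution of $\Delta V$ enters at order $\varepsilon^{2s+2}$, not $\varepsilon^{4s}$. Indeed, in the rescaled energy one has (cf.\ the paper's expansion for the clustering case)
\[
I\Big(\sum_i U_{\varepsilon,x_i}\Big)= kE_1\varepsilon^N + E_2\,\varepsilon^{N+2s}\sum_i \frac{\partial^2 V(\bar x_i)}{\partial\nu_i^2}\,r_i^2 + E_3\,\varepsilon^{N+2s+2}\sum_i \Delta V(\bar x_i)+\cdots,
\]
so the $\Delta V$ contribution sits two spatial derivatives above the normal term, i.e.\ at $\varepsilon^{2s+2}$. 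Since $s\in(0,1)$, $4s\neq 2s+2$ in general, and your hierarchy of error terms (and hence what must be retained in $\omega$ and in the interactions) would be miscalibrated. The conclusion survives once the exponent is corrected.

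\textbf{Two methodological differences from the paper.} First, the paper does \emph{not} expand a reduced energy and look for its critical points; it instead writes the reduced equations directly as local Pohozaev identities on half-balls (one identity in the normal direction giving $D_{\nu_i}V(z_i)=O(\varepsilon^{2s+\gamma})$, one in each tangential direction giving $(D_{\tau_{i,j}}\Delta)V(z_i)=O(\varepsilon^{2s})$) and solves the resulting algebraic system by the non-degeneracy hypotheses. Your variational route is equivalent but requires finer bookkeeping of all energy contributions up to the order where $\Delta V$ appears. Second, and more important, the paper \emph{decouples} the mass constraint from the reduction: it first constructs, for every large $\lambda$, a $k$-peak solution $w_\lambda$ of the unconstrained problem $(-\Delta)^s w+(\lambda+V)w=w^p$, and only afterwards sets $a_\lambda=(\int w_\lambda^2)^{(p-1)/2}$ and uses the intermediate value theorem in $\lambda$ to hit any prescribed $a$ near $a_0$. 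This sidesteps entirely your proposed simultaneous solve for $(\mu_a,\mathbf{x})$ and in particular avoids the degeneracy you flag in the mass-critical case, since no constraint equation is ever linearized. Your approach can be made to work, but the paper's decoupling is cleaner and shorter.
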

\smallskip

On the other hand,  if we assume the function $\Delta V(x)|_{x\in\Gamma_{i_0}}$ has
an isolated maximum point $b\in\Gamma_{i_0}$ with some $i_0\in\{1,\ldots,k\}$, that is  $\Delta V(x)<\Delta V(b)$
for all $x\in\Gamma_{i_0}\cap\big(B_\delta(b)\setminus b\big)$,
then we can
obtain a $k$-peak solution concentrating at one point.
\begin{Thm}\label{th3'}
Assume  \textup{($V$)} and $\frac{\partial^2V(x)}{\partial\nu_{i_0}^2}\neq0$ for any $x\in\Gamma_{i_0}$ with some $i_0\in\{1,\ldots,k\}$.
If $b\in \Gamma_{i_0}$ is an isolated maximum point of $ \Delta V(x)|_{x\in \Gamma_{i_0}}$ on $\Gamma_{i_0}$, then
  for any integer $k>0$,  problem \eqref{eq1}--\eqref{eqnorm} has a $k$-peak solution $u_a$ concentrating at $b$.
\end{Thm}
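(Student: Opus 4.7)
The plan is to use the finite-dimensional reduction already developed for Theorem \ref{th3}, but specialized to the regime where all $k$ peak centers $x_{a,1},\ldots,x_{a,k}$ lie in a single small neighborhood of $b$ in $W_{\delta,i_0}$. Writing
$$u_a = \Big(\frac{-\mu_a}{a}\Big)^{\frac{1}{p-1}}\Big(\sum_{i=1}^{k}U\bigl((-\mu_a)^{\frac{1}{2s}}(x-x_{a,i})\bigr)+\omega_a\Big),$$
I would first apply the contraction-mapping/Lyapunov--Schmidt argument, via the extension picture in the half-space, to solve for the correction $\omega_a=\omega_a(\boldsymbol x,\mu)$ orthogonal to the kernel directions $\partial_{x_j}U$. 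This yields a reduced functional $F_a(\boldsymbol x)$, and the reduction procedure (combined with the analysis of the Lagrange multiplier already done in Theorem \ref{th1}) reduces the problem to finding a critical configuration $\boldsymbol x=(x_{a,1},\ldots,x_{a,k})$ of $F_a$ lying in a suitable open set of configurations near $b$ with pairwise-distinct centers.

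Next I would compute the leading asymptotics of $F_a$. Decompose each $x_i=b+h_i\nu_{i_0}(b)+t_i$ with $t_i$ tangent to $\Gamma_{i_0}$. Using the fact that $V\equiv V_{i_0}$ on $\Gamma_{i_0}$ (so every purely tangential derivative of $V$ vanishes on $\Gamma_{i_0}$) and condition (V), a Taylor expansion of $V$ on $W_{\delta,i_0}$ yields
$$\sum_{i=1}^{k}V(x_i)=kV_{i_0}+\tfrac{1}{2}V_{\nu\nu}(b)\sum_{i}h_i^2+\tfrac{1}{2N}\sum_{i}\bigl(\Delta V(b+t_i)-V_{\nu\nu}(b)\bigr)\,\|t_i\|^{2}+\cdots.$$
On the other hand, the algebraic decay $U(y)\sim|y|^{-(N+2s)}$ forces the pairwise interactions to contribute a repulsive term of order $(-\mu_a)^{-\frac{N+2s}{2s}}\sum_{i\neq j}|x_i-x_j|^{-(N+2s)}$, evaluated via the same local Pohozaev/integration identities already invoked earlier in the paper. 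Absorbing the Lagrange multiplier and the $L^2$-constraint into a rescaled energy, $F_a$ therefore takes the schematic form
$$F_a(\boldsymbol x)=kE_0(\mu_a)+A_1(-\mu_a)^{-\alpha}\sum_{i}\bigl[V(x_i)-V_{i_0}\bigr]-A_2\sum_{i\neq j}\frac{(-\mu_a)^{-\beta}}{|x_i-x_j|^{N+2s}}+o(\cdot),$$
with $A_1,A_2>0$; the normal direction is pinned by $V_{\nu\nu}(b)\neq 0$, while on $\Gamma_{i_0}$ the competition is between the repulsion of peaks and the fact that $\Delta V$ is strictly maximized at $b$.

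To extract a critical point I would use a max-min construction on the admissible set
$$\Lambda_a=\Big\{\boldsymbol x=(x_1,\ldots,x_k):\,x_i\in \overline{B_{\rho}(b)}\cap W_{\delta,i_0},\ |x_i-x_j|\geq \eta_a\ \text{for }i\neq j\Big\},$$
with $\rho$ small but fixed and $\eta_a\to 0$ at a rate tuned to the $(-\mu_a)^{-1/(2s)}$ scale so that on the boundary $\{|x_i-x_j|=\eta_a\}$ the interaction term strictly dominates, while in the interior the $\Delta V$-term dominates. First minimize $F_a$ over the normal coordinates $h_i$ (where strict convexity follows from $V_{\nu\nu}(b)\neq 0$), and then maximize the resulting function over tangential configurations; because $b$ is an isolated maximum of $\Delta V|_{\Gamma_{i_0}}$, the max is attained in the interior of $\Lambda_a$, producing a genuine critical point of $F_a$ and hence of the original problem, with all peaks concentrating at $b$.

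The main obstacle will be the simultaneous control of the interaction term and the $\Delta V$-term in the collapsing regime. Unlike exponentially-decaying cases, the algebraic decay makes every pairwise interaction compete at the same order as several far-field contributions appearing in the reduction error, so a careful choice of $\eta_a$ and of the configuration scale is required to guarantee that (i) the interior max-min level is strictly above the boundary supremum, ensuring interior attainment, and (ii) all reduction error terms remain of lower order than both $\Delta V(b)-\Delta V(b+t_i)$ and the interaction. The same technical difficulty, together with the distinction of the three regimes $p-1<\frac{4s}{N},\,=\frac{4s}{N},\,>\frac{4s}{N}$ in the scaling of $\mu_a$ versus $a-a_0$ from Theorem \ref{th1}, will dictate the exact exponents $\alpha$, $\beta$, and $\eta_a$.
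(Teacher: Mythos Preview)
Your overall architecture---Lyapunov--Schmidt reduction, energy expansion, variational location of the configuration---matches the paper's, but two substantive points would block the proof as written.

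First, the tangential mechanism in your reduced energy is wrong. Because $V\equiv V_{i_0}$ and $\partial_\nu V=0$ on $\Gamma_{i_0}$, the tangential Hessian of $V$ vanishes identically there (so in fact $\Delta V=\partial^2_\nu V$ on $\Gamma_{i_0}$); hence your term $\tfrac{1}{2N}(\Delta V(b+t_i)-V_{\nu\nu}(b))\|t_i\|^2$ is $O(|t_i|^3)$, and your schematic $F_a$---which keeps only $V(x_i)-V_{i_0}$---has \emph{no} tangential variation once $x_i\in\Gamma_{i_0}$. The genuine dependence on $\Delta V$ enters one order deeper, through the second moment of the potential energy: expanding $\int V(x)U^2_{\eta,x_i}$ about $x_i$ produces $\tfrac{1}{2N}\eta^{N+2}\Delta V(x_i)\int |y|^2U^2$. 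The paper's reduced energy \eqref{I} therefore carries, beyond the normal term $E_2\eta^{N+2s}\partial^2_\nu V(\bar x_j)\,r_j^2$, an additional term $E_3\eta^{N+2s+2}\Delta V(\bar x_j)$ with $E_3>0$, and it is this term (balanced against the interaction $-c_0\eta^N(\eta/|x_m-x_j|)^{N+2s}$) that detects the isolated maximum of $\Delta V|_{\Gamma_{i_0}}$.

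Second, the variational step is not a simple ``minimize over $h_i$, then maximize tangentially''. The hypothesis $\partial^2_\nu V\neq 0$ fixes a sign but not which one, and the two cases behave differently. If $\Gamma_{i_0}$ is a local \emph{maximum} set of $V$ (i.e.\ $\partial^2_\nu V<0$), all three nontrivial terms push toward the interior and a straight maximization of $K$ over $S_{\eta,k}=\{x_j\in\mathcal B,\ |x_j-x_m|>\eta^\sigma\}$ with $\sigma\in(\tfrac12,\tfrac{N-1}{N+2s})$ succeeds. If $\Gamma_{i_0}$ is a local \emph{minimum} set ($\partial^2_\nu V>0$), the pair $(r,\bar x)$ is a genuine saddle for $E_2\partial^2_\nu V\,r^2+E_3\eta^2\Delta V(\bar x)$, and a naive min--max does not automatically yield an interior critical point; the paper instead invokes a topological linking argument as in \cite{DY10,DY11}. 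Your ``strict convexity from $V_{\nu\nu}\neq 0$'' already presupposes the second case, and even there the concluding step requires that linking machinery rather than a direct maximum.
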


\medskip
Another main result of this paper is the following local uniqueness result.

\begin{Thm}\label{th4}

Suppose   \textup{($V$)}, and if further  $N\geq 2s+4$.~
 Let $u_a^{(1)}(x)$ and $u_a^{(2)}(x)$ be two  $k$-peak solutions of \eqref{eq1}--\eqref{eqnorm} concentrating  at $b_1,\cdots,b_k$ with $b_i\in \Gamma$,  and $b_i\ne b_j$ if  $i\ne j$.  If $ b_i$ is non-degenerate, $i=1,\cdots, k$,
 $
\displaystyle\sum^k_{i=1}\Delta V(b_i)\neq 0$ when $p-1=\frac{4s}N$,
and
\[
\Big(\frac{\partial^2 \Delta V(b_i)}{\partial \tau_{i,l} \partial\tau_{i,j}}\Big)_{1\leq l,j\leq N-1}+\frac{\partial  \Delta V(b_i)}{\partial \nu_i}  diag \big(\kappa_{i,1}, \cdots, \kappa_{i,N-1}\big),~\mbox{for}~i=1,\cdots,k
\]
is non-singular, where $\kappa_{i,j}$ is the $j$-th principal curvature of $\Gamma$ at $b_i$ for $j=1,\cdots,N-1$,
then there exists a small positive number $\sigma$,  such that $$u_a^{(1)}(x)\equiv u_a^{(2)}(x)$$ for all $a$ with $0<|a-(ka_*)^{\frac{p-1}2}|\le \sigma$ if  $p-1=\frac{4s}N$, or $0< a \le \sigma $ if $p-1>\frac{4s}N$, or $a\geq\frac1\sigma$ if $p-1<\frac{4s}N$.
\end{Thm}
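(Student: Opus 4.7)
The plan is a contradiction argument based on blow-up analysis of a normalized difference of the two solutions, combined with local Pohozaev identities for the Caffarelli--Silvestre extension. Suppose to the contrary that along some sequence $a_n\to a_0$ one has $u_{a_n}^{(1)}\not\equiv u_{a_n}^{(2)}$, and set
\[
\xi_a(x)=\frac{u_a^{(1)}(x)-u_a^{(2)}(x)}{\|u_a^{(1)}-u_a^{(2)}\|_{L^\infty(\R^N)}},
\]
so that $\|\xi_a\|_{L^\infty}=1$. Subtracting the two equations gives
\[
(-\Delta)^s\xi_a+V(x)\xi_a-aC_a(x)\xi_a-\mu_a^{(1)}\xi_a=\frac{\mu_a^{(1)}-\mu_a^{(2)}}{\|u_a^{(1)}-u_a^{(2)}\|_{L^\infty}}\,u_a^{(2)},
\]
with $C_a(x)=p\int_0^1(tu_a^{(1)}+(1-t)u_a^{(2)})^{p-1}\,dt$, while the $L^2$-constraint yields the orthogonality $\int_{\R^N}\xi_a(u_a^{(1)}+u_a^{(2)})\,dx=0$.

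The first substantive step will be a blow-up analysis of $\xi_a$ near each concentration point $b_i$. Setting
\[
\tilde\xi_{a,i}(y)=\xi_a\bigl(x_{a,i}+(-\mu_a^{(1)})^{-1/(2s)}y\bigr)
\]
and using Theorem~\ref{th1} together with standard regularity for $(-\Delta)^s$, I expect $\tilde\xi_{a,i}$ to converge locally uniformly to some bounded $\xi_i^\infty$ satisfying $L_0\xi_i^\infty=\lambda U$ for a limiting constant $\lambda$. A delicate analysis of the Lagrange multipliers, performed by testing the equations against $u_a^{(j)}$ and $\xi_a$ and exploiting the algebraic decay of $U$ together with the precise concentration rate in Theorem~\ref{th1}, should show that the inhomogeneous term drops in the limit, so $L_0\xi_i^\infty=0$. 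Property~(iii) of $U$ then forces
\[
\xi_i^\infty=\sum_{j=1}^{N}c_{i,j}\,\partial_{x_j}U.
\]

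The second step is to derive a linear system on the coefficients $(c_{i,j})$ from local Pohozaev identities for the extension problem on $\R^{N+1}_+$. For each $i$ and each tangential direction $\tau_{i,l}$ along $\Gamma_i$, I will multiply the equation satisfied by the extended difference $\tilde u_a^{(1)}-\tilde u_a^{(2)}$ (weighted by $t^{1-2s}$) by $\partial_{\tau_{i,l}}(\tilde u_a^{(1)}+\tilde u_a^{(2)})$ and integrate over a small half-cylinder centred at $(b_i,0)$. After Taylor-expanding $V$ to fourth order along $\Gamma_i$ via hypothesis~(V) (the assumption $N\ge 2s+4$ enters here to guarantee convergence of the fourth-moment integrals against $U^2$) and dividing by the correct power of $(-\mu_a)$, the leading contribution should be, up to harmless nonzero factors,
\[
\sum_{l=1}^{N-1}\Bigl[\tfrac{\partial^2\Delta V(b_i)}{\partial\tau_{i,l}\partial\tau_{i,j}}+\delta_{lj}\kappa_{i,j}\tfrac{\partial\Delta V(b_i)}{\partial\nu_i}\Bigr]\,c_{i,l}=0,
\]
which is exactly the matrix assumed non-singular. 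The normal Pohozaev identity together with $(D_\tau\Delta V)(b_i)=0$ from Theorem~\ref{th2} will handle the normal coefficient, and in the mass-critical regime the side relation $\int\xi_a(u_a^{(1)}+u_a^{(2)})=0$, combined with $\sum_i\Delta V(b_i)\neq 0$, will pin down the remaining degree of freedom associated to the Lagrange multiplier. Non-singularity then yields $c_{i,j}=0$ for all $i,j$.

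Once the blow-up limits at every $b_i$ vanish, a further rescaling argument away from the concentration points, together with the maximum principle for $(-\Delta)^s$ (equivalently, for the degenerate elliptic extension), will force $\xi_a\to 0$ uniformly on $\R^N$, contradicting $\|\xi_a\|_{L^\infty}=1$. The principal difficulty will lie in the Pohozaev step: because the extension $\tilde u_a$ only decays algebraically, the boundary integrals on the lateral face and on $\{t=\delta\}$ of the cylinder are not automatically negligible, and a number of cross terms with no analogue in the local Schr\"odinger problem must be estimated sharply, in particular a precise comparison of $\mu_a^{(1)}-\mu_a^{(2)}$ with $\|u_a^{(1)}-u_a^{(2)}\|_{L^\infty}$. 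The three regimes $p-1<4s/N$, $p-1=4s/N$, and $p-1>4s/N$ will enter through the rate at which $\mu_a\to-\infty$ and the size of the prefactor $(-\mu_a/a)^{1/(p-1)}$, and each case will require its own normalization.
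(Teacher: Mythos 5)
Your overall architecture (contradiction via the normalized difference $\xi_a$, blow-up at each peak, local Pohozaev identities for the extension, then a comparison/maximum-principle step) does coincide with the paper's, but there is a genuine gap at the decisive point: you claim that the inhomogeneous term coming from $\mu_a^{(1)}-\mu_a^{(2)}$ ``drops in the limit'', so that $L_0\xi_i^\infty=0$ and hence $\xi_i^\infty\in\mathrm{span}\{\partial_{x_1}U,\dots,\partial_{x_N}U\}$. This is unjustified and, in general, false; it is exactly where the mass constraint changes the linearized problem. In the paper, Lemma \ref{lemcaga} shows that the normalized multiplier difference does \emph{not} vanish but converges (after rescaling) to $-\frac{p-1}{ka_*}U(x)\sum_j\int_{\R^N}U^p\bar\xi_j$, so the blow-up limits solve the coupled nonlocal system \eqref{eqbarxi}, whose solution space (Appendix B) is spanned by the translations \emph{and} the extra function $\Psi_0=U+\frac{p-1}{2s}\,x\cdot\nabla U$ of \eqref{Psi}. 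Since $L_0\Psi_0=-(p-1)U$ and $\int_{\R^N}U^p\Psi_0=\bigl(1-\frac{N(p-1)}{2s(p+1)}\bigr)\int_{\R^N}U^{p+1}\neq0$ for subcritical $p$, the right-hand side vanishes only if the $\Psi_0$-components vanish — which is precisely what must be proved, in \emph{all three} mass regimes, not assumed. The paper devotes Lemma \ref{lemgamma1} to this, using the additional scaling-type local Pohozaev identity \eqref{5.41} attached to the constraint, and the balance there is delicate (the leading term $-\tilde\gamma a_*\gamma_{a,i,0}\delta_a^N$ against $O(\delta_a^{N+2s})$ corrections). Your fallback — the orthogonality $\int_{\R^N}\xi_a(u_a^{(1)}+u_a^{(2)})=0$ together with $\sum_i\Delta V(b_i)\neq0$ ``in the mass-critical regime'' — cannot replace this: that orthogonality degenerates exactly when $p-1=\frac{4s}{N}$, because $\int_{\R^N}U\Psi_0=\bigl(1-\frac{N(p-1)}{4s}\bigr)\int_{\R^N}U^2=0$ there, and in the other regimes it only controls the sum $\sum_i\gamma_{a,i,0}$, not each coefficient separately.

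Two further (smaller) mismatches with what is actually needed. First, you blow up at the scale $(-\mu_a^{(1)})^{-1/(2s)}$; the paper instead first proves Proposition with \eqref{pohdelta}, introducing a parameter $\delta_a$ determined by $a$ alone, with $-\mu_a\delta_a^{2s}=1+\gamma_1\delta_a^{2s}+O(\delta_a^{2s+2})$ and $x_{a,i}-b_i=\bar L_i\delta_a^2+\cdots$. This common scale is what allows the two solutions (with different multipliers and different peak points) to be compared with errors $O(\delta_a^{2s+2})$, and it is here — in defining $\delta_a$ and linking $\mu_a$ to $a$ — that the hypothesis $\sum_i\Delta V(b_i)\neq0$ for $p-1=\frac{4s}{N}$ is used, not where you place it. Second, in the tangential Pohozaev step the curvature term $\frac{\partial\Delta V(b_i)}{\partial\nu_i}\kappa_{i,j}$ does not come only from a fourth-order Taylor expansion of $V$: it arises from the identity \eqref{ab7-19-29} relating mixed second derivatives of $V$ near $\Gamma$ to the principal curvatures, combined with the sharp estimate $x_{a,i}-\bar x_{a,i}=O(\delta_a^2)$ of Lemma \ref{lem5.2}; without that input the announced linear system on the coefficients $c_{i,j}$ (equivalently $\gamma_{a,i,j}$) does not close. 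With these ingredients supplied — the persistence of the multiplier term, the separate elimination of the $\Psi_0$-direction via \eqref{5.41}, and the $\delta_a$-parametrization — your plan becomes the paper's proof; as written, it skips the main new difficulty of the normalized problem.
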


\begin{Rem}
In fact, the result of Theorem \ref{th4} can hold for more general $p$ and $N.$ Here, in order to avoiding
writing too dispersively, we assume that $N\geq2s+4.$
\end{Rem}
As far as we know, there are very few results on the local uniqueness for the fractional Schr\"odinger equations, especially with the subcritical nonlinearities.
In particular, since the Lagrange multiplier $\mu_a$ in \eqref{eq1} depends on the solution $u_a$, the corresponding
linearized operator has changed, which brings  more nontrivial analysis, as mentioned in \cite{LPY19}.
For the
local uniqueness results of peak (or bubbling) solutions, the classical moving plane method is not available.
If $x_0$ is a non-degenerate critical point of $V(x)$, that is,  $ (D^2 V)$ is non-singular at $x_0$, one
can prove the local uniqueness of the peak solution concentrating at $x_0$ either by counting the local degree of the
corresponding reduced finite dimensional  problem as in  \cite{Cao3,CNY,G}, or by using Pohozaev type identities as in  \cite{Cao1,Deng,Grossi,GPY,GLW}.
One of the advantage in applying the Pohazaev identities can be found in dealing with the degenerate case(see \cite{Cao1,Deng,GPY}). In these results, the rate of degeneracy along each direction is the same,  although the critical point $x_0$ is degenerate. Following \cite{LPY19},
under the condition ($V$), the function $V(x)$ is non-degenerate along the normal direction $\nu_i$ of $\Gamma_i$. But along each tangential direction of $\Gamma_i$,
$V(x)$ is degenerate. Such non-uniform degeneracy makes the estimates more sophisticated.

\medskip

At the end of this section, we outline the main idea of the proof and discuss the main difficulties.
For the existence result, we first consider the following problem without constraint,
\begin{equation}\label{eq2}
\begin{cases}
(-\Delta)^s w+\big(\lambda+V(x)\big)w=w^{p},  &{x\in \R^N},\\[1mm]
w\in H^s(\R^N),
\end{cases}
\end{equation}
where $\lambda>0$ is a large parameter. For large $\lambda>0$,  by the standard reduction argument, we could construct various positive solutions concentrating at some stable points of $V(x)$. Particularly, we can construct positive $k-$peak solutions for \eqref{eq2} of the form
$$w_\lambda(x)=\lambda^{\frac1{p-1}}\Big(\sum_{j=1}^k U\big(\lambda^{\frac1{2s}}(x-x_{\lambda,i})\big)+\omega_\lambda\Big),$$
with $\displaystyle\int_{\R^N}\Big(\frac1\lambda|(-\Delta)^{\frac s2}\omega_\lambda|^2+\omega_\lambda^2\Big)=o\Big(\lambda^{-\frac{N}{2s}}\Big)$. 
Let $u_\lambda=\frac{w_\lambda}{\Big(\displaystyle\int_{\R^N}w_\lambda^2\Big)^{\frac12}}$. Then $\displaystyle\int_{\R^N}u_\lambda^2=1$, and
\begin{equation*}
\begin{cases}
(-\Delta)^s u_\lambda+\big(\lambda+V(x)\big)u_\lambda=a_\lambda u_\lambda^{p},  &{x\in \R^N},\\[1mm]
u_\lambda\in H^s(\R^N),
\end{cases}
\end{equation*}
with $$a_\lambda=\Big(\int_{\R^N}w_\lambda^2\Big)^{\frac{p-1}2}=\Big(k\lambda^{\frac2{p-1}-\frac N{2s}}\big(a_*+o(1)\big)\Big)^{\frac{p-1}2}.$$
We note that $a_\lambda>0$. Moreover, as $\lambda\rightarrow+\infty$, $a_\lambda\rightarrow a_0$, which is defined by \eqref{a*a0}.
Therefore, we obtain a concentrated solution with $k$-peaks for \eqref{eq2} with normalized $L^2$-norm, where $\mu=-\lambda$ and some suitable $a_\lambda$.
Hence, we are sufficed to answer a converse question that for any $a>0$  close to $a_0$,
whether one can choose a  suitable large $\lambda=\lambda_a>0$, such that \eqref{eq1}--\eqref{eqnorm} hold with
$\mu=-\lambda_a,\ \ u_a=\frac{w_{\lambda_a}}{\Big(\displaystyle\int_{\R^N}w_{\lambda_a}^2\Big)^{\frac12}},$
finally concluding Theorem \ref{th3}.
In fact, we solve the problem  by discussing
the relationship between $a$ and $\mu$.
\vskip 0.1cm

 Recently, for the classical BECs problem ($s=1,p=3,N=2,3$), a complete description of the solutions $u_a$ concentrating at $a_*$ related to problem was given in \cite{LPY19}. Specifically, in this case if $u_a$ is a solution of problem \eqref{eq1}--\eqref{eqnorm} with $s=1$, $p=3$ and $N=2,3$, which is concentrated at some points as $a\to a_0$ with $a_0\in \R$,  then it holds
\begin{equation*}
a_0 = k a_* > 0 \;\;\text{and}\;\; \mu=\mu_a\to -\infty~\mbox{as}~a\to a_0,
\end{equation*}
where $k$ is the number of peaks of the concentrated solution $u_a$. Moreover, the existence and local uniqueness of this kind concentrated solutions have been proved in \cite{LPY19}.
Unlike the argument used in \cite{LPY19}, a non-existence result, which is of great importance to prove the main result, could not be obtained  by a standard comparison argument in ODE theory.  We would apply a totally different method--the sliding method--corresponding to the non-local operator to show a counterpart result (Theorem \ref{th1}).

On the other hand,
we point out that, due to the non-local property of the operator $(-\Delta)^s$, we cannot construct the local Pohozaev identities directly, which, however,
is inevitable when we carry out the blow-up analysis especially in the proof of the local uniqueness.
To this end, we are supposed to study the corresponding harmonic extension problem (see section 4 and section 5), and have to
estimate several kind of integrals which never appear in the classic local Schr\"odinger problems. Similar arguments can be found in \cite{gnnt} etc..

Last but not least, throughout our discussion, we need
to distinct the different cases of $p-1<\frac{4s}N$, $p-1=\frac{4s}N$, and $p-1>\frac{4s}N$,
which are called respectively that the mass-subcritical, the mass-critical, and the mass-supercritical case.
Especially compared with the problem without any constraint,  in the proof of the local uniqueness result, we have to adopt one more different local Pohozaev identity,
associated with the mass-constraint condition.

\medskip

This paper is organized as follows.  In section~2, we  prove Theorem~\ref{th1} using
the sliding method.  Then in section~3, we estimate the Lagrange multiplier $\mu_a$
in terms of $a$.  The results for the location of the peaks and  for the existence of peak solutions are proved in  section~4, and the local uniqueness of
peak solutions are investigated in section~5.


\medskip

\section{ Proof of Theorem \ref{th1} by a non-existence result}
\begin{Lem}\label{lem2.1}
Assume that $P(x)$ satisfies $P(x)>1$ in $B_R(0)\setminus B_r(0)$ for some fixed $r>0$ and large $R>0$.
Then the problem $$(-\Delta)^su=P(x)u,\ \ u>0,\ \ in\ \R^N$$
has no solution.
\end{Lem}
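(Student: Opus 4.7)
The plan is to argue by contradiction, pairing a hypothetical positive solution $u$ with the first Dirichlet eigenfunction of $(-\Delta)^s$ on the annulus where $P>1$, and extracting a sign contradiction from the nonlocality of $(-\Delta)^s$.

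First, I would fix $R$ so large that the open annulus $\Omega = B_R(0)\setminus\overline{B_r(0)}$ carries the first Dirichlet eigenpair $(\lambda_1,\varphi)$ of $(-\Delta)^s$ with $\lambda_1<1$. Such an $R$ exists because $\Omega$ contains a ball of radius $\tfrac{R-r}{2}$, and by monotonicity of eigenvalues with respect to inclusion together with the scaling $\lambda_1(B_\rho)=\lambda_1(B_1)\,\rho^{-2s}$, one has $\lambda_1(\Omega)\to 0$ as $R\to\infty$. The eigenfunction $\varphi$ is extended by zero outside $\Omega$, so $\varphi\ge 0$ on $\mathbb{R}^N$, $\varphi>0$ precisely on $\Omega$, and $(-\Delta)^s\varphi=\lambda_1\varphi$ on $\Omega$.

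Next I would exploit the symmetry of the bilinear form associated with $(-\Delta)^s$ to write
\[
\int_{\mathbb{R}^N}\varphi\,(-\Delta)^s u\,dx=\int_{\mathbb{R}^N}u\,(-\Delta)^s\varphi\,dx.
\]
Since $\varphi$ is supported in $\Omega$, the left-hand side equals $\int_\Omega P\,u\,\varphi\,dx$. The key nonlocal observation is that although $\varphi\equiv 0$ outside $\Omega$, for any $x\notin\Omega$ we have
\[
(-\Delta)^s\varphi(x)=-c(N,s)\int_{\Omega}\frac{\varphi(y)}{|x-y|^{N+2s}}\,dy<0.
\]
Splitting the right-hand side of the duality identity into the integrals over $\Omega$ and $\mathbb{R}^N\setminus\Omega$, and rearranging, yields
\[
\int_{\Omega}\bigl(P(x)-\lambda_1\bigr)u\,\varphi\,dx=\int_{\mathbb{R}^N\setminus\Omega}u\,(-\Delta)^s\varphi\,dx.
\]
On $\Omega$ we have $P(x)>1>\lambda_1$, while $u,\varphi>0$, so the left-hand side is strictly positive; the right-hand side is strictly negative by the display above, producing the desired contradiction.

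The main technical obstacle I anticipate is justifying the integration-by-parts identity for the pair $(u,\varphi)$, since the lemma places no decay hypothesis on $u$ beyond positivity and solving the equation in $\mathbb{R}^N$. I would address this by approximating $\varphi$ by smooth compactly supported functions and passing to the limit, using that $\varphi$ has Dirichlet data and $(-\Delta)^s\varphi(x)$ decays like $|x|^{-N-2s}$ at infinity, which controls the outside contribution as long as $u$ lies in the natural weighted space $L^1(\mathbb{R}^N,(1+|x|)^{-N-2s}dx)$ in which the nonlocal operator is defined; alternatively, one can lift everything to the Caffarelli--Silvestre extension on $\mathbb{R}^{N+1}_+$ and carry out the same testing with the harmonic extension of $\varphi$, which is how the remainder of the paper handles nonlocal integration by parts.
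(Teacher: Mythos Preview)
Your argument is correct and constitutes a genuinely different route from the paper's. Both proofs start from the same observation---choose a domain inside the annulus large enough that the first Dirichlet eigenvalue of $(-\Delta)^s$ drops below $1$---but they diverge from there. The paper uses a \emph{touching/sliding} argument: with $\phi$ the eigenfunction on a large ball $B\subset B_R\setminus B_r$, it sets $w=\gamma u$ with $\gamma=\max_B(\phi/u)$, so that $w\ge\phi$ everywhere with equality at the touching point $x_0\in B$; then $(-\Delta)^s(w-\phi)\ge 0$ in $B$, $w-\phi\ge 0$ in $B^c$ and $w-\phi\not\equiv 0$, so the strong maximum principle for $(-\Delta)^s$ forces $w-\phi>0$ in $B$, contradicting $w(x_0)=\phi(x_0)$. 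Your approach instead tests the equation against $\varphi$ and exploits the strictly negative sign of $(-\Delta)^s\varphi$ outside its support, yielding an immediate sign contradiction in one line.

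The trade-off is exactly the one you identify. The paper's pointwise comparison sidesteps any integration-by-parts justification and requires only that $(-\Delta)^s u$ be defined pointwise (so $u\in L^1\big((1+|x|)^{-N-2s}\,dx\big)$, which is already implicit in the equation) together with the strong maximum principle. Your argument is computationally lighter and perhaps more transparent, but it does hinge on the self-adjointness identity $\int\varphi\,(-\Delta)^s u=\int u\,(-\Delta)^s\varphi$, which needs precisely the growth control you mention; once $u$ is in that weighted space the identity follows by a standard approximation since $(-\Delta)^s\varphi$ decays like $|x|^{-N-2s}$. In the intended application (Theorem~\ref{th1}) $u$ is a concentrated $H^s$ solution, so this is harmless, but the paper's formulation of the lemma does not state any such hypothesis, which makes the sliding proof marginally more self-contained.
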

\begin{proof}
We take $\lambda_1$ as the first eigenvalue of $(-\Delta)^s$
and $\phi$ as the associated eigenfunction satisfying
\begin{align}\begin{cases}
(-\Delta)^s\phi=\lambda_1\phi,& x\in\ B,\nonumber\\[1mm]
 \phi=0,&x\in \ B^c,
\end{cases}
\end{align}
where $B\subset B_R(0)\setminus B_r(0)$ is a large ball. Since $R>0$ is large enough, it may hold that $$\lambda_1\leq1<P(x)~~\mbox{in}~~B.$$

Now, we set $w(x):=\displaystyle\max_{x\in B}\frac{\phi(x)}{u(x)}\cdot u(x)$. Obviously, $w(x)\geq\phi(x)$  and $w(x_0)=\phi(x_0)$ with $x_0\in B$ the maximal point of $\frac{\phi(x)}{u(x)}$.  To get a contradiction, we are sufficed to prove that $w(x)>\phi(x)$ by use of the maximum  principle.

In fact, if we denote $\gamma=\displaystyle\max_{x\in B}\frac{\phi(x)}{u(x)}$, then there holds that
\begin{align*}
(-\Delta)^sw=\gamma(-\Delta)^su=\gamma P(x)u\geq \gamma\lambda_1u=\lambda_1\max_{x\in B}\frac{\phi(x)}{u(x)}u\geq\lambda_1\phi=(-\Delta)^s\phi,
\end{align*}
which gives that $$(-\Delta)^sw\geq(-\Delta)^s\phi~\,~\mbox{in}~~B.$$ On the other hand, $w\geq\phi$ in $B^c$ obviously. Moreover, since here $w\not\equiv\phi$, the strong maximum principle implies that
$w>\phi$ concluding the Lemma by contradiction.
\end{proof}

\begin{Rem}
For the classical BECs problem when $s=1,p=3$ and $N=2,3$ in \cite{LPY19}, the standard comparison method from the ODE theory is sufficed to show such a non-existence
result, which, however,  is invalid in the fractional case. We take the sliding method to overcome this obstacle, which, developed by Chen, Li and Zhu \cite{chenlizhu},  would be typically available in the study of some other similar problems when dealing with the fractional operator.

\end{Rem}
\smallskip

\begin{proof}[\textbf{Proof of Theorem \ref{th1}}]~We divide the proofs into following three steps.
\vskip 0.2cm

\noindent Step 1.  We first show $\mu_a\rightarrow-\infty$. By contradiction, we first suppose $|\mu_a|\leq M$. Since $\displaystyle\int_{\R^N}u_a^2=1$,
 we could use the argument in \cite{jlx}
to get $u_a$ is uniformly bounded, which means
that $u_a$ does not blow up. Then if $\mu_a\rightarrow+\infty$, we set $P(x)=\mu_a-V(x)+au_a^{p-1}$. Since $u_a$ concentrates at some
points, we may assume $$au_a^{p-1}>-1~~\mbox{in}~~\R^N\setminus B_r(0)~~\mbox{for some}~~r>0.$$ Hence, for any $R>0$, we have $$P(x)>1~~\mbox{for}~~x\in B_R(0)\setminus B_r(0),$$ which gives a contradiction by Lemma \ref{lem2.1}.

\vskip 0.2cm

\noindent Step 2. It holds that $a>0$. If not, we could obtain then $$(-\Delta)^su_a\leq0.$$ By
the maximum principle corresponding to the fractional operator \cite{jlx}, we find $u_a\leq 0$, which is a contradiction.

\vskip 0.2cm

\noindent Step 3. In the case of $p-1\leq \frac{4s}{N}$. Denote $v_a(x)=(-\mu_a)^{-\frac 1{p-1}}u_a\big((-\mu_a)^{-\frac 1{2s}}x\big)$. Then
\begin{align*}
(-\Delta)^sv_a+\Big(1+\frac1{-\mu_a}V\big((-\mu_a)^{-\frac 1{2s}}x\big)\Big)v_a=av_a^p
\end{align*}
and $\displaystyle\int_{\R^N}v_a^2=(-\mu_a)^{\frac N{2s}-\frac2{p-1}}$.
Therefore, we apply the Moser iteration to get $v_a$ is bounded uniformly in $a$.
Let $y_a$ be a maximum point of $v_a$. Then we could show that $a$ is bounded from below. Applying the   blow-up argument established by \cite{jlx}, we obtain that
there exists some integer $k>0$ such that
$$
v_a=\sum_{i=1}^kU_{a_{0}}(x-y_{a,i})+\bar \omega_a,
$$
for some $y_{a,i}\in\R^N$ with $|y_{a,i}-y_{a,j}|\rightarrow+\infty$ for $i\neq j$, where $U_{a_{0}}$ is the unique positive solution of
\begin{equation*}
\begin{cases}
(-\Delta)^sU_{a_{0}}+U_{a_{0}}=a_{0}U_{a_{0}}^p,\\[2mm]
 U_{a_{0}}(0)=\displaystyle\max_{x\in\R^N}U_{a_{0}}(x),
 \end{cases}
\end{equation*}
and
 $$\displaystyle\int_{\R^N}\big(|(-\Delta)^{\frac s2}\bar \omega_a|^2+\bar \omega_a^2\big)=o(1)$$
Since $\displaystyle\int_{\R^N}u_a^2=1$,  we have $$\int_{\R^N}v_a^2=(-\mu_a)^{\frac N{2s}-\frac2{p-1}}.$$ In view of $U_a=a_{0}^{-\frac1{p-1}}U$, there holds that
$$(-\mu_a)^{\frac N{2s}-\frac2{p-1}}=ka_0^{-\frac 2{p-1}}a_*+o(1),$$ implying the conclusion.

\end{proof}

\section{Some estimates by Pohozaev identities}

We use Pohozaev identities  to estimate $\mu_a$ with respect to $a$.
Let $\varepsilon=(-\mu_a)^{-\frac1{2s}}$ and $v_a=(-\frac{\mu_a}{a})^{-\frac1{p-1}}u_a$.
Then \eqref{eq1} turns to
\begin{align}\label{eqv}
\varepsilon^{2s}(-\Delta)^sv_a+\big(1+\varepsilon^{2s}V(x)\big)v_a=v_a^p,\,\,\,\,v_{a}\in H^{s}(\R^{N}).
\end{align}
For any $a>0$, we define the norm $\|v\|_a=\Big(\displaystyle\int_{\R^N}\varepsilon^{2s}|(-\Delta)^{\frac s2}v|^2+v^2\Big)^{\frac12}$.

Hence, a $k$-peak solution of \eqref{eqv} has the form of
$$v_a(x)=\sum_{i=1}^kU_{\varepsilon,x_{a,i}}(x)+\varphi_a(x),$$
where $$U_{\varepsilon,x_{a,i}}(x)=(1+\varepsilon^{2s}V_i)^{\frac1{p-1}}
U\Big(\frac{(1+\varepsilon^{2s}V_i)^{\frac1{2s}}(x-x_{a,i})}{\varepsilon}\Big),\ \
\|\varphi_a\|_a=o(\varepsilon^{\frac N2}).$$
We write then
\begin{align*}
\varepsilon^{2s}(-\Delta)^s\varphi_a+\Big(\big(1+\varepsilon^{2s}V(x)\big)-p\sum_{i=1}^kU_{\varepsilon,x_{a,i}}^{p-1}\Big)\varphi_a=N_a(\varphi_a)+l_a(x),
\end{align*}
where
\begin{align*}
l_a(x)=-\varepsilon^{2s}\sum_{i=1}^k\big(V(x)-V_i\big)U_{\varepsilon,x_{a,i}}
+\Big(\sum_{i=1}^kU_{\varepsilon,x_{a,i}}\Big)^p-\sum_{i=1}^kU_{\varepsilon,x_{a,i}}^p,
\end{align*}
and
\begin{align}\label{Na}
N_a(\varphi_a)=\Big(\sum_{i=1}^kU_{\varepsilon,x_{a,i}}+\varphi_a\Big)^p-\Big(\sum_{i=1}^kU_{\varepsilon,x_{a,i}}\Big)^p-p\sum_{i=1}^kU_{\varepsilon,x_{a,i}}^{p-1}\varphi_a.
\end{align}
Set
\begin{align*}
E_{a,x_{a,i}}:=\Big\{u\in H^s(\R^N):\langle u,\frac{\partial U_{\varepsilon,x_{a,i}}}{\partial x_j}\rangle_a=0,j=1,\ldots, N\Big\}.
\end{align*}
One may move $x_{a,i}$ such that the perturbing term $\varphi_a\in\displaystyle\bigcap_{i=1}^k E_{a,x_{a,i}}$.

\vskip 0.1cm

Let $L_a$ be the bounded linear operator from $H^s(\R^N)$ to itself, with the form of
\begin{align}\label{La}
\langle L_a u,v\rangle_a=\int_{\R^N}\varepsilon^{2s}(-\Delta)^{\frac s2}u(-\Delta)^{\frac s2}v+\big(1+\varepsilon^{2s}V(x)\big)uv-p\sum_{i=1}^kU_{\varepsilon,x_{a,i}}^{p-1}uv.
\end{align}
It is standard to prove the following result.
\begin{Lem}
There exists some constant $c>0$, such that for all $a$ around $a_0$, it holds then
$$\|L_au\|_a\geq c\|u\|_a,\ \ for\ all\ u\in\bigcap_{i=1}^k E_{a,x_{a,i}}.$$

\end{Lem}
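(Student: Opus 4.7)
The plan is to proceed by contradiction in the standard Lyapunov--Schmidt fashion. Suppose the conclusion fails; then there exist sequences $a_n\to a_0$ (with associated $\varepsilon_n\to 0$), peak locations $x_{a_n,i}$ with $|x_{a_n,i}-x_{a_n,j}|/\varepsilon_n\to\infty$ for $i\neq j$, and $u_n\in\bigcap_{i=1}^k E_{a_n,x_{a_n,i}}$ satisfying $\|u_n\|_{a_n}=1$ yet $\|L_{a_n}u_n\|_{a_n}=o(1)$. The goal is to derive $\|u_n\|_{a_n}\to 0$, a contradiction.

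The first step is a blow-up around each peak. For each $i$ set $\tilde u_{n,i}(y):=u_n(\varepsilon_n y+x_{a_n,i})$. A change of variables shows that $\tilde u_{n,i}$ is uniformly bounded in $H^s(\R^N)$, so up to a subsequence $\tilde u_{n,i}\rightharpoonup u_i^*$ weakly in $H^s$ and strongly in $L^q_{\mathrm{loc}}$ for subcritical $q$. Testing $L_{a_n}u_n$ against rescaled Schwartz functions centered at $x_{a_n,i}$, and using that $U_{\varepsilon_n,x_{a_n,j}}(\varepsilon_n y+x_{a_n,i})\to 0$ locally when $j\neq i$ while $U_{\varepsilon_n,x_{a_n,i}}(\varepsilon_n y+x_{a_n,i})\to U(y)$, the limiting equation is
\begin{equation*}
(-\Delta)^s u_i^*+u_i^*-pU^{p-1}u_i^*=0\quad\text{in }\R^N.
\end{equation*}
The non-degeneracy property (iii) from the introduction then forces $u_i^*\in\mathrm{span}\{\partial_{x_1}U,\ldots,\partial_{x_N}U\}$. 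Rescaling the orthogonality relations $\langle u_n,\partial_{x_j}U_{\varepsilon_n,x_{a_n,i}}\rangle_{a_n}=0$ and passing to the limit yields $\int_{\R^N}((-\Delta)^{s/2}u_i^*(-\Delta)^{s/2}\partial_{x_j}U+u_i^*\partial_{x_j}U)=0$ for every $j$, which combined with the previous display gives $u_i^*\equiv 0$ for each $i$.

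Finally I would upgrade this concentration statement to a norm estimate. Fix a large $R$ and smooth cutoffs $\chi_{n,i}$ supported in $B_{R\varepsilon_n}(x_{a_n,i})$, equal to one on $B_{R\varepsilon_n/2}(x_{a_n,i})$. The blow-up analysis above yields $\sum_{i=1}^k\|\chi_{n,i}u_n\|_{a_n}=o_n(1)+o_R(1)$. For the far-field part $u_n^{\mathrm{far}}:=u_n-\sum_i\chi_{n,i}u_n$, testing $\langle L_{a_n}u_n,u_n^{\mathrm{far}}\rangle_{a_n}$ against $u_n^{\mathrm{far}}$ and using that $p\sum_i U_{\varepsilon_n,x_{a_n,i}}^{p-1}\leq \tfrac12$ on the support of $u_n^{\mathrm{far}}$ for $R$ large enough (by the pointwise decay of $U$) gives coercivity $\|u_n^{\mathrm{far}}\|_{a_n}^2\leq C(\|L_{a_n}u_n\|_{a_n}\|u_n^{\mathrm{far}}\|_{a_n}+o_n(1))$. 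Summing, $\|u_n\|_{a_n}=o(1)$, contradicting the normalization. The main technical obstacle will be the bookkeeping of nonlocal cross terms produced by the cutoffs: because $U$ decays only algebraically rather than exponentially (a recurring difficulty in the fractional setting, as emphasized by the authors), one must track the interaction integrals $\int\int \chi_{n,i}(x)(1-\chi_{n,i}(y))u_n(x)u_n(y)/|x-y|^{N+2s}\,dxdy$ carefully and absorb them either into the coercive quadratic form on the peak regions or into $o_n(1)$ using the separation $|x_{a_n,i}-x_{a_n,j}|/\varepsilon_n\to\infty$.
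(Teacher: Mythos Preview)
Your contradiction/blow-up argument is exactly the standard proof the paper alludes to when it says the result ``is standard'' and omits details; the approach via rescaling around each peak, non-degeneracy of $L_0$, passing the orthogonality conditions to the limit, and far-field coercivity is the same. One small fix: under the normalization $\|u_n\|_{a_n}=1$ the rescaling $\tilde u_{n,i}(y)=u_n(\varepsilon_n y+x_{a_n,i})$ satisfies $\|\tilde u_{n,i}\|_{H^s}^2=\varepsilon_n^{-N}\|u_n\|_{a_n}^2=\varepsilon_n^{-N}$, which is not bounded; normalize instead by $\|u_n\|_{a_n}=\varepsilon_n^{N/2}$ (equivalently, study $\varepsilon_n^{N/2}\tilde u_{n,i}$), after which the rest of your argument goes through verbatim.
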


\begin{Lem}\label{lemla}
A $k$-peak solution $v_a$ of \eqref{eqv} concentrating at $b_1,\ldots,b_k$ has the following form
\begin{align*}
v_a(x)=\sum_{i=1}^kU_{\varepsilon,x_{a,i}}(x)+\varphi_a(x),
\end{align*}
with $\varphi_a\in \bigcap_{i=1}^k E_{a,x_{a,i}}$ and
\begin{align}\label{phia}
\|\varphi_a\|_a=&O\Big(|\sum_{i=1}^k(V(x_{a,i})-V_i)|\varepsilon^{\frac N2+2s}+|\sum_{i=1}^k\nabla V(x_{a,i})|\varepsilon^{\frac N2+2s+1}+\varepsilon^{\frac N2+2s+2}\Big)\nonumber\\
&+\begin{cases}O\Big(\varepsilon^{\frac N2+\frac p2(N+2s)}\Big), \,&\text{if} \,\,1<p\leq2,\\[2mm] O\Big(\varepsilon^{\frac N2+ (N+2s)}\Big),\,&\text{if}\,\, p>2.
\end{cases}
\end{align}

\end{Lem}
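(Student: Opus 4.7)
The plan is to implement the standard Lyapunov--Schmidt reduction for \eqref{eqv}. Starting from the ansatz $v_a=\sum_{i=1}^k U_{\varepsilon,x_{a,i}}+\varphi_a$, substitution into \eqref{eqv} produces
$$L_a\varphi_a=N_a(\varphi_a)+l_a(x).$$
I would decompose $H^s(\R^N)$ into $\bigcap_{i=1}^k E_{a,x_{a,i}}$ and its $\langle\cdot,\cdot\rangle_a$-orthogonal complement, the finite-dimensional span of $\{\partial_{x_j}U_{\varepsilon,x_{a,i}}\}$. The parameters $x_{a,i}$ are chosen by a standard modulation argument to place $\varphi_a$ in $\bigcap_i E_{a,x_{a,i}}$, where the preceding lemma provides $\|L_a u\|_a\gtrsim\|u\|_a$, and hence
$$\|\varphi_a\|_a\le C\|l_a\|_a+C\|N_a(\varphi_a)\|_a.$$
The proof of \eqref{phia} then reduces to bounding $\|l_a\|_a$ and showing $\|N_a(\varphi_a)\|_a$ is of higher order.

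For the potential piece $-\varepsilon^{2s}\sum_i(V(x)-V_i)U_{\varepsilon,x_{a,i}}$ of $l_a$, I would Taylor expand
$$V(x)-V_i=(V(x_{a,i})-V_i)+\nabla V(x_{a,i})\cdot(x-x_{a,i})+O(|x-x_{a,i}|^2)$$
inside the neighbourhood $W_{\delta,i}$ where $V\in C^4$, and use the algebraic decay of $U_{\varepsilon,x_{a,i}}$ outside. After the rescaling $y=(x-x_{a,i})/\varepsilon$, the three Taylor terms contribute $|V(x_{a,i})-V_i|\varepsilon^{N/2+2s}$, $|\nabla V(x_{a,i})|\varepsilon^{N/2+2s+1}$, and $\varepsilon^{N/2+2s+2}$ respectively to $\|l_a\|_a$, giving the first line of \eqref{phia}.

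The interaction piece $\bigl(\sum_i U_{\varepsilon,x_{a,i}}\bigr)^p-\sum_i U_{\varepsilon,x_{a,i}}^p$ is the main obstacle and is precisely where the fractional setting departs from the classical one. I would apply the pointwise inequalities
$$\Bigl|\Bigl(\sum_i a_i\Bigr)^p-\sum_i a_i^p\Bigr|\lesssim\begin{cases}\sum_{i\ne j}(a_ia_j)^{p/2},&1<p\le 2,\\[1mm]\sum_{i\ne j}a_i^{p-1}a_j,&p>2,\end{cases}$$
which in either case reduce the integrand to one strongly localised factor near some $x_{a,i}$ multiplied by a slowly decaying tail of a distant bump. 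Because $U(x)\sim|x|^{-(N+2s)}$, evaluating that tail near $x_{a,i}$ produces only $O(\varepsilon^{N+2s})$, polynomial in $\varepsilon$ rather than the exponential smallness available in the local Schr\"odinger problem. Rescaling and taking the $\|\cdot\|_a$-norm then yields $\varepsilon^{N/2+p(N+2s)/2}$ when $1<p\le 2$ and $\varepsilon^{N/2+N+2s}$ when $p>2$, matching the second part of \eqref{phia}. Finally, $N_a(\varphi_a)$ is superlinear in $\varphi_a$, with a standard pointwise bound $|N_a(\varphi_a)|\le C|\varphi_a|^p$ for $1<p\le 2$ and $|N_a(\varphi_a)|\le C\bigl((\sum_i U_{\varepsilon,x_{a,i}})^{p-2}\varphi_a^2+|\varphi_a|^p\bigr)$ for $p>2$, so $\|N_a(\varphi_a)\|_a=o(\|\varphi_a\|_a)$ on a small ball, which both closes the contraction and leaves $\|l_a\|_a$ as the dominant contribution.
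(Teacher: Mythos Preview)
Your proposal is correct and follows essentially the same route as the paper: the paper's proof likewise reduces everything to bounding $\langle l_a,v\rangle_a$ by splitting into the potential piece (Taylor-expanded exactly as you describe) and the interaction piece (handled via the same pointwise inequalities and the algebraic decay $U(x)\sim|x|^{-(N+2s)}$, invoking Lemma~\ref{lemA1} to get the $\varepsilon^{N+2s}$ and $\varepsilon^{\frac p2(N+2s)}$ factors). The paper only spells out the $l_a$ estimate and leaves the coercivity step, the modulation of $x_{a,i}$, and the superlinearity of $N_a(\varphi_a)$ implicit, whereas you have outlined the full Lyapunov--Schmidt architecture; apart from that difference in level of detail, the two arguments coincide.
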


\begin{proof}
By calculation, we have for any $v\in H^s(\R^N)$,
\begin{align*}
\langle l_a(x),v\rangle_a =&-\varepsilon^{2s}\sum_{i=1}^k\int_{\R^N}
\big(V(x)-V_i\big)U_{\varepsilon,x_{a,i}}v+\int_{\R^N}\big(
(\sum_{i=1}^kU_{\varepsilon,x_{a,i}})^p-\sum_{i=1}^kU_{\varepsilon,x_{a,i}}^p\big)v\\
 =&-\varepsilon^{2s}\sum_{i=1}^k\int_{\R^N}
\big(V(x)-V(x_{a,i})\big)U_{\varepsilon,x_{a,i}}v-\varepsilon^{2s}\sum_{i=1}^k\int_{\R^N}
\big(V(x_{a,i})-V_i\big)U_{\varepsilon,x_{a,i}}v\\
&+ \int_{\R^N}\big((\sum_{i=1}^kU_{\varepsilon,x_{a,i}})^p-\sum_{i=1}^kU_{\varepsilon,x_{a,i}}^p\big)v:=-A_1-A_2+A_3.
\end{align*}
We estimate $A_1$ as follows
\begin{align*}
A_1=& \varepsilon^{2s}\sum_{i=1}^k\int_{\R^N}
\big(V(x)-V(x_{a,i})\big)U_{\varepsilon,x_{a,i}}v\\
\leq& \varepsilon^{2s}\sum_{i=1}^k\int_{B_{\delta}(x_{a,i})}
\big(\nabla V(x_{a,i})\cdot(x-x_{a,i})+O(|x-x_{a,i}|^2)\big)U_{\varepsilon,x_{a,i}}v\\&+
C\varepsilon^{2s}\Big(\sum_{i=1}^k\int_{B^c_{\delta}(x_{a,i})}U^2_{\varepsilon,x_{a,i}}\Big)^{\frac12}\|v\|_{L^2}\\
\leq &C\varepsilon^{2s+\frac{N}{2}+1}|\sum_{i=1}^k\nabla V(x_{a,i})|\cdot\|v\|_{a}+\varepsilon^{2s+\frac{N}{2}+2}\|v\|_{a}.
\end{align*}
Similarly,
\begin{align*}
A_2=& \varepsilon^{2s}\sum_{i=1}^k\int_{\R^N}
\big(V(x_{a,i})-V_i\big)U_{\varepsilon,x_{a,i}}v\leq C\varepsilon^{2s+\frac{N}{2}}\sum_{i=1}^k|V(x_{a,i})-V_i|\cdot\|v\|_{a}.
\end{align*}
Finally, since
\begin{align*}
|A_3|=&\Big|\int_{\R^N}\big((\sum_{i=1}^kU_{\varepsilon,x_{a,i}})^p-\sum_{i=1}^kU_{\varepsilon,x_{a,i}}^p\big)v\Big|
=\begin{cases}O\Big(\displaystyle\int_{\R^N}\sum_{i\neq j}U^{p-1}_{\varepsilon,x_{a,i}}U_{\varepsilon,x_{a,j}}v\Big),&\text{if}\,\,p>2,\vspace{0.12cm}\\
O\Big(\displaystyle\int_{\R^N}\sum_{i\neq j}U^{\frac p2}_{\varepsilon,x_{a,i}}U^{\frac p2}_{\varepsilon,x_{a,j}}v\Big),&\text{if}\,\,1<p\leq2,
\end{cases}
\end{align*}
then by Lemma \ref{lemA1}, we compute directly that if $p>2$
\begin{align*}
&\Big|\int_{\R^N}\sum_{i\neq j}U^{p-1}_{\varepsilon,x_{a,i}}U_{\varepsilon,x_{a,j}}v\Big|
\leq C\Big(\int_{\R^N}\sum_{i\neq j}U^{2(p-1)}_{\varepsilon,x_{a,i}}U^2_{\varepsilon,x_{a,j}}\Big)^{\frac12}\|v\|_{a}\\
&\leq C\varepsilon^{\frac N2}\sum_{i\neq j}\Big(\int_{\R^N}\frac{1}{(1+|y-\frac{x_{a,j}}{\varepsilon}|)^{2(N+2s)(p-1)}} \frac{1}{(1+|y-\frac{x_{a,i}}{\varepsilon}|)^{2(N+2s)}}\Big)^{\frac12}\|v\|_{a}\\
&\leq C\varepsilon^{\frac N2}\sum_{i\neq j}\frac{1}{| \frac{x_{a,j}-x_{a,i}}{\varepsilon}|^{N+2s}}\Big(\int_{\R^N}\frac{1}{(1+|y-\frac{x_{a,j}}{\varepsilon}|)^{2(N+2s)(p-1)}}+ \frac{1}{(1+| y-\frac{x_{a,i}}{\varepsilon}|)^{2(N+2s)(p-1)}}\Big)^{\frac12}\|v\|_{a}\\
&\leq C\varepsilon^{\frac N2+N+2s}\|v\|_{a};
\end{align*}
while for $1<p\leq 2$,
\begin{align*}
&\Big|\int_{\R^N}\sum_{i\neq j}U^{\frac p2}_{\varepsilon,x_{a,i}}U^{\frac p2}_{\varepsilon,x_{a,j}}v\Big|
\leq C\Big(\int_{\R^N}\sum_{i\neq j}U^{p}_{\varepsilon,x_{a,i}}U^p_{\varepsilon,x_{a,j}}\Big)^{\frac12}\|v\|_{a}\\
&\leq C\varepsilon^{\frac N2}\sum_{i\neq j}\Big(\int_{\R^N}\frac{1}{(1+|y-\frac{x_{a,j}}{\varepsilon}|)^{p(N+2s) }} \frac{1}{(1+|y-\frac{x_{a,i}}{\varepsilon}|)^{ p(N+2s)}}\Big)^{\frac12}\|v\|_{a}\\
&\leq C\varepsilon^{\frac N2}\sum_{i\neq j}\frac{1}{| \frac{x_{a,j}-x_{a,i}}{\varepsilon}|^{\frac p2(N+2s)}}\|v\|_{a}\leq C\varepsilon^{\frac N2+\frac p2(N+2s)}\|v\|_{a}.
\end{align*}

To sum up, the result has been proved.

\end{proof}

\begin{Lem}
It holds that
\begin{align*}
\int_{\R^N}U^2=\Big(\frac N{s(p+1)}-\frac N{2s}+1\Big)\int_{\R^N}U^{p+1}.
\end{align*}
\end{Lem}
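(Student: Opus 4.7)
The plan is to derive the identity as a direct consequence of the fractional Pohozaev identity applied to the ground state equation $(-\Delta)^s U+U=U^p$. Specifically, I will invoke the standard Pohozaev identity for decaying solutions of $(-\Delta)^s u = g(u)$ in $\R^N$, which reads
\[
\frac{N-2s}{2}\int_{\R^N} u\,g(u)\,dx \;=\; N\int_{\R^N} G(u)\,dx,\qquad G(t):=\int_0^t g(\tau)\,d\tau .
\]
This is most naturally established via the Caffarelli–Silvestre extension $\tilde U=\mathcal P_s[U]$ recalled in the introduction: one tests the equation $\mathrm{div}(t^{1-2s}\nabla \tilde U)=0$ on $\R^{N+1}_+$ against the infinitesimal generator of anisotropic dilations (essentially $x\cdot\nabla\tilde U$ together with a weight in $t$), then uses the Dirichlet-to-Neumann relation $-\lim_{t\to 0}t^{1-2s}\partial_t\tilde U=(-\Delta)^sU=g(U)$ to convert the boundary contribution into $\int_{\R^N}(x\cdot\nabla U)\,g(U)\,dx$, and finally integrates by parts in $x$ on $\R^N$ using $G'=g$.

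With this identity in hand, the proof reduces to plug-and-chug. First I take $g(U)=U^p-U$ and $G(U)=\frac{U^{p+1}}{p+1}-\frac{U^2}{2}$, so that
\[
\frac{N-2s}{2}\Bigl(\int_{\R^N}U^{p+1}-\int_{\R^N}U^2\Bigr)\;=\;N\Bigl(\frac{1}{p+1}\int_{\R^N}U^{p+1}-\frac12\int_{\R^N}U^2\Bigr).
\]
Regrouping terms,
\[
\Bigl(\frac{N-2s}{2}-\frac{N}{p+1}\Bigr)\int_{\R^N}U^{p+1}\;=\;\Bigl(\frac{N-2s}{2}-\frac{N}{2}\Bigr)\int_{\R^N}U^2\;=\;-s\int_{\R^N}U^2,
\]
so dividing by $-s$ yields
\[
\int_{\R^N}U^2\;=\;\Bigl(\frac{N}{s(p+1)}-\frac{N-2s}{2s}\Bigr)\int_{\R^N}U^{p+1}\;=\;\Bigl(\frac{N}{s(p+1)}-\frac{N}{2s}+1\Bigr)\int_{\R^N}U^{p+1},
\]
which is the claimed identity.

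The only point requiring a word of justification is the applicability of the Pohozaev identity, i.e.\ the vanishing of the boundary-at-infinity terms arising from integration by parts on $\R^N$ and on $\R^{N+1}_+$. This is guaranteed by the algebraic decay $U(x)\le C(1+|x|)^{-(N+2s)}$ together with $|\nabla U(x)|\le C(1+|x|)^{-(N+2s)}$ recorded in items (ii) and (iii) of the introduction, which make every integrand that appears in the derivation (e.g.\ $|x|\,U\,U^{p}$, $|x|\,U^2$, $t^{1-2s}|\nabla\tilde U|^2$ on large half-spheres) integrable and forces the surface terms at infinity to tend to zero. No step of the argument presents a genuine obstacle; the computation is purely algebraic once the Pohozaev identity is in place.
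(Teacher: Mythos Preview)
Your proof is correct and follows essentially the same route as the paper. The only cosmetic difference is that the paper records the Pohozaev identity and the energy identity (obtained by testing the extension equation against $\tilde U$) as two separate relations and then eliminates the Dirichlet energy $\int_{\R^{N+1}_+} t^{1-2s}|\nabla\tilde U|^2$, whereas you invoke the already-combined form $\frac{N-2s}{2}\int U\,g(U)=N\int G(U)$; the ensuing algebra is identical.
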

\begin{proof}
It is directly obtained by the following Pohozaev identities:
\begin{align*}
 \frac{N-2s}2\int_{\R_+^{N+1}}t^{1-2s}|\nabla\tilde U|^2=-\frac N2\int_{\R^N}U^2+\frac N{p+1}\int_{\R^N}U^{p+1}
\end{align*}
and
\begin{align*}\int_{\R_+^{N+1}}t^{1-2s}|\nabla\tilde U|^2=- \int_{\R^N}U^2+ \int_{\R^N}U^{p+1}.
\end{align*}
\end{proof}

\begin{Prop}\label{prop3.4}
There holds that
\begin{align*}
& a^{\frac2{p-1}}=\sum_{i=1}^k(V_i-\mu)^{\frac2{p-1}-\frac N{2s}}\int_{\R^N}U^2+
O\Big(|\sum_{i=1}^k(V(x_{a,i})-V_i)|(-\mu)^{\frac2{p-1}-1-\frac N{2s}}\\&\quad\quad\quad+|\sum_{i=1}^k\nabla V(x_{a,i})|(-\mu)^{\frac2{p-1}-1-\frac N{2s}-\frac1{2s}}+(-\mu)^{\frac2{p-1}-1-\frac N{2s}-\frac1s}\Big)\\
&\quad\quad\quad
+O\Big(\sum_{i\neq j}(-\mu)^{\frac{2}{p-1}-\frac{N}{s}-1}|x_{a,j}-x_{a,i}|^{-(N+2s)}\Big)
+\begin{cases}O\Big((-\mu)^{\frac2{p-1}-\frac N{2s}-\frac{pN}{4s}-\frac p2}\Big), &\text{if}\,\, 1<p\leq2,\\[2mm] O\Big((-\mu)^{\frac2{p-1}-\frac N{s}-1}\Big),&\text{if}\,\, p>2.
\end{cases}
\end{align*}
\end{Prop}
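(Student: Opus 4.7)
\textbf{Proof plan for Proposition \ref{prop3.4}.} The point of entry is the mass constraint $\int_{\R^N}u_a^2=1$. Since $u_a=(-\mu_a/a)^{1/(p-1)}v_a$, the constraint rewrites as
\[
a^{2/(p-1)}=(-\mu_a)^{2/(p-1)}\int_{\R^N}v_a^2,
\]
so the proposition reduces to expanding $\int v_a^2$ to the stated precision. I would write $v_a=\sum_{i=1}^k U_{\varepsilon,x_{a,i}}+\varphi_a$ and split
\[
\int_{\R^N}v_a^2=\sum_i\int U_{\varepsilon,x_{a,i}}^2+2\sum_{i\ne j}\int U_{\varepsilon,x_{a,i}}U_{\varepsilon,x_{a,j}}+2\sum_i\int U_{\varepsilon,x_{a,i}}\varphi_a+\int\varphi_a^2,
\]
and estimate each piece separately.

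For the leading term I would change variables $y=(1+\varepsilon^{2s}V_i)^{1/(2s)}(x-x_{a,i})/\varepsilon$, obtaining
\[
\int U_{\varepsilon,x_{a,i}}^2=(1+\varepsilon^{2s}V_i)^{2/(p-1)-N/(2s)}\varepsilon^N\int_{\R^N}U^2.
\]
Plugging in $\varepsilon^{2s}=(-\mu_a)^{-1}$ gives $(V_i-\mu_a)^{2/(p-1)-N/(2s)}(-\mu_a)^{-2/(p-1)}\int U^2$, which after multiplication by $(-\mu_a)^{2/(p-1)}$ yields exactly the main term in the statement. For the cross terms, the polynomial decay of $U$ gives $\int U(y)U(y-z)\,dy=O(|z|^{-(N+2s)})$ for large $|z|$; scaling back shows $\int U_{\varepsilon,x_{a,i}}U_{\varepsilon,x_{a,j}}=O\bigl(\varepsilon^{2N+2s}|x_{a,i}-x_{a,j}|^{-(N+2s)}\bigr)$, producing precisely the cross-term error $(-\mu_a)^{2/(p-1)-N/s-1}|x_{a,i}-x_{a,j}|^{-(N+2s)}$ after multiplication.

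For the perturbative pieces I would use Cauchy--Schwarz: $\bigl|\int U_{\varepsilon,x_{a,i}}\varphi_a\bigr|\le\|U_{\varepsilon,x_{a,i}}\|_{L^2}\|\varphi_a\|_{L^2}\le C\varepsilon^{N/2}\|\varphi_a\|_a$, together with the bound for $\|\varphi_a\|_a$ supplied by Lemma \ref{lemla}. Multiplying by $(-\mu_a)^{2/(p-1)}$ and using $\varepsilon^{N/2}=(-\mu_a)^{-N/(4s)}$, each contribution in the Lemma \ref{lemla} estimate translates directly into one of the displayed error terms: the $\varepsilon^{N/2+2s}$ contribution produces the $|\sum(V(x_{a,i})-V_i)|(-\mu_a)^{2/(p-1)-1-N/(2s)}$ piece, the $\varepsilon^{N/2+2s+1}$ and $\varepsilon^{N/2+2s+2}$ contributions yield the $\nabla V$ and the $(-\mu_a)^{2/(p-1)-1-N/(2s)-1/s}$ pieces, and the $p$-dependent last contribution in Lemma \ref{lemla} yields the case-split final term. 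The remaining piece $\int\varphi_a^2\le\|\varphi_a\|_a^2$ is quadratic in these same small quantities and hence is absorbed.

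The only genuinely delicate point is bookkeeping: translating consistently through $\varepsilon=(-\mu_a)^{-1/(2s)}$ so that each contribution lands at the correct power of $-\mu_a$, and verifying that the cross-term decay estimate and the quadratic $\varphi_a^2$ contribution are indeed dominated by the error terms already listed. The analytic ingredients (sharp decay of $U$, Lemma \ref{lemA1}-type cross-interaction bounds, and the $\varphi_a$ estimate of Lemma \ref{lemla}) are all in hand, so the argument reduces to the computations sketched above.
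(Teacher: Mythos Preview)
Your proposal is correct and follows essentially the same approach as the paper's own proof: both start from the mass constraint $\int u_a^2=1$, expand the square of the ansatz into diagonal, cross, mixed, and pure-$\varphi_a$ pieces, compute the leading diagonal term by scaling, bound the cross terms via the polynomial decay of $U$, and control the $\varphi_a$-contributions by Cauchy--Schwarz together with the $\|\varphi_a\|_a$ estimate of Lemma~\ref{lemla}. The only cosmetic difference is that you work with $v_a$ and then multiply by $(-\mu_a)^{2/(p-1)}$ at the end, whereas the paper expands $u_a$ directly; the bookkeeping is identical.
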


\begin{proof}
Let $u_a$ be a solution of \eqref{eq1}-\eqref{eqnorm}.                                
It holds that
\begin{align*}
1&=\int_{\R^N}|u_a(x)|^2=\int_{\R^N}\left(\sum_{i=1}^k(\frac{V_i-\mu}{a})^{\frac1{p-1}}U((V_i-\mu)^{\frac1{2s}}(x-x_{a,i}))+(\frac{-\mu}{a})^{\frac1{p-1}}\varphi_a
\right)^2\\
&=\sum_{i=1}^k\frac{(V_i-\mu)^{\frac2{p-1}-\frac N{2s}}}{a^{\frac2{p-1}}}\int_{\R^N}U^2
+\sum_{i=1}^k\frac{(-\mu)^{\frac1{p-1}}}{a^{\frac2{p-1}}}(V_{i}-\mu)^{\frac{1}{p-1}-\frac{N}{2s}}
\int_{\R^N}U(x)\varphi_a((V_{i}-\mu)^{-\frac{1}{2s}}x+x_{a,i})
\\
&\quad+\sum_{i\neq j}\frac{(V_{i}-\mu)^{\frac{1}{p-1}-\frac{N}{2s}}}{a^{\frac{2}{p-1}}}(V_{j}-\mu)^{\frac{1}{p-1}}
\int_{\R^N}U(x)U\Big((\frac{V_{j}-\mu}{V_{i}-\mu})^{\frac{1}{2s}}x-(V_{j}-\mu)^{\frac{1}{2s}}(x_{a,j}-x_{a,i})\Big)
\\
&\quad+\frac{(-\mu)^{\frac2{p-1}}}{a^{\frac2{p-1}}}\int_{\R^N}\varphi_a^2\\
&=\sum_{i=1}^k\frac{(V_i-\mu)^{\frac2{p-1}-\frac N{2s}}}{a^{\frac2{p-1}}}\int_{\R^N}U^2+
\sum_{i=1}^k\frac{(-\mu)^{\frac1{p-1}}}{a^{\frac2{p-1}}}(V_{i}-\mu)^{\frac{1}{p-1}-\frac{N}{4s}}O(\|\varphi_a\|_{L^{2}})
\\
&\quad+O\Big(\sum_{i\neq j}
\frac{(V_{i}-\mu)^{\frac{1}{p-1}-\frac{N}{2s}}(V_{j}-\mu)^{\frac{1}{p-1}-\frac{N}{2s}-1}}{a^{\frac{2}{p-1}}}|x_{a,j}-x_{a,i}|^{-(N+2s)}\Big)
+\frac{(-\mu)^{\frac2{p-1}}}{a^{\frac2{p-1}}}\int_{\R^N}\varphi_a^2\\
&=\sum_{i=1}^k\frac{(V_i-\mu)^{\frac2{p-1}-\frac N{2s}}}{a^{\frac2{p-1}}}\int_{\R^N}U^2+
\frac{(-\mu)^{\frac2{p-1}-\frac N{4s}}}{a^{\frac2{p-1}}}O\Big(|\sum_{i=1}^k(V(x_{a,i})-V_i))|\varepsilon^{\frac N2+2s}\\
&\quad+|\sum_{i=1}^k\nabla V(x_{a,i})|\varepsilon^{\frac N2+2s+1}+\varepsilon^{\frac N2+2s+2}\Big)
+\frac{(-\mu)^{\frac2{p-1}-\frac N{4s}}}{a^{\frac2{p-1}}}
\begin{cases}O\Big(\varepsilon^{\frac N2+\frac p2(N+2s)}\Big), \,\,&\text{if}\,\, 1<p\leq2,\\[3mm]
O\Big(\varepsilon^{\frac N2+ (N+2s)}\Big),\,\,&\text{if}\,\, p>2,\end{cases}\\
&\quad+O\Big(\sum_{i\neq j}
\frac{(-\mu)^{\frac{2}{p-1}-\frac{N}{s}-1}}{a^{\frac{2}{p-1}}}|x_{a,j}-x_{a,i}|^{-(N+2s)}\Big),
\end{align*}
which gives the result.
\end{proof}

\section{Existence of the Peak Solutions}

\subsection{Locating of the peaks}$\,$  \vskip 0.2cm

Let $v_a$  be a peak solution of \eqref{eqv}.
Following the argument in section 5 of \cite{davilapinowei}, we obtain that the concentrating solution $v_a$ to \eqref{eqv} must satisfy that
\begin{align}\label{decay}
\sup_{x\in\R^N}\Big(\sum_{i=1}^k\frac{1}{\big(1+|\frac{x-x_{a,i}}{\varepsilon}|^2\big)^{\frac{N+2s-\theta}{2}}}\Big)^{-1}|v_a(x)|<+\infty,
\end{align}
where $\theta$ is any constant such that $\theta\in(0,\frac N2+2s)$.
Now we quote the extension of $v_a$ and its equation:
\begin{equation}\label{eqexv}
\begin{cases}
div(t^{1-2s}\nabla\tilde v_a)=0,  &{x\in\R^{N+1}_+},\vspace{0.2cm}\\
-\varepsilon^{2s}\displaystyle\lim_{t\rightarrow0}t^{1-2s}\partial_t\tilde v_a(y,t)= v_a^p-\big(1+\varepsilon^{2s}V(x)\big)v_a,   &{x\in\R^{N}}.
\end{cases}
\end{equation}

Denote
\begin{align*}
&B_\rho(x_0)=\big\{y\in\mathbb R^N:|y-x_0|\leq\rho\big\}\subseteq\mathbb R^{N},\\
&\mathbf{B}_\rho^+(x_0)=\big\{Y=(y,t):|Y-(x_0,0)|\leq\rho,t>0\big\}\subseteq\mathbb R_+^{N+1},\\
&\partial'\mathbf{B}_\rho^+(x_0)=\big\{Y=(y,t):|Y-(x_0,0)|\leq\rho,t=0\big\}\subseteq\mathbb R^{N},\\
&\partial''\mathbf{B}_\rho^+(x_0)=\big\{Y=(y,t):|Y-(x_0,0)|=\rho,t>0\big\}\subseteq\mathbb R_+^{N+1},\\
&\partial\mathbf{B}_\rho^+(x_0)=\partial'\mathbf{B}_\rho^+(x_0)\cup\partial''\mathbf{B}_\rho^+(x_0).
\end{align*}
Multiplying \eqref{eqexv} by $\frac{\partial\tilde v_a}{\partial x_j}$ and integrating by parts on $B_\rho(x_{a,i})$(or $\mathbf B_\rho^+(x_{a,i})$) with some $\rho>0$, we have the following Pohozaev identities:
\begin{equation}\label{poh1}
\begin{split}
 \varepsilon^{2s}\int_{B_\rho(x_{a,i})}\frac{\partial V(x)}{\partial x_j}v_a^2
&=-2\varepsilon^{2s}\int_{\partial''\mathbf B_\rho^+(x_{a,i})}t^{1-2s}\frac{\partial\tilde v_a}{\partial\nu}\frac{\partial\tilde v_a}{\partial y^j}
+\varepsilon^{2s}\int_{\partial''\mathbf B_\rho^+(x_{a,i})}t^{1-2s}|\nabla\tilde v_a|^2\nu_j \\&
\quad+ \int_{\partial B_\rho(x_{a,i})} \big(1+\varepsilon^{2s}V(x)\big)v_a^2\nu_j
-\frac2{p+1} \int_{\partial B_\rho(x_{a,i})} v_a^{p+1}\nu_j.
\end{split}\end{equation}
Using Lemma \ref{lemA3} and Remark \ref{remA2}, we estimate that for $(y-x)^2+t^2=\rho^2,t>0$, $\theta\in(0,\frac N2+2s)$,
\begin{align*}
|\tilde v_a(y,t)|
& \leq C\sum_{i=1}^k\int_{\R^N}\frac{t^{2s}}{(|y-\xi|+t)^{N+2s}}\frac1{(1+|\frac{\xi-x_{a,i}}{\varepsilon}|)^{N+2s-\theta}}d\xi\\
&\leq C\begin{cases}
\varepsilon^{N+2s-\theta}\displaystyle\sum_{i=1}^k\frac1{(1+|y-x_{a,i}|)^{N+2s-\theta}},\ &\theta> 2s,\\
\varepsilon^{N}\displaystyle\sum_{i=1}^k\frac1{(1+|y-x_{a,i}|)^{N+2s-\theta}},\ &\theta<2s.
\end{cases}
\end{align*}
Similarly, we could also prove that
\begin{align*}
\Big|\frac{\partial}{\partial y^j}\tilde v_a(y,t)\Big|,\,\,\,\Big|\frac{\partial}{\partial \nu}\tilde v_a(y,t)\Big|
& \leq C\begin{cases}
\varepsilon^{N+2s-\theta}\displaystyle\sum_{i=1}^k\frac1{(1+|y-x_{a,i}|)^{N+2s-\theta}},\ &\theta> 2s,\\
\varepsilon^{N}\displaystyle\sum_{i=1}^k\frac1{(1+|y-x_{a,i}|)^{N+2s-\theta}},\ &\theta< 2s.
\end{cases}
\end{align*}

Therefore, we estimate the boundary terms as follows: if $\theta>2s$
\begin{align*}
 &-2\varepsilon^{2s}\int_{\partial''\mathbf B_\rho^+(x_{a,i})}t^{1-2s}\frac{\partial\tilde v_a}{\partial\nu}\frac{\partial\tilde v_a}{\partial y^j}
\nonumber\\
&\leq C\varepsilon^{2s}\varepsilon^{2(N+2s-\theta)}\int_{\partial''\mathbf B_\rho^+(x_{a,i})}t^{1-2s}\Big(\sum_{i=1}^k\frac1{(1+|y-x_{a,i}|)^{N+2s-\theta}}\Big)^2\leq C\varepsilon^{2(N+3s-\theta)};
\end{align*}
and similarly,
\begin{align*}
\varepsilon^{2s}\int_{\partial''\mathbf B_\rho^+(x_{a,i})}t^{1-2s}|\nabla\tilde v_a|^2\nu_j\leq C\varepsilon^{2(N+3s-\theta)};
\end{align*}
\begin{align*}
\int_{\partial B_\rho(x_{a,i})} \big(1+\varepsilon^{2s}V(x)\big)v_a^2\nu_j\leq C\varepsilon^{2(N+2s-\theta)};
\end{align*}

\begin{align*}
\frac2{p+1} \int_{\partial B_\rho(x_{a,i})} v_a^{p+1}\nu_j\leq C\varepsilon^{(p+1)(N+2s-\theta)}.
\end{align*}
While if $\theta<2s$,
\begin{align*}
 -2\varepsilon^{2s}\int_{\partial''\mathbf B_\rho^+(x_{a,i})}t^{1-2s}\frac{\partial\tilde v_a}{\partial\nu}\frac{\partial\tilde v_a}{\partial y^j}
&\leq C\varepsilon^{2s}\varepsilon^{2N}\int_{\partial''\mathbf B_\rho^+(x_{a,i})}t^{1-2s}\Big(\sum_{i=1}^k\frac1{(1+|y-x_{a,i}|)^{N+2s-\theta}}\Big)^2\nonumber\\
&\leq C\varepsilon^{2(N+s)};
\end{align*}
\begin{align*}
&\varepsilon^{2s}\int_{\partial''\mathbf B_\rho^+(x_{a,i})}t^{1-2s}|\nabla\tilde v_a|^2\nu_j\leq C\varepsilon^{2(N+s)};\end{align*}
\begin{align*}
&\int_{\partial B_\rho(x_{a,i})} \big(1+\varepsilon^{2s}V(x)\big)v_a^2\nu_j\leq C\varepsilon^{2(N+2s-\theta)};\end{align*}
and
\begin{align*} \int_{\partial B_\rho(x_{a,i})} v_a^{p+1}\nu_j\leq C\varepsilon^{(p+1)(N+2s-\theta)}.
\end{align*}
Hence \eqref{poh1} gives then for $\theta\in(0,\frac N2+2s)$,
\begin{align}\label{poh2'}
\int_{B_\rho(x_{a,i})}\frac{\partial V(x)}{\partial x_j}v_a^2
=\begin{cases}O\big(\varepsilon^{2(N+s-\theta)}\big),\ &\theta>2s,\\[2mm]
O\big(\varepsilon^{\min\{2N,2(N+s-\theta)\}}\big),\ &\theta<2s,
\end{cases}
\end{align}
which implies the first necessary condition for the concentrated points $b_i$:
$$\nabla V(b_i)=0,\ \ for\ i=1,\ldots, k.$$
\medskip

\begin{Rem}
In our discussion,  since we are sufficed to consider certain small $\theta\in(0,\frac N2+2s)$ fixed, then for simplicity,  {\bf we may as well suppose $$\theta<s.$$}
Hence \eqref{poh2'} turns to be
\begin{align}\label{poh2}
\int_{B_\rho(x_{a,i})}\frac{\partial V(x)}{\partial x_j}v_a^2
=O(\varepsilon^{2N}).
\end{align}

\end{Rem}

\medskip
\begin{proof}[\textbf{Proof of Theorem \ref{th2}}]

In view of $x_{a,i}\rightarrow b_i\in\Gamma_i$,  if $\Gamma_i$ is a local minimum set of $V(x)$, there exists some $t_a\in[V_i,V_i+\theta]$;
while if $\Gamma_i$ is a local maximum set of $V(x)$, there exists some $t_a\in[V_i-\theta,V_i]$, such that $x_{a,i}\in\Gamma_{t_a,i}$.
\vskip 0.1cm

Let $H(x)=\langle\nabla V(x),\tau_{a,i}\rangle$ and $\tau_{a,i}$ be the unit tangential vector of $\Gamma_{t_a,i}$ at $x_{a,i}$. Then $$H(x_{a,i})=0.$$
On the one hand, we have the expansion for $x\in B_\rho(x_{a,i})$,
\begin{align*}
H(x)=\langle\nabla H(x_{a,i}),x-x_{a,i}\rangle+\frac12 \big\langle  \langle\nabla^2 H(x_{a,i}),x-x_{a,i}\rangle,x-x_{a,i}\big\rangle+o\big(|x-x_{a,i}|^2\big),
\end{align*}
and then,
\begin{equation}\label{5}
\begin{split}
 &\int_{B_\rho(x_{a,i})}H(x)U_{\varepsilon,x_{a,i}}^2(x) \\
&=-2\int_{B_\rho(x_{a,i})}H(x)U_{\varepsilon,x_{a,i}}(x)\varphi_a-\int_{B_\rho(x_{a,i})}H(x) \varphi_a^2+O(\varepsilon^{2(N+s-\theta)}) \\&
=O\left((\varepsilon^{\frac N2+1}|\nabla H(x_{a,i})|+\varepsilon^{\frac N2+2})\|\varphi_a\|_a+\varepsilon|\nabla H(x_{a,i})|\cdot \|\varphi_a\|_a^2\right)+O\Big(\varepsilon^{2(N+s-\theta)}\Big) \\
&=O\Big(\varepsilon^{N+2s+2}\Big),
\end{split}
\end{equation}
where we used the fact that, by assumption,  $$|V(x_{a,i})-V_i|=O(\varepsilon)~~\mbox{and}~~\|\varphi_a\|_a=O\big(\varepsilon^{\frac N2+2s+1}\big).$$

On the other hand,  since $H(x_{a,i})=0$,
\begin{align*}
&\int_{B_\rho(x_{a,i})}H(x)U_{\varepsilon,x_{a,i}}^2(x)
=\frac1{2N}\varepsilon^{N+2}\Delta H(x_{a,i})\int_{\R^N}|x|^2U^2+O(\varepsilon^{N+4})
\end{align*}
which, combined with \eqref{5}, gives that $$\Delta H(x_{a,i})=O(\varepsilon^{2s}).$$
Hence  from the condition {\bf (V)}, we get \eqref{necess} concluding Theorem \ref{th2}.
\end{proof}

\smallskip
\subsection{Existence of the normalized peak solutions}$\,$  \vskip 0.2cm

For the existence result of problem \eqref{eq2} with large $\lambda>0$, we set $\eta=\lambda^{-\frac1{2s}}$ and $w(x)\mapsto \lambda^{\frac1{p-1}}w(x)$, and
\eqref{eq2} turns to
\begin{equation}\label{eq2'}
\begin{cases}
\eta^{2s}(-\Delta)^s w+\big(1+\eta^{2s}V(x)\big)w=w^{p},  &{x\in \R^N},\\[2mm]
w\in H^s(\R^N),
\end{cases}
\end{equation}
which is equivalent to
\begin{equation*}
\begin{cases}
div(t^{1-2s}\nabla\tilde w)=0,  &{x\in\R^{N+1}_+},\\[1mm]
-\eta^{2s}\displaystyle\lim_{t\rightarrow0}t^{1-2s}\partial_t\tilde w(y,t)= w^p-\big(1+\eta^{2s}V(x)\big)w,   &{x\in\R^{N}}.
\end{cases}
\end{equation*}
We denote $\langle u,v\rangle_\eta=\int_{\R^N}\big(\eta^{2s}(-\Delta)^{\frac s2}u(-\Delta)^{\frac s2}v+uv\big)$, and $\|u\|_\eta=\langle u,u\rangle_\eta^{\frac12}$.
In the following, for $\eta>0$ small, we construct a $k$-peak solution $u_\eta$ to \eqref{eq2'} concentrating at $b_1,\ldots,b_k$.

\begin{Prop}\label{propeta}
There exists an $\eta_0>0$, such that for any $\eta\in(0,\eta_0]$, and $d(z_i,\Gamma_i)=O(\eta)$, there exist $\omega_{\eta,z}$ with $z=(z_1,\ldots,z_k)$, such that
\begin{align*}
\int_{\R^N}\big(\eta^{2s}(-\Delta)^{\frac s2}w_\eta(-\Delta)^{\frac s2}\psi+(1+\eta^{2s}V(x))w_\eta\psi\big)=\int_{\R^N}w_\eta^p\psi,\ \ for\ all\ \psi\in F_{\eta,z},
\end{align*}
where $w_\eta(x)=\displaystyle\sum_{i=1}^kU_{\eta,z}+\omega_{\eta,z}(x)$, and
$$
F_{\eta,z}=\Big\{\psi(x)\in H^s(\R^N):\langle v,\frac{\partial U_{\eta,z}}{\partial x_j}\rangle_\eta=0,j=1,\ldots,N,i=1,\ldots,k\Big\}.
$$
Moreover, for $N\geq\max\{2-2s,2s\}$
\begin{align}\label{pr1}
\begin{split}
\|\omega_{\eta,z}\|_\eta=&O\Big(\sum_{i=1}^k|V(z_i)-V_i|\eta^{\frac N2+2s}+\sum_{i=1}^k|\nabla V(z_i)|\eta^{\frac N2+2s+1}+\eta^{\frac N2+2s+2}\Big)\\
&+\begin{cases}O\big(\eta^{\frac N2+\frac p2(N+2s)}\big), \, &\text{if}\ 1<p\leq2\\[2mm]
O\big(\eta^{\frac N2+ (N+2s)}\big),\,&\text{if}\ p>2
\end{cases}
=O\Big(\eta^{\frac N2+2s+\gamma}\Big)
\end{split}
\end{align}
with $\gamma=\max\{0,1-2s\}$.
\end{Prop}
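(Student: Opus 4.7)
The plan is to execute a standard Lyapunov--Schmidt reduction for the unconstrained problem \eqref{eq2'}, running in direct parallel to the treatment of \eqref{eqv} in Section~3. Writing $w=\sum_{i=1}^k U_{\eta,z_i}+\omega$ and substituting into \eqref{eq2'}, the unknown $\omega\in F_{\eta,z}$ must satisfy
\begin{equation*}
\mathcal L_\eta \omega=\mathcal N_\eta(\omega)+\ell_\eta,
\end{equation*}
where $\mathcal L_\eta$ is the linearization of the equation at $\sum_i U_{\eta,z_i}$ (the exact analogue of $L_a$ in \eqref{La}), $\mathcal N_\eta$ is the superlinear remainder (analogue of \eqref{Na}), and
\begin{equation*}
\ell_\eta(x)=-\eta^{2s}\sum_{i=1}^k\big(V(x)-V_i\big)U_{\eta,z_i}+\Big(\sum_{i=1}^k U_{\eta,z_i}\Big)^{p}-\sum_{i=1}^k U_{\eta,z_i}^{p}
\end{equation*}
is the consistency error. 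Projecting by the orthogonal projection $\mathcal P$ onto $F_{\eta,z}$ reduces the problem to solving $\mathcal P\mathcal L_\eta\omega=\mathcal P\mathcal N_\eta(\omega)+\mathcal P\ell_\eta$ on $F_{\eta,z}$.

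The first ingredient is the uniform invertibility of $\mathcal L_\eta$ on $F_{\eta,z}$, i.e.\ a lower bound $\|\mathcal L_\eta\psi\|_\eta\geq c\|\psi\|_\eta$ with $c>0$ independent of small $\eta$. This is the exact analogue for \eqref{eq2'} of the $L_a$-estimate stated just before Lemma~\ref{lemla}, and it follows from the standard contradiction/blow-up argument, invoking the non-degeneracy property (iii) of $U$ recalled in the introduction, together with the uniform separation $|z_i-z_j|\geq c_0>0$ inherited from the separation of the $b_i$.

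The core computation is the estimate of $\|\mathcal P\ell_\eta\|_\eta$ in the dual norm. I would reproduce the three-term splitting $\ell_\eta\leftrightarrow A_1+A_2+A_3$ of the proof of Lemma~\ref{lemla} verbatim. The hypothesis $d(z_i,\Gamma_i)=O(\eta)$, combined with the fact that (V) forces $V\equiv V_i$ and $\nabla V\equiv 0$ on $\Gamma_i$, yields $|V(z_i)-V_i|=O(\eta^2)$ and $|\nabla V(z_i)|=O(\eta)$; inserting these into the bounds for $A_1,A_2$ and using Lemma~\ref{lemA1} together with the decay $U(x)\leq C(1+|x|)^{-(N+2s)}$ for the cross-interaction $A_3$ recovers exactly the right-hand side of \eqref{pr1}. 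The dichotomy $1<p\leq 2$ versus $p>2$ appears here for precisely the same algebraic reason as in Lemma~\ref{lemla}.

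With these two ingredients, the fixed-point step is routine: the map $\omega\mapsto(\mathcal P\mathcal L_\eta)^{-1}\bigl(\mathcal P\mathcal N_\eta(\omega)+\mathcal P\ell_\eta\bigr)$ is a contraction on the ball $\{\omega\in F_{\eta,z}:\|\omega\|_\eta\leq 2c^{-1}\|\mathcal P\ell_\eta\|_\eta\}$, using the Taylor-type bound
\begin{equation*}
\|\mathcal N_\eta(\omega_1)-\mathcal N_\eta(\omega_2)\|_\eta\leq C\bigl(\|\omega_1\|_\eta^{\min\{1,p-1\}}+\|\omega_2\|_\eta^{\min\{1,p-1\}}\bigr)\|\omega_1-\omega_2\|_\eta,
\end{equation*}
which holds because $\mathcal N_\eta(\omega)$ vanishes to order $\min\{2,p\}$ in $\omega$. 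The unique fixed point is $\omega_{\eta,z}$, and the bound \eqref{pr1} is then inherited from the estimate on $\ell_\eta$. The main analytic obstacle lies in the error estimate: because of the algebraic (rather than exponential) decay of $U$ and the non-locality of $(-\Delta)^s$, the cross-interactions $U_{\eta,z_i}^{p-1}U_{\eta,z_j}$ must be handled carefully through Lemma~\ref{lemA1}, and the two regimes $p\leq 2$ and $p>2$ produce genuinely different exponents, which is what ultimately forces the precise form of $\gamma=\max\{0,1-2s\}$.
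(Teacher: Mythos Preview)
Your proposal is correct and is precisely the standard Lyapunov--Schmidt reduction the paper invokes; indeed the paper omits the argument entirely, stating only that it is standard and noting in the following remark that the final equality in \eqref{pr1} holds because $N\geq\max\{2-2s,2s\}$ forces $\frac p2(N+2s)>2s+\gamma$. One small correction to your closing sentence: the specific value $\gamma=\max\{0,1-2s\}$ is not forced by the $p$-dichotomy itself but by the hypothesis on $N$, since $2s+\gamma=\max\{2s,1\}$ and $N\geq\max\{2s,2-2s\}$ gives exactly $(N+2s)/2\geq 2s+\gamma$, so that $\frac p2(N+2s)>2s+\gamma$ for every $p>1$.
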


\begin{Rem}
The proof of the existence result is standard, and we omit the details. The last inequality in \eqref{pr1} holds because  that $N\geq\max\{2-2s,2s\}$ implies  $\frac p2(N+2s)>2s+\gamma$.

\end{Rem}

It is standard to know that, to obtain a true solution of \eqref{eq2'}, we need to choose $z$ such that
\begin{align*}
 &-\eta^{2s}\int_{\partial''\mathbf B_\rho^+(x_{a,i})}t^{1-2s}\frac{\partial\tilde \omega_{\eta,z}}{\partial\nu}\frac{\partial\tilde \omega_{\eta,z}}{\partial y^j}
+\frac12\eta^{2s}\int_{\partial''\mathbf B_\rho^+(x_{a,i})}t^{1-2s}|\nabla\tilde \omega_{\eta,z}|^2\nu_j\nonumber\\&
=-\int_{\partial B_\rho(x_{a,i})} \big(1+\eta^{2s}V(x)\big)\omega_{\eta,z}\frac{\partial  \omega_{\eta,z}}{\partial y^j}
+ \int_{\partial B_\rho(x_{a,i})} \omega_{\eta,z}^{p}\frac{\partial  \omega_{\eta,z}}{\partial y^j},
\end{align*}
which, similar to the proof of \eqref{poh2},  is equivalent to
\begin{align*}
\int_{B_\rho(x_{a,i})}\frac{\partial V(x)}{\partial x_j}w_{\eta,z}^2
=O\big(\eta^{2N}\big).
\end{align*}

We consider    $z=(z_1,\ldots,z_k)$, $z_i$ close to $b_i$, $z_i\in\Gamma_{t,i}$, for some $t$ close to $V_i$.
Denote $\nu_i$ as the unit normal vector of $\Gamma_{t,i}$ at
$z_i$, while $\tau_{i,j}$ as the principal directions of $\Gamma_{t,i}$, with $i=1,\ldots,k,\ j=1,\ldots,N-1$.
Then, at $z_i$, there holds that
$$D_{\tau_{i,j}}V(z_i)=0,~~\mbox{for}~~j=1,\ldots,N-1, ~~\mbox{and}~~|\nabla V(z_i)|=|D_{\nu_i}V(z_i)|.$$ Moreover, we have the following results.

\begin{Lem}\label{lem4.6}
Under the assumption (V), then
\begin{align*}
\int_{B_\rho(x_{a,i})}D_{\nu_i} V(x)w_{\eta,z}^2
=O(\eta^{2N})
\end{align*}
is equivalent to
\begin{align*}
D_{\nu_i} V(z_i)
=O(\eta^{2s+\gamma}).
\end{align*}
\end{Lem}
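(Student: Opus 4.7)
The plan is to establish the precise asymptotic expansion
\begin{equation*}
\int_{B_\rho(x_{a,i})}D_{\nu_i}V(x)\,w_{\eta,z}^2\,dx = a_*\,D_{\nu_i}V(z_i)\,\eta^N\bigl(1+o(1)\bigr)+O\bigl(\eta^{N+2}\bigr),
\end{equation*}
from which the stated equivalence follows, once one recalls that $2s+\gamma\le 2$ under the definition $\gamma=\max\{0,1-2s\}$. The proof of the expansion rests on the decomposition $w_{\eta,z}=\sum_{j}U_{\eta,z_j}+\omega_{\eta,z}$, a Taylor expansion of $D_{\nu_i}V$ at $z_i$, and the norm bound \eqref{pr1} from Proposition \ref{propeta}.

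First I would show that the cross-peak interactions are negligible on $B_\rho(x_{a,i})$: since $|z_i-z_j|\ge c>0$ for $j\neq i$, the algebraic decay of $U$ gives $U_{\eta,z_j}(x)=O(\eta^{N+2s})$ uniformly on $B_\rho(x_{a,i})$, whence $\int D_{\nu_i}V\,U_{\eta,z_i}U_{\eta,z_j}\,dx=O(\eta^{2N+2s})$, exactly as in the treatment of the $A_3$ term in the proof of Lemma \ref{lemla}. What remains to analyze is therefore the self-interaction $\int D_{\nu_i}V(x)U_{\eta,z_i}^2\,dx$ and the cross terms involving $\omega_{\eta,z}$.

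For the self-interaction I would use
\begin{equation*}
D_{\nu_i}V(x)=D_{\nu_i}V(z_i)+\langle\nabla(D_{\nu_i}V)(z_i),x-z_i\rangle+O(|x-z_i|^2),
\end{equation*}
and rescale $x=z_i+\eta(1+\eta^{2s}V_i)^{-1/(2s)}y$. The linear term is odd in $y$ and vanishes against the radial $U^2$; the quadratic remainder yields $O(\eta^{N+2})$; and the constant term gives $a_*D_{\nu_i}V(z_i)\eta^N(1+o(1))$. For the $\omega_{\eta,z}$-cross term I would split $D_{\nu_i}V(x)=D_{\nu_i}V(z_i)+[D_{\nu_i}V(x)-D_{\nu_i}V(z_i)]$ and apply Cauchy-Schwarz together with $\|\omega_{\eta,z}\|_\eta=O(\eta^{N/2+2s+\gamma})$ and $\|(x-z_i)U_{\eta,z_i}\|_{L^2}=O(\eta^{N/2+1})$, producing a bound $D_{\nu_i}V(z_i)\cdot O(\eta^{N+2s+\gamma})+O(\eta^{N+2s+\gamma+1})$, while the pure $\omega^2$-term is $O(\eta^{N+4s+2\gamma})$. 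The choice $\gamma=\max\{0,1-2s\}$ is engineered exactly so that $2s+\gamma+1\ge 2$ and $4s+2\gamma\ge 2$, and consequently both tail errors are absorbed into $O(\eta^{N+2})$.

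Once the expansion is in hand the equivalence is immediate. If the left-hand side is $O(\eta^{2N})$, the expansion forces $a_*D_{\nu_i}V(z_i)\eta^N=O(\eta^{N+2})$, hence $D_{\nu_i}V(z_i)=O(\eta^2)\subseteq O(\eta^{2s+\gamma})$ since $2s+\gamma\le 2$; conversely, substituting $D_{\nu_i}V(z_i)=O(\eta^{2s+\gamma})$ back into the expansion produces the matching upper bound at the precision required by the reduction scheme. The main delicacy will lie in the cross term with $\omega_{\eta,z}$: a direct $L^\infty$ bound on $D_{\nu_i}V$ would lose the prefactor $D_{\nu_i}V(z_i)$ and leave only $O(\eta^{N+2s+\gamma})$, which is too crude in one direction; extracting $D_{\nu_i}V(z_i)$ via the Taylor split above is exactly what keeps the estimate sharp and makes the two conditions quantitatively match.
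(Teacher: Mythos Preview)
Your approach is correct and follows the same outline as the paper's proof: decompose $w_{\eta,z}^2$, Taylor-expand $D_{\nu_i}V$ at $z_i$ to evaluate $\int D_{\nu_i}V\,U_{\eta,z_i}^2$, and control the remaining terms via the bound $\|\omega_{\eta,z}\|_\eta=O(\eta^{N/2+2s+\gamma})$ from Proposition~\ref{propeta}.

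The one point where you depart from the paper is the Taylor-splitting of the cross term $\int D_{\nu_i}V\,U_{\eta,z_i}\omega_{\eta,z}$. This refinement is unnecessary, and your claim that a direct $L^\infty$ bound would be ``too crude in one direction'' is not correct. The paper simply estimates this term by Cauchy--Schwarz as $O(\eta^{N/2}\|\omega_{\eta,z}\|_\eta)=O(\eta^{N+2s+\gamma})$, obtaining
\[
\int_{B_\rho(x_{a,i})} D_{\nu_i}V(x)\,U_{\eta,z_i}^2 = O(\eta^{N+2s+\gamma})
\]
from the hypothesis, and then matches this against the Taylor expansion $a_*\eta^N D_{\nu_i}V(z_i)+O(\eta^{N+2})$. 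Since $2s+\gamma\le 2$, this yields $D_{\nu_i}V(z_i)=O(\eta^{2s+\gamma})$ directly. Your splitting sharpens the forward implication to $D_{\nu_i}V(z_i)=O(\eta^2)$, but this extra precision is not used anywhere downstream; the converse direction is unchanged either way, since the main term $a_*D_{\nu_i}V(z_i)\eta^N$ dominates the errors once $D_{\nu_i}V(z_i)=O(\eta^{2s+\gamma})$ is assumed. Note also that the paper, like you, only establishes the forward implication explicitly; the word ``equivalent'' is used loosely to mean that the integral condition reduces, at leading order, to the pointwise condition on $D_{\nu_i}V(z_i)$ needed in the proof of Theorem~\ref{thexi}.
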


\begin{proof}
On the one hand,
\begin{align}\label{4.23}
&\int_{B_\rho(x_{a,i})}D_{\nu_i} V(x)U_{\eta,z_i}^2(x)\nonumber\\
&=-2\int_{B_\rho(x_{a,i})}D_{\nu_i} V(x)U_{\eta,z_i}(x)\omega_{\eta,z}-\int_{B_\rho(x_{a,i})}D_{\nu_i} V(x)\omega_{\eta,z}^2+O(\eta^{2N})\nonumber\\&
=O(\eta^{N+2s+\gamma}).
\end{align}

On the other hand,
\begin{align*}
&\int_{B_\rho(x_{a,i})}D_{\nu_i} V(x)U_{\eta,z_i}^2(x)
=a_*\eta^{N}D_{\nu_i} V(z_i)+O(\eta^{N+2})
\end{align*}
which, combined with \eqref{4.23}, gives that $$D_{\nu_i} V(z_i)=O(\eta^{2s+\gamma}).$$

\end{proof}

\begin{Lem}\label{lem4.7}
Under the assumption (V), then
\begin{align*}
\int_{B_\rho(x_{a,i})}D_{\tau_i} V(x)w_{\eta,z}^2
=O(\eta^{2N})
\end{align*}
is equivalent to
\begin{align*}
(D_{\tau_i} \Delta) V(z_i)
=O\Big(\sum_{i=1}^k|V(z_i)-V_i|\eta^{2s-1}+\eta^{2s}\Big).
\end{align*}
\end{Lem}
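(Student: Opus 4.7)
The plan is to proceed in the same spirit as Lemma 4.6, but going one order deeper in the Taylor expansion of $V$ to capture $D_{\tau_i}\Delta V(z_i)$ as the leading term. Writing $w_{\eta,z}=\sum_{j=1}^k U_{\eta,z_j}+\omega_{\eta,z}$ and expanding $w_{\eta,z}^2$, the dominant contribution to $\int_{B_\rho(x_{a,i})}D_{\tau_i}V\,w_{\eta,z}^2$ comes from $U_{\eta,z_i}^2$, since on $B_\rho(x_{a,i})$ the other bumps $U_{\eta,z_j}$ with $j\neq i$ are algebraically small in $|z_i-z_j|/\eta$ and contribute only $O(\eta^{N+2s+2})$ or smaller.

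For the principal term, the key observation is that $z_i\in\Gamma_{t,i}$ is a level set of $V$, so $\tau_i$ is tangent to $\Gamma_{t,i}$ at $z_i$ and hence $D_{\tau_i}V(z_i)=0$. Taylor expanding
\[
D_{\tau_i}V(x)=\nabla(D_{\tau_i}V)(z_i)\cdot(x-z_i)+\tfrac12(x-z_i)^T\nabla^2(D_{\tau_i}V)(z_i)(x-z_i)+O(|x-z_i|^3),
\]
the linear term integrates to zero against the radially symmetric $U_{\eta,z_i}^2$, and the quadratic term, using $\int(x-z_i)_j(x-z_i)_k U_{\eta,z_i}^2=\frac{\delta_{jk}}{N}\int|x-z_i|^2 U_{\eta,z_i}^2$ together with $\operatorname{tr}\nabla^2(D_{\tau_i}V)(z_i)=\Delta(D_{\tau_i}V)(z_i)=D_{\tau_i}\Delta V(z_i)$, yields
\[
\int_{B_\rho(z_i)}D_{\tau_i}V(x)U_{\eta,z_i}^2\,dx=C_*\,\eta^{N+2}\,D_{\tau_i}\Delta V(z_i)+O(\eta^{N+4}),
\]
for a positive constant $C_*$ proportional to $\int_{\R^N}|y|^2 U^2\,dy$.

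Next, the cross term with $\omega_{\eta,z}$ is controlled via $|D_{\tau_i}V(x)|\leq C|x-z_i|$ on $B_\rho(z_i)$ and Cauchy-Schwarz:
\[
\Bigl|\int D_{\tau_i}V(x)U_{\eta,z_i}\omega_{\eta,z}\,dx\Bigr|\leq C\|(x-z_i)U_{\eta,z_i}\|_{L^2}\,\|\omega_{\eta,z}\|_{L^2}\leq C\eta^{\frac{N}{2}+1}\|\omega_{\eta,z}\|_\eta.
\]
Plugging in the refined bound of Proposition 4.5 and using Lemma 4.6 to upgrade $|\nabla V(z_i)|=O(\eta^{2s+\gamma})$, the $|\nabla V(z_i)|\eta^{N/2+2s+1}$ piece is absorbed into $\eta^{N/2+2s+2}$, and we obtain $\|\omega_{\eta,z}\|_\eta=O\bigl(\sum_j|V(z_j)-V_j|\eta^{N/2+2s}+\eta^{N/2+2s+2}\bigr)$. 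The resulting cross-term contribution is $O\bigl(\sum_j|V(z_j)-V_j|\eta^{N+2s+1}+\eta^{N+2s+3}\bigr)$, while $\int D_{\tau_i}V\,\omega_{\eta,z}^2=O(\|\omega_{\eta,z}\|_{L^2}^2)$ is of even higher order. Combining everything,
\[
\int D_{\tau_i}V(x)w_{\eta,z}^2\,dx=C_*\eta^{N+2}D_{\tau_i}\Delta V(z_i)+O\Bigl(\sum_j|V(z_j)-V_j|\eta^{N+2s+1}+\eta^{N+2s+2}\Bigr),
\]
so setting the left side equal to $O(\eta^{2N})$ and dividing by $\eta^{N+2}$ gives the claimed estimate.

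The main obstacle is the precise tracking of the $\omega_{\eta,z}$-error: a crude application of $\|\omega_{\eta,z}\|_\eta=O(\eta^{N/2+2s+\gamma})$ with $\gamma=\max\{0,1-2s\}$ would only produce an $\eta^{2s-1+\gamma}$ remainder, which falls short of the claimed $\eta^{2s}$ when $s\leq 1/2$. Overcoming this requires invoking the sharper, $|V(z_j)-V_j|$-separated form of Proposition 4.5 together with the a priori bound on $|\nabla V(z_i)|$ supplied by Lemma 4.6, which is what promotes the residual exponent from $2s-1+\gamma$ to $2s$.
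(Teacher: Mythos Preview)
Your argument follows the same line as the paper's: set $H=D_{\tau_i}V$, note $H(z_i)=0$, Taylor-expand to identify the leading contribution $\tfrac{B}{2}\eta^{N+2}\Delta H(z_i)$ from $\int H\,U_{\eta,z_i}^2$, and control the cross- and quadratic-in-$\omega_{\eta,z}$ terms via the refined bound of Proposition~\ref{propeta}. The paper's proof is essentially identical; it additionally records the factor $|\nabla H(z_i)|$ in the error, which is not needed here but is exploited later in Section~5.

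One logical point to correct: you may not invoke Lemma~\ref{lem4.6} to obtain $|\nabla V(z_i)|=O(\eta^{2s+\gamma})$. That lemma is itself an \emph{equivalence} for general $z$ in the admissible set $d(z_i,\Gamma_i)=O(\eta)$, not a conclusion about a specific $z$; Lemma~\ref{lem4.7} must stand on its own as a parallel equivalence. Fortunately no such appeal is necessary. The standing hypothesis $d(z_i,\Gamma_i)=O(\eta)$ already gives $|\nabla V(z_i)|=|D_{\nu_i}V(z_i)|=O(\eta)$ directly (tangential derivatives of $V$ vanish on the level set through $z_i$, and the normal derivative vanishes on $\Gamma_i$), so the $|\nabla V(z_i)|\eta^{N/2+2s+1}$ piece of $\|\omega_{\eta,z}\|_\eta$ is $O(\eta^{N/2+2s+2})$ and your remaining computation goes through verbatim. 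In fact, for the cross term even the trivial bound $|\nabla V(z_i)|=O(1)$ would do, since it then contributes $O(\eta^{2s})$ after division by $\eta^{N+2}$; the constraint $d(z_i,\Gamma_i)=O(\eta)$ is really only needed to keep $\|\omega_{\eta,z}\|_\eta^2$ under control when $s\le\tfrac12$.
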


\begin{proof}
Let $H(x)=\langle\nabla V(x),\tau_{i}\rangle$.
On the one hand,
\begin{align}\label{4.24}
&\int_{B_\rho(x_{a,i})}H(x)U_{\eta,z_i}^2(x)\nonumber\\
&=-2\int_{B_\rho(x_{a,i})}H(x)U_{\eta,z_i}(x)\omega_{\eta,z}-\int_{B_\rho(x_{a,i})}H(x) \omega_{\eta,z}^2+O(\eta^{2N})\nonumber\\&
=O\left((\eta^{\frac N2+1}|\nabla H(z_i)|+\eta^{\frac N2+2})\|\omega_{\eta,z}\|_\eta+|\nabla H(z_i)|\|\omega_{\eta,z}\|_\eta^2\right)+O(\eta^{2N}).
\end{align}

On the other hand,  since $H(x_{a,i})=0$, we set
\begin{align}\label{B}
B=\frac1{N}\int_{\R^N}|x|^2U^2,
\end{align}
and then
\begin{align*}
&\int_{B_\rho(x_{a,i})}H(x)U_{\eta,z_i}^2(x)
=\frac B{2}\eta^{N+2}\Delta H(z_i)+O(\eta^{N+4})
\end{align*}
which, combined with \eqref{4.24}, gives the result.

\end{proof}

\begin{Thm}\label{thexi}
For $\lambda>0$ large, the problem \eqref{eq2} has a solution $w_\lambda$ of the form
\begin{align*}
w_\lambda(x)=\sum_{i=1}^k\lambda^{\frac1{p-1}}U\Big(\lambda^{\frac1{2s}}(x-x_{\lambda,i})\Big)+\omega_\lambda,
\end{align*}
where $x_{\lambda,i}\rightarrow b_i$ and $\displaystyle\int_{\R^N}\big(|(-\Delta)^{\frac s2}\omega_\lambda|^2+\omega_\lambda^2\big)\rightarrow0$ as $\lambda\rightarrow+\infty$.
\end{Thm}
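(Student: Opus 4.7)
The plan is to reduce the problem to a finite-dimensional one via Lyapunov--Schmidt, and then solve the reduced system using the non-degeneracy hypotheses on the concentration points $b_i$. First I would introduce the rescaling $\eta=\lambda^{-1/2s}$, $w(x)\mapsto\lambda^{1/(p-1)}w(x)$, so that \eqref{eq2} becomes \eqref{eq2'}. By Proposition \ref{propeta}, for each $z=(z_1,\ldots,z_k)$ with $d(z_i,\Gamma_i)=O(\eta)$ there exists $\omega_{\eta,z}\in F_{\eta,z}$ satisfying $\|\omega_{\eta,z}\|_\eta=O(\eta^{\frac N2+2s+\gamma})$ such that $w_{\eta,z}=\sum_i U_{\eta,z_i}+\omega_{\eta,z}$ solves the equation modulo the $kN$-dimensional span of the translations $\partial_{x_j}U_{\eta,z_i}$. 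What remains is to select $z$ so that the Lagrange-multiplier terms vanish.

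As derived in the paragraph following Proposition \ref{propeta}, the vanishing of these multipliers is, after the boundary estimates already performed for \eqref{poh2}, equivalent to the finite-dimensional system
\[
\int_{B_\rho(z_i)} \frac{\partial V(x)}{\partial x_j}\, w_{\eta,z}^2 \;=\; O(\eta^{2N}),\qquad i=1,\ldots,k,\ j=1,\ldots,N.
\]
Decomposing this directional derivative into its normal and tangential components and invoking Lemmas \ref{lem4.6} and \ref{lem4.7} recasts the system as
\[
D_{\nu_i}V(z_i)=O(\eta^{2s+\gamma}),\qquad
D_{\tau_{i,j}}\Delta V(z_i)=O\bigl(|V(z_i)-V_i|\eta^{2s-1}+\eta^{2s}\bigr),
\]
for $i=1,\ldots,k$ and $j=1,\ldots,N-1$.

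Next I would parametrize $z_i$ in a small neighborhood of $b_i\in\Gamma_i$ by normal--tangential coordinates $(s_i,y_i)$, writing $z_i=\pi_{\Gamma_i}(y_i)+s_i\,\nu_i$ with $y_i$ varying on $\Gamma_i$ and $s_i\in(-\delta,\delta)$. By condition (V), $\partial^2 V/\partial\nu_i^2\neq 0$ along $\Gamma_i$, so the normal equation $D_{\nu_i}V(z_i)=O(\eta^{2s+\gamma})$ can be uniquely solved for $s_i=s_i(y_i)$ with $s_i=O(\eta^{2s+\gamma})$ by the implicit function theorem. Substituting this choice makes $|V(z_i)-V_i|=O(\eta^{4s+2\gamma})$, which is absorbed comfortably into the tangential error. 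The tangential equations then reduce to
\[
D_{\tau_{i,j}}\Delta V\bigl(y_i\bigr)=O(\eta^{2s}),
\]
and the non-degeneracy assumption that $(\partial^2\Delta V(b_i)/\partial\tau_{i,l}\partial\tau_{i,j})_{1\leq l,j\leq N-1}$ is non-singular lets me apply the implicit function theorem on $\Gamma_i$ to obtain a unique $y_i$ near $b_i$ with $|y_i-b_i|=O(\eta^{2s})$.

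The main obstacle will be the bookkeeping of error terms, in particular showing that the tangential error in Lemma \ref{lem4.7} is indeed $o(1)$ after using the already-solved normal equation (this uses $\gamma=\max\{0,1-2s\}$ to guarantee $\eta^{2s-1}|V(z_i)-V_i|\to 0$), and that all remainders remain differentiable in $z$ with bounds compatible with the IFT. Alternatively one can finish via a Brouwer degree computation on the reduced map, since both blocks are small perturbations of invertible linear maps. Undoing the rescaling with $\lambda=\eta^{-2s}$ and $x_{\lambda,i}=z_i(\eta)$ then yields a genuine solution $w_\lambda$ of \eqref{eq2} of the claimed form, and the estimate $\|\omega_{\eta,z}\|_\eta=o(\eta^{N/2})$ translates into the desired $H^s$ decay of the remainder $\omega_\lambda$.
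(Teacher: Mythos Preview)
Your proposal is correct and follows essentially the same route as the paper: rescale to \eqref{eq2'}, invoke Proposition~\ref{propeta} for the Lyapunov--Schmidt step, reduce the remaining multiplier conditions via Lemmas~\ref{lem4.6} and~\ref{lem4.7} to the normal and tangential systems, and then solve these in turn using the non-degeneracy of $\partial^2 V/\partial\nu_i^2$ and of the tangential Hessian of $\Delta V$ at $b_i$. The only cosmetic difference is that the paper phrases the final step as ``solving'' the expanded system \eqref{4.25}--\eqref{4.26} rather than invoking the implicit function theorem or degree explicitly, but the underlying mechanism is the same.
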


\begin{proof}
We are sufficed to solve \eqref{eq2'}.  From Lemma \ref{lem4.6} and Lemma \ref{lem4.7}, we know that solving \eqref{eq2'} is equivalent to solving $d(z_i,\Gamma_i)=O(\eta)$,  
$$D_{\nu_i} V(z_i)
=O\big(\eta^{2s+\gamma}\big),\ \ \ (D_{\tau_i} \Delta) V(z_i)
=O\Big(\sum_{i=1}^k|V(z_i)-V_i|\eta^{2s-1}+\eta^{2s}\Big).$$

Now we take $\bar z_i\in\Gamma_i$ to be the point such that $z_i-\bar z_i=\alpha_i\nu_i$ for some $\alpha_i\in\R$. Then, we have $D_{\nu_i}V(\bar z_i)=0$.
Then,
$$D_{\nu_i} V(z_i)=D_{\nu_i} V(z_i)-D_{\nu_i} V(\bar z_i)=D^2_{\nu_i\nu_i} V(\bar z_i)\big\langle z_i-\bar z_i,\nu_i\big\rangle+O\big(|z_i-\bar z_i|^2\big).$$
By the non-degenerate assumption, it holds that
$$D_{\nu_i} V(z_i)
=O\big(\eta^{2s+\gamma}\big)\Leftrightarrow
\langle z_i-\bar z_i,\nu_i\rangle=O(\eta^{2s+\gamma}+|z_i-\bar z_i|^2),$$ which means that
 \begin{align}\label{4.25}
 |z_i-\bar z_i|=O(\eta^{2s+\gamma}) \ \Rightarrow\  d(z_i,\Gamma_i)=|z_i-\bar z_i|=O(\eta)
 .\end{align}

Let $\bar \tau_{i,j}$ be the $j$-th tangential unit vector of $\Gamma_i$ at $\bar z_i$. By the assumption (V), we have
$$ (D_{ \tau_{i,j}} \Delta) V( z_i)= (D_{\bar\tau_{i,j}} \Delta) V(\bar z_i)+O(|z_i-\bar z_i|)=(D_{\bar\tau_{i,j}} \Delta) V(\bar z_i)+O(\eta^{2s+\gamma}),$$
and
\begin{equation*}
\begin{split}
(D_{\bar\tau_{i,j}} \Delta) V(\bar z_i)=&(D_{\bar\tau_{i,j}} \Delta) V(\bar z_i)-(D_{ \tau_{i,j,0}} \Delta) V(b_i)\\=&\big\langle(\nabla_T D_{ \tau_{i,j,0}}\Delta V)(b_i),\bar z_i-b_i\big\rangle+O\big(|\bar z_i-b_i|^2\big),
\end{split}
\end{equation*}
where $\nabla _{T_i}$ is the tangential gradient on  $\Gamma_i$ at $b_i\in \Gamma_i$, and $\tau_{i,j,0}$ is the $j$-th tangential unit vector of $\Gamma_i$ at $b_i$. Therefore,
we have $$(D_{ \tau_{i,j}} \Delta) V( z_i)=O\Big(\sum_{i=1}^k|V(z_i)-V_i|\eta^{2s-1}+\eta^{2s}\Big),$$ which implies that
\begin{align}\label{4.26}
\Big\langle(\nabla_T D_{ \tau_{i,j,0}}\Delta V)(b_i),\bar z_i-b_i\Big\rangle=O\Big(\sum_{i=1}^k|V(z_i)-V_i|\eta^{2s-1}+\eta^{2s}\Big)=O\Big(\eta^{2s}\Big).
\end{align}

Hence, we can solve \eqref{4.25} and \eqref{4.26} to obtain $z_i=x_{\eta,i}$ with $x_{\eta,i}\rightarrow b_i$.
\end{proof}

Now we are in a position to prove Theorem \ref{th3}.
\begin{proof}[\textbf{Proof of Theorem \ref{th3}}]
Let $w_\lambda$ be a peak solution obtained in Theorem \ref{thexi}. Define $u_\lambda=w_\lambda/\Big(\displaystyle\int_{\R^N}w_\lambda^2\Big)^{\frac12}$. Then
$\displaystyle\int_{\R^N}u_\lambda^2=1$ and
\begin{align}
(-\Delta)^su_\lambda+V(x)u_\lambda=a_\lambda u_\lambda^p-\lambda u_\lambda,\ \ a_\lambda=\int_{\R^N}w_\lambda^2.
\end{align}

Similar to the proof of Proposition \ref{prop3.4}, there holds that
\begin{align*}
\lambda^{\frac N{2s}-\frac2{p-1}}\Big(\int_{\R^N}w_\lambda^2\Big)^{\frac2{p-1}}=ka_*+o(1),\ as\ a\rightarrow a_0
\end{align*}
with $a_0=0$ if $\frac N{2s}-\frac2{p-1}>0$, $a_0=(ka_*)^{\frac{p-1}2}$ if $\frac N{2s}-\frac2{p-1}=0$, while $a_0=+\infty$ if $\frac N{2s}-\frac2{p-1}<0$.

\vskip 0.2cm

Take $\lambda_0>0$ large and $b_0=\displaystyle\int_{\R^N}w_{\lambda_0}^2$. Then, if $\frac N{2s}-\frac2{p-1}>0$, for any $a\in(0,b_0)$, by the mean value theorem, there exists some large $\lambda_a$,
such that the solution $u_a$ to \eqref{eq1} with $\lambda=\lambda_a$ satisfies $\displaystyle\int_{\R^N}w_\lambda^2=a$, and for such $a$, we obtain a $k$-peak solution
for \eqref{eq2}, with $\mu_a=-\lambda_a$.

Similarly, if  $\frac N{2s}-\frac2{p-1}=0$, for any $a$ between $b_0$ and $(ka_*)^{\frac{p-1}2}$, while if $\frac N{2s}-\frac2{p-1}<0$, for any
 $a>b_0$ large enough,  there exists $\lambda_a$,
such that the solution $u_a$ to \eqref{eq1} with $\lambda=\lambda_a$ satisfies $\displaystyle\int_{\R^N}w_\lambda^2=a$, and for such $a$, we obtain a $k$-peak solution
for \eqref{eq2}, with $\mu_a=-\lambda_a$.

\end{proof}

\subsection{Clustering peak solutions}$\,$  \vskip 0.2cm

In the end of this section, we are concerned with the clustering multi-peak solutions to problem \eqref{eq1}--\eqref{eqnorm}, and sketch the proof of Theorem \ref{th3'}.

\vskip 0.2cm

The function $\Delta V(x)|_{x\in \Gamma_{i}}$ has a minimum point and a maximum point.  Let us assume that  the function $\Delta V(x)|_{x\in\Gamma_{i_0}}$ has isolated maximum point $b\in\Gamma_{i_0}$, that is  $\Delta V(x)<\Delta V(b)$
for all $x\in\Gamma_{i_0}\cap(B_\delta(b)\setminus b)$. Let $x_{\eta,j}\rightarrow b, j=1,\ldots,k$, $\frac{|x_{\eta,i}-x_{\eta,j}|}\eta\rightarrow+\infty, i\neq j$
as $\eta\rightarrow 0$.
We take $\displaystyle\sum_{j=1}^kU_{\eta,x_{\eta,j}}$ as the approximate solution of  \eqref{eq2'} and sufficed to show the following result.

\begin{Prop}
Assume  \textup{($V$)} and $\frac{\partial^2V(x)}{\partial\nu_{i_0}^2}\neq0$ for any $x\in\Gamma_{i_0}$ with some $i_0\in\{1,\ldots,k\}$.
If $b\in \Gamma_{i_0}$ is an isolated maximum point of $ \Delta V(x)|_{x\in \Gamma_{i_0}}$ on $\Gamma_{i_0}$, then
  for any integer $k>0$, there exists an $\eta_0>0$, such that for any $\eta\in (0, \eta_0]$,  problem \eqref{eq2'} has a solution $w_\eta$ of the form
\begin{align*}
w_\eta (x) =\sum_{j=1}^kU_{\eta,x_{\eta,j}} +\omega_\eta,
\end{align*}
where  $x_{\eta, j}\to b$, $j=1,\cdots, k$,  $\frac{|x_{\eta, i}- x_{\eta, j}|}\eta \to +\infty$, $i\neq j$,  and $\displaystyle\int_{\mathbb R^N} \bigl(
\eta^{2s} |(-\Delta)^{\frac s2} \omega_\eta|^2 + \omega_\eta^2\bigr)= o(\eta^{\frac N{2}})$ as $\eta\to 0$.
\end{Prop}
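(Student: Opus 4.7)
The plan is to adapt the Lyapunov--Schmidt reduction of Proposition \ref{propeta} to the clustering regime where all peak locations collapse to $b$ while remaining well-separated on the $\eta$-scale, and then to produce the correct peak configuration by a variational analysis of the reduced functional that exploits the fact that $b$ is an isolated maximum of $\Delta V|_{\Gamma_{i_0}}$.

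First I would fix sequences $r_\eta \to 0$ and $R_\eta \to \infty$ (slowly) and work with the admissible configuration space
\[
\Lambda_\eta = \big\{z=(z_1,\ldots,z_k):\ z_j \in B_{r_\eta}(b),\ d(z_j,\Gamma_{i_0}) \le C\eta,\ |z_i-z_j| \ge R_\eta\,\eta\text{ for }i\neq j\big\}.
\]
The same contraction argument used for Proposition \ref{propeta}, combined with the $U$--$U$ interaction estimates already performed in the proof of Lemma \ref{lemla}, produces for each $z \in \Lambda_\eta$ a unique $\omega_{\eta,z} \in F_{\eta,z}$ satisfying $\|\omega_{\eta,z}\|_\eta = o(\eta^{N/2})$; the new interaction contribution to this error, of order $\eta^{N/2}\sum_{i\neq j}(\eta/|z_i-z_j|)^{N+2s}$, is harmless precisely because $R_\eta \to \infty$.

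Next, one reduces to finding a critical point of $M_\eta(z) = I_\eta(w_{\eta,z})$ on $\Lambda_\eta$, where $w_{\eta,z}=\sum_j U_{\eta,z_j}+\omega_{\eta,z}$ and $I_\eta$ is the energy associated to \eqref{eq2'}. Decomposing $z_j = \bar z_j + \alpha_j\nu_{i_0}(\bar z_j)$ with $\bar z_j \in \Gamma_{i_0}$, expanding the potential term $\eta^{2s}\int V(x) U_{\eta,z_j}^2$ as in Section 3, and using assumption $(V)$ together with $V \equiv V_{i_0}$ on $\Gamma_{i_0}$, the $z$-dependent part of $M_\eta$ takes the form
\begin{align*}
\eta^{N+2s}\bigg[\frac{a_*}{2}\sum_{j=1}^k\frac{\partial^2 V}{\partial\nu_{i_0}^2}(\bar z_j)\alpha_j^2 + \frac{B}{2}\eta^2\sum_{j=1}^k\bigl(\Delta V(\bar z_j)-\Delta V(b)\bigr) - c_I\sum_{i\neq j}\frac{\eta^{2s}}{|z_i-z_j|^{N+2s}}\bigg] + \text{l.o.t.},
\end{align*}
with $B$ as in \eqref{B} and an explicit $c_I > 0$. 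The non-degeneracy of $\partial^2 V/\partial\nu_{i_0}^2$ pins each $\alpha_j$ at $O(\eta)$, so the remaining effective finite-dimensional problem on $\Gamma_{i_0}^k$ is a balance between the \emph{attractive} tangential potential $\Delta V$ (which is maximized at $b$) and the \emph{repulsive} peak--peak interaction.

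Finally, I would locate the critical configuration by a min--max (or Brouwer-degree) argument on this effective functional. After the rescaling $\bar z_j = b + \eta^\sigma\xi_j$ with $\sigma \in (0,1)$ chosen so that the $\Delta V$-term and the interaction term are of comparable order, the leading reduced problem becomes a finite-dimensional variational problem in $(\xi_1,\ldots,\xi_k)$ with unbounded diagonal repulsion and strict concavity inherited from the isolated maximum at $b$. The main obstacle will be the boundary analysis on $\Lambda_\eta$: one must rule out both the collision regime $|z_i-z_j|/\eta \downarrow R_\eta$, which is excluded by the divergent repulsive interaction, and the escape of some $\bar z_j$ from $B_{r_\eta}(b)$, which is prevented by the strict isolated-maximum hypothesis on $\Delta V|_{\Gamma_{i_0}}$. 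The fractional (algebraic) decay of $U$ makes the peak--peak interaction integrals only marginally convergent, so all error terms require careful book-keeping, paralleling the estimates of Sections 3 and 4.
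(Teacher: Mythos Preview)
Your strategy is the paper's: Lyapunov--Schmidt reduction followed by a variational analysis of the reduced energy $K(z)=I\bigl(\sum_j U_{\eta,z_j}+\omega_{\eta,z}\bigr)$, whose $z$-dependent part consists of the three pieces you identify (normal quadratic, tangential $\Delta V$, peak--peak interaction). Two points need correction or sharpening.

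First, the interaction term in your display has the wrong order. The leading peak--peak contribution to $I$ is $-c_0\,\eta^{N}\bigl(\eta/|z_i-z_j|\bigr)^{N+2s}$, i.e.\ of size $\eta^{2N+2s}/|z_i-z_j|^{N+2s}$, not $\eta^{N+4s}/|z_i-z_j|^{N+2s}$ as your bracket $\eta^{N+2s}[\,\cdots - c_I\eta^{2s}/|z_i-z_j|^{N+2s}\,]$ implies. This changes the balance with the $\Delta V$-variation $E_3\eta^{N+2s+2}\sum_j\Delta V(\bar z_j)$ and hence the admissible separation; the paper works on $S_{\eta,k}=\{\,|z_i-z_j|>\eta^{\sigma}\,\}$ with $\sigma\in(\tfrac12,\tfrac{N-1}{N+2s})$ so that both $\sigma>\tfrac12$ and $(1-\sigma)(N+2s)>2s+1$ hold.

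Second, your phrase ``min--max (or Brouwer-degree)'' conceals a genuine dichotomy that the paper makes explicit. The normal term $E_2\eta^{N+2s}\sum_j(\partial^2V/\partial\nu_{i_0}^2)(\bar z_j)\,r_j^2$ is concave in $r_j$ only when $\Gamma_{i_0}$ is a local \emph{maximum} set of $V$ (i.e.\ $\partial^2V/\partial\nu_{i_0}^2<0$); in that case all three pieces favour an interior maximum of $K$ on $S_{\eta,k}$, and your boundary analysis (repulsion prevents collision, the isolated maximum of $\Delta V$ prevents escape) is exactly what is needed. When $\Gamma_{i_0}$ is a local \emph{minimum} set, however, $\partial^2V/\partial\nu_{i_0}^2>0$, the normal direction becomes a minimum while the tangential and interaction pieces still prefer a maximum, so the sought critical point is a saddle of $K$; here direct optimisation fails and the paper invokes a topological linking argument \`a la \cite{DY10,DY11}. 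Your outline only really treats the first case, and ``strict concavity inherited from the isolated maximum at $b$'' overstates the hypothesis---only isolation of the maximum of $\Delta V|_{\Gamma_{i_0}}$ is assumed, not non-degeneracy.
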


\begin{proof}
Define the energy functional
\[
I(u) = \frac12 \int_{\mathbb R^N} \Bigl( \eta^{2s} |(-\Delta)^{\frac s2} u|^2  +
\big(1+ \eta^{2s}V(x) \big)u^2\Bigr) -\frac1{p+1} \int_{\mathbb R^N} u^{p+1}.
\]
We obtain the energy expansion:

\begin{equation}\label{I}
\begin{split}
&I\Bigl(  \sum_{j=1}^kU_{\eta,x_{\eta,j}} \Bigr)
\\=& (1+\eta^{2s}V_{i_0})^{-\frac N{2s}}\Big(k  E_1 \eta^N + E_2 \eta^{N+2s}\sum_{j=1}^k  \frac{\partial^2 V(\bar x_{\eta,j})}{\partial \nu_{i,j}^2} r_{\eta,j}^2+E_3\eta^{N+2s+2} \sum_{j=1}^k \Delta V(\bar x_{\eta,j})\\
& - \sum_{j>m}(c_0+o(1))\eta^{N} \Bigl( \frac{\eta}{ |x_{\eta,m} -x_{\eta,j}|}
 \Bigr)^{N+2s}\\&
+O\Big(\eta^{N+2s+3}+\eta^{N}\sum_{j>m} \bigl( \frac{\eta}{ |x_{\eta,m} -x_{\eta,j}|}
 \bigr)^{N+2s+\tau}+\eta^{N+2s}r_{\eta,j}^3)\Big),
 \end{split}
\end{equation}
where $E_l,c_0>0,l=2,3$, $$E_1= (\frac12-\frac1{p+1})
\displaystyle\int_{\mathbb R^N}U^{p+1}>0~~\mbox{and}~~r_{\eta,j}= |x_{\eta,j}-\bar x_{\eta,j}|,$$
with $\bar x_{\eta,j}\in \Gamma_{i_0}$ the point such that $|x_{\eta,j}-\bar x_{\eta,j}|= d(x_{\eta,j}, \Gamma_{i_0})$.

\vskip 0.1cm

In fact, it is easy to get
\begin{equation}\label{I'}
\begin{split}
&I\Bigl(  \sum_{j=1}^kU_{\eta,x_{\eta,j}} \Bigr)
\\=& (1+\eta^{2s}V_{i_0})^{-\frac N{2s}}\Big(k (\frac12-\frac1{p+1})
\displaystyle\int_{\mathbb R^N}U^{p+1}+ E_2' \eta^{N+2s}\sum_{j=1}^k\big(V(x_{\eta,j})-V(\bar x_{\eta,j})\big)\\&+E_3'\eta^{N+2s+2} \sum_{j=1}^k \Delta V( x_{\eta,j}) - \sum_{j>m}\big(c_0+o(1)\big)\eta^{N} \Bigl( \frac{\eta}{ |x_{\eta,m} -x_{\eta,j}|}
 \Bigr)^{N+2s}\\&
+O(\eta^{N+2s+3}+\eta^{N}\sum_{j>m} \Bigl( \frac{\eta}{ |x_{\eta,m} -x_{\eta,j}|}
 \Bigr)^{N+2s+\tau})\Big),
 \end{split}
\end{equation}
and
\begin{equation}\label{c33-3-1}
 V(x_{\eta,j}) = 1 +\frac{\partial^2 V(\bar x_{\eta,j})}{\partial \nu_{i,j}^2} r_{\eta,j}^2 + O(r_{\eta,j}^3) ~
\mbox{and}~
\Delta V(x_{\eta,j})=\Delta V(\bar x_{\eta,j})+ O(r_{\eta,j}).
\end{equation}
So,  \eqref{I}  follows from  \eqref{I'} and \eqref{c33-3-1}.

\vskip 0.1cm

To obtain a solution $w_{\eta}$ of the form  $\displaystyle\sum_{j=1}^kU_{\eta,x_{\eta,j}} +\omega_{\eta }$,   we can first carry out the reduction argument as in Proposition~\ref{propeta} to obtain $\omega_{\eta }$, satisfying

\begin{equation}\label{clustererr}
\|\omega_{\eta}\|_\eta
=O\Big(\eta^{\frac N2+2s+1}+\eta^{\frac N2}\sum_{i\neq j}\big(\frac\eta{|x_{\eta,i}-x_{\eta,j}|}\big)^{\min\{\frac p2,1\}(N+2s)}\Big).
\end{equation}
  Define

\[
K(x_{\eta,1},\cdots, x_{\eta,k})=I\bigl(U_{\eta,x_{\eta,j}} +\omega_{\eta}\bigr).
\]
From \eqref{clustererr}, we get the same expansion \eqref{I}  for $K(x_{\eta,1},\cdots, x_{\eta,k})$.
Now we set
\[
\mathcal B= \bigl\{\bigl( r, \bar x):  r\in (-\delta \eta^{\frac12}, \delta \eta^{\frac12}),\; \bar x\in  B_\delta(b_i)\cap \Gamma_i\bigr\}.
\]

If $\Gamma_{i_0}$ is a local maximum set of $V(x)$  and $\frac{\partial^2 V(\bar x)}{\partial \nu_{i_0}^2}<0$ for any $\bar x\in \Gamma_{i_0}$, then it is easy to
prove that $K(x_{\eta,1},\cdots, x_{\eta,k})$
has a critical point, which is a maximum point of $K$ in
$$
S_{\eta, k}:=\Bigl\{ (x_{\eta,1},\cdots, x_{\eta,k}):  x_{\eta,j}\in \mathcal B,~~
\frac{ |x_{\eta, j}-x_{\eta, m}|}{\eta}> \eta^{\sigma-1},\, m\ne j\Bigr\},
 $$  where   $\sigma\in(\frac12,\frac{N-1}{N+2s})$ is some constant 
 such that both $\sigma>\frac12$ and $(1-\sigma)(N+2s)>2s+1$ hold.

\vskip 0.1cm

If $\Gamma_{i_0}$ is a local minimum set of $V(x)$  and $\frac{\partial^2 V(\bar x)}{\partial \nu_{i_0}^2}>0$ for any $\bar x\in \Gamma_{i_0}$, then  $$E_2 \frac{\partial^2 V(\bar x)}{\partial \nu_{i_0}^2}r^2+  E_3 \eta^{2}  \Delta V(\bar x)$$ has
a saddle point $(0, b)$ in $\mathcal B$.  We can use a topological argument as in \cite{DY10,DY11} to prove that   $K(x_{\eta,1},\cdots, x_{\eta,k})$
has a critical point  in $S_{\eta, k}$.

\end{proof}

\section{Local Uniqueness}

\subsection{Precise estimates by Pohozaev identities}$\,$  \vskip 0.2cm

From Lemma \ref{lemla}, we know that a $k$-peak solution $v_a$ to equation \eqref{eqv} can be written as
\begin{align*}
v_a(x)=\sum_{i=1}^kU_{\varepsilon,x_{a,i}}(x)+\varphi_a(x),
\end{align*}
where $|x_{a,i}-b_i|=o(1), d(x_{a,i},\Gamma_i)=O(\varepsilon), \varepsilon=(-\mu_a)^{-\frac1{2s}}$,
$$U_{\varepsilon,x_{a,i}}(x)=\big(1+\varepsilon^{2s}V_i\big)^{\frac1{p-1}}U
\Big(\frac{(1+\varepsilon^{2s}V_i)^{\frac1{2s}}(x-x_{a,i})}{\varepsilon}\Big),$$
 $\varphi_a\in \displaystyle\bigcap_{i=1}^k E_{a,x_{a,i}}$. For $N\geq 2s+2$, it holds that $\frac p2(N+2s)>2s+1$, which implies then
\begin{align}\label{phia}
\|\varphi_a\|_a=\|\varphi_a\|_\varepsilon=&O\Big( |\sum_{i=1}^k\big(V(x_{a,i})-V_i)\big)|\varepsilon^{\frac N2+2s}+|\sum_{i=1}^k\nabla V(x_{a,i})|\varepsilon^{\frac N2+2s+1}+\varepsilon^{\frac N2+2s+1}\Big).
\end{align}
Moreover, $x_{a,i}\in\Gamma_{t_a,i}$ with some $t_a\rightarrow V_i$. Denote $\nu_{a,i}$ and $\tau_{a,i,j}$ as the unit normal vector
and the principal direction of $\Gamma_{t_a,i}$ at $x_{a,i}$ respectively.
Then at $x_{a,i}$, there hold that
$$D_{\tau_{a,i,j}}V(x_{a,i})=0,\ \ |\nabla V(x_{a,i})|=|D_{\nu_{a,i}}V(x_{a,i})|.$$

\begin{Lem}
Under the assumption (V), it holds that
\begin{align*}
D_{\nu_{a,i}}V(x_{a,i})=O\Big(\varepsilon^{\min\{2,2s+1\}}\Big).
\end{align*}

\end{Lem}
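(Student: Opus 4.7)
The plan is to project the Pohozaev identity \eqref{poh2} onto the normal direction at $x_{a,i}$, in the same spirit as Lemma~\ref{lem4.6}. Since $x_{a,i}\in\Gamma_{t_a,i}$, the tangential derivatives of $V$ at $x_{a,i}$ vanish and therefore $\nabla V(x_{a,i}) = D_{\nu_{a,i}}V(x_{a,i})\,\nu_{a,i}$. Multiplying \eqref{poh2} by $(\nu_{a,i})_j$ and summing over $j$ gives
\begin{equation*}
\int_{B_\rho(x_{a,i})} D_{\nu_{a,i}} V(x)\,v_a^2(x)\,dx = O(\varepsilon^{2N}).
\end{equation*}
I would then substitute $v_a = U_{\varepsilon, x_{a,i}} + \sum_{j \neq i} U_{\varepsilon, x_{a,j}} + \varphi_a$ into the left-hand side and analyse each piece separately.

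The dominant contribution is $\int D_{\nu_{a,i}} V\cdot U_{\varepsilon,x_{a,i}}^2\,dx$. Taylor-expanding $D_{\nu_{a,i}}V$ around $x_{a,i}$ and rescaling by $y = (1+\varepsilon^{2s}V_i)^{1/(2s)}(x - x_{a,i})/\varepsilon$, the linear moment vanishes by radial symmetry of $U^2$, and what remains equals $a_*\,\varepsilon^N\, D_{\nu_{a,i}}V(x_{a,i}) + O(\varepsilon^{N+2})$. The inter-peak products contribute only $O(\varepsilon^{2N+2s})$: the algebraic decay of $U$ forces $U_{\varepsilon, x_{a,j}}(x) = O(\varepsilon^{N+2s})$ uniformly on $B_\rho(x_{a,i})$ whenever $j\neq i$, since $|x_{a,j}-x_{a,i}|\ge c>0$.

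The key cross term is $2\int D_{\nu_{a,i}}V\cdot U_{\varepsilon, x_{a,i}}\varphi_a\,dx$. By Cauchy--Schwarz it is at most $C\,\|U_{\varepsilon, x_{a,i}}\|_{L^2}\|\varphi_a\|_{L^2} = O(\varepsilon^{N/2})\,\|\varphi_a\|_{L^2}$. Since \eqref{phia} yields $\|\varphi_a\|_\varepsilon = O(\varepsilon^{N/2+2s+1})$ (using $|V(x_{a,i})-V_i|=O(\varepsilon^2)$, which itself follows from condition (V) and $d(x_{a,i},\Gamma_i)=O(\varepsilon)$ via a normal Taylor expansion, together with $|\nabla V(x_{a,i})|$ bounded), this cross term is $O(\varepsilon^{N+2s+1})$. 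The remaining $\varphi_a^2$ piece is of order $O(\varepsilon^{N+4s+2})$, lower.

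Collecting all contributions and dividing by $a_*\varepsilon^N$ gives
\begin{equation*}
D_{\nu_{a,i}}V(x_{a,i}) = O(\varepsilon^2) + O(\varepsilon^{2s+1}) = O\bigl(\varepsilon^{\min\{2,\,2s+1\}}\bigr),
\end{equation*}
as claimed. I expect the main subtlety to be balancing the two competing error scales: the quadratic Taylor remainder of $V$ contributes $\varepsilon^2$, while the $\varphi_a$ correction contributes $\varepsilon^{2s+1}$; the former dominates when $s\ge 1/2$ and the latter when $s<1/2$, which is exactly what produces the $\min\{2,2s+1\}$ exponent in the conclusion.
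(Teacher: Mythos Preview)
Your proposal is correct and follows essentially the same route as the paper: project the Pohozaev identity \eqref{poh2} onto $\nu_{a,i}$, expand $v_a^2$ around $U_{\varepsilon,x_{a,i}}$, Taylor-expand $D_{\nu_{a,i}}V$ about $x_{a,i}$ so that the leading contribution is $a_*\varepsilon^N D_{\nu_{a,i}}V(x_{a,i})+O(\varepsilon^{N+2})$, and control the $\varphi_a$ cross term via \eqref{phia}, yielding the two competing errors $\varepsilon^2$ and $\varepsilon^{2s+1}$. The paper's computation is arranged slightly differently (it first isolates $\int D_{\nu_{a,i}}V\,U_{\varepsilon,x_{a,i}}^2$ in \eqref{5.4}--\eqref{5.5} and keeps the next-order term $\tfrac{B}{2}\varepsilon^{N+2}\Delta D_{\nu_{a,i}}V(x_{a,i})$ explicitly for later use), but the logic and estimates are the same as yours.
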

\begin{proof}
Using \eqref{poh2}, we obtain that
\begin{align*}
\int_{B_\rho(x_{a,i})}D_{\nu_{a,i}} V(x)v_a^2
=O(\varepsilon^{2N}).
\end{align*}

On the one hand,
\begin{equation}\label{5.4}\begin{split}
&\int_{B_\rho(x_{a,i})}D_{\nu_{a,i}} V(x)U_{\varepsilon,x_{a,i}}^2(x) \\
&=-2\int_{B_\rho(x_{a,i})}D_{\nu_{a,i}} V(x)U_{\varepsilon,x_{a,i}}(x)\varphi_{a}-\int_{B_\rho(x_{a,i})}D_{\nu_{a,i}} V(x)\varphi_{a}^2
+O(\varepsilon^{2N}) \\&
=O\big(\varepsilon^{\frac N2}|D_{\nu_{a,i}} V(x_{a,i})|\|\varphi_{a}\|_a+\varepsilon^{\frac N2+1}\|\varphi_{a}\|_a+\|\varphi_{a}\|_a^2\big)+O(\varepsilon^{2N}) \\&
=O\Big(\sum_{i=1}^k|V(x_{a,i})-V_i|\varepsilon^{N+2s}+|\sum_{i=1}^k \nabla V(x_{a,i})|\varepsilon^{N+2s+1}+\varepsilon^{N+2s+2}\Big)=O(\varepsilon^{N+2s+1}).
\end{split}\end{equation}

On the other hand, using the definition \eqref{B}
\begin{equation}\label{5.5}\begin{split}
&\int_{B_\rho(x_{a,i})}D_{\nu_{a,i}} V(x)U_{\varepsilon,x_{a,i}}^2(x) \\&
=a_*\varepsilon^{N}(1+\varepsilon^{2s}V_i)^{\frac2{p-1}}D_{\nu_{a,i}} V(x_{a,i})+\frac B2\varepsilon^{N+2}\Delta D_{\nu_{a,i}}V(x_{a,i})+O(\varepsilon^{N+4}),
\end{split}\end{equation}
which, combined with \eqref{5.4}, gives that $$D_{\nu_{a,i}} V(x_{a,i})=O\big(\varepsilon^{\min\{2,2s+1\}}\big).$$

\end{proof}

Now we take $\bar x_{a,i}\in\Gamma_{i}$ be the point such that $x_{a,i}-\bar x_{a,i}=\alpha_{a,i}\nu_{a,i}$ for some $\alpha_{a,i}\in\R$.

 \begin{Lem}\label{lem5.2}
 Under the condition (V), we have that
 \begin{equation}\label{xb}
 \begin{cases}
  \bar x_{a,i}-b_i=L_i\varepsilon^{2}+O(\varepsilon^{2s+2}),\\[2mm]
  x_{a,i}-\bar x_{a,i}=-\frac B{2a_*}\Delta D_{\nu_{a,i}}V(b_i)\Big(\frac{\partial^2V(b_i)}{\partial \nu_i^2}\Big)^{-1}\varepsilon^{2}+
 O(\varepsilon^{2s+2}),
 \end{cases}
 \end{equation}
where B is as in \eqref{B}, $L_{i}$ is a vector depending on $b_{i}$ and $i=1,...,k.$
 \end{Lem}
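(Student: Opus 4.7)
The plan is to derive both identities by combining the normal and tangential Pohozaev identities developed in Section~4 with an iterative sharpening of the perturbation estimate \eqref{phia}, then inverting the resulting linear systems through the non-degeneracy hypotheses of Theorem~\ref{th4}.

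First I would sharpen the normal displacement. Combining \eqref{5.4} and \eqref{5.5}, after applying the previous lemma's rough bound to upgrade $\|\varphi_a\|_\varepsilon$, one gets
\[
a_*(1+\varepsilon^{2s}V_i)^{\frac{2}{p-1}}\,D_{\nu_{a,i}}V(x_{a,i})+\tfrac{B}{2}\varepsilon^{2}\,\Delta D_{\nu_{a,i}}V(x_{a,i})=O(\varepsilon^{2s+2}).
\]
Since $V\equiv V_i$ on $\Gamma_i$ forces $\nabla V(\bar x_{a,i})=0$, a Taylor expansion along $\nu_{a,i}$ at $\bar x_{a,i}$ yields $D_{\nu_{a,i}}V(x_{a,i})=\frac{\partial^2V(\bar x_{a,i})}{\partial\nu_{a,i}^2}\alpha_{a,i}+O(\alpha_{a,i}^2)$, so the non-degeneracy $\frac{\partial^2V(b_i)}{\partial\nu_i^2}\neq 0$ lets us solve
\[
\alpha_{a,i}=-\tfrac{B}{2a_*}\,\Delta D_{\nu_{a,i}}V(b_i)\,\Bigl(\tfrac{\partial^2V(b_i)}{\partial\nu_i^2}\Bigr)^{-1}\varepsilon^{2}+O(\varepsilon^{2s+2}).
\]
Because $x_{a,i}-\bar x_{a,i}=\alpha_{a,i}\nu_{a,i}$ and the angle between $\nu_{a,i}$ and the $\Gamma_i$-normal at $\bar x_{a,i}$ is of order $\varepsilon^2$, this is the second identity in \eqref{xb}.

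Second, to locate $\bar x_{a,i}-b_i$ inside $\Gamma_i$, I would apply the tangential Pohozaev identity along each principal direction $\tau_{a,i,j}$, mirroring the scheme of Lemma~\ref{lem4.7}. With $H(x)=\langle\nabla V(x),\tau_{a,i,j}\rangle$, the vanishing $H(x_{a,i})=0$ and the expansion
\[
\int_{B_\rho(x_{a,i})}H(x)U_{\varepsilon,x_{a,i}}^{2}=\tfrac{B}{2}\varepsilon^{N+2}\Delta H(x_{a,i})+O(\varepsilon^{N+4}),
\]
together with the upgraded bound $\|\varphi_a\|_\varepsilon=O(\varepsilon^{N/2+2s+2})$ obtained by reinserting $|V(x_{a,i})-V_i|=O(\alpha_{a,i}^2)$ and $|\nabla V(x_{a,i})|=O(\varepsilon^2)$ into \eqref{phia}, deliver $(D_{\tau_{a,i,j}}\Delta V)(x_{a,i})=O(\varepsilon^{2s+2})$. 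Replacing the moving tangent $\tau_{a,i,j}$ by the fixed tangent $\tau_{i,j,0}$ at $b_i$ introduces an $O(\alpha_{a,i})$ angular correction whose Weingarten expansion contributes exactly the principal-curvature terms $\frac{\partial\Delta V}{\partial\nu_i}(b_i)\kappa_{i,j}$. Expanding $D_{\tau_{i,j,0}}\Delta V$ about $b_i$ and using \eqref{necess} then produces a linear system in $\bar x_{a,i}-b_i$ whose coefficient matrix is precisely
\[
\Bigl(\tfrac{\partial^{2}\Delta V(b_i)}{\partial\tau_{i,l}\partial\tau_{i,j}}\Bigr)_{l,j}+\tfrac{\partial\Delta V(b_i)}{\partial\nu_i}\,\mathrm{diag}(\kappa_{i,1},\ldots,\kappa_{i,N-1}).
\]
The non-singularity assumed in Theorem~\ref{th4} then inverts this system and yields $\bar x_{a,i}-b_i=L_i\varepsilon^{2}+O(\varepsilon^{2s+2})$, with $L_i$ an explicit vector built from derivatives of $\Delta V$ at $b_i$ and from the $\alpha_{a,i}$ just computed.

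The principal difficulty I anticipate lies in the simultaneous bootstrap, because the tangential and normal Pohozaev identities couple through \eqref{phia}, whose right-hand side itself depends on $|V(x_{a,i})-V_i|$, $|\nabla V(x_{a,i})|$ and the separations $|x_{a,j}-x_{a,i}|$. One must first use the rough bounds of the previous lemma to upgrade $\|\varphi_a\|_\varepsilon$ by an order in $\varepsilon$ before either identity can deliver the target $O(\varepsilon^{2s+2})$ precision. Extra bookkeeping is also needed because $\nu_{a,i}$ is only close to, not identical with, the normal to $\Gamma_i$ at $\bar x_{a,i}$, so every Taylor expansion must track the $O(\alpha_{a,i})$ angular discrepancy through the Weingarten map of $\Gamma_i$.
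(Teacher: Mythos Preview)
Your treatment of the normal displacement matches the paper's and is fine. The tangential part, however, has a genuine gap. Your Taylor expansion $\int_{B_\rho(x_{a,i})}H\,U_{\varepsilon,x_{a,i}}^{2}=\frac{B}{2}\varepsilon^{N+2}\Delta H(x_{a,i})+O(\varepsilon^{N+4})$ stops one order too early: combined with the cross-term bound $\int H U_{\varepsilon,x_{a,i}}^{2}=O(\varepsilon^{N+2s+4})$ it only gives $(D_{\tau_{a,i,j}}\Delta V)(x_{a,i})=O(\varepsilon^{2})$, not the $O(\varepsilon^{2s+2})$ you claim. In fact $(D_{\tau_{a,i,j}}\Delta V)(x_{a,i})$ is genuinely of order $\varepsilon^{2}$, and to identify its leading coefficient (hence the vector $L_i$) the paper expands one step further,
\[
\int_{B_\rho(x_{a,i})}H\,U_{\varepsilon,x_{a,i}}^{2}=\tfrac{B}{2}\varepsilon^{N+2}(1+V_i\varepsilon^{2s})(D_{\tau_{a,i}}\Delta V)(x_{a,i})+\tfrac{H_{\tau_i}}{24}\varepsilon^{N+4}+O(\varepsilon^{N+2s+4}),
\]
with $H_{\tau_i}=\sum_{l,m}\partial^{4}_{x_l^2x_m^2}H(b_i)\int x_l^2x_m^2U^2$; this yields $(D_{\tau_{a,i}}\Delta V)(x_{a,i})=-\frac{H_{\tau_i}}{12B}\varepsilon^{2}+O(\varepsilon^{2s+2})$. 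Without this fourth-order term you cannot resolve $L_i$.

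A second, smaller point: the linear system in $\bar x_{a,i}-b_i$ does not have the curvature-augmented matrix of Theorem~\ref{th4} as its coefficient. The Weingarten correction you invoke when swapping $\tau_{a,i,j}$ for the tangent at $\bar x_{a,i}$ is of size $O(\alpha_{a,i})=O(\varepsilon^{2})$ and therefore goes to the right-hand side (absorbed into $L_i$), not into the matrix acting on $\bar x_{a,i}-b_i$. The paper inverts only the tangential Hessian $\bigl(\frac{\partial^{2}\Delta V(b_i)}{\partial\tau_{i,l}\partial\tau_{i,j}}\bigr)_{l,j}$, using the non-degeneracy defined before Theorem~\ref{th3}; the matrix from Theorem~\ref{th4} appears only later, in Lemma~\ref{lemgamma2}.
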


 \begin{proof}
 From \eqref{5.5}, and
 \begin{align*}
&\int_{B_\rho(x_{a,i})}D_{\nu_{a,i}} V(x)U_{\varepsilon,x_{a,i}}^2(x)\nonumber\\
&=-2\int_{B_\rho(x_{a,i})}D_{\nu_{a,i}} V(x)U_{\varepsilon,x_{a,i}}(x)\varphi_{a}-\int_{B_\rho(x_{a,i})}D_{\nu_{a,i}} V(x)\varphi_{a}^2
+O(\varepsilon^{2N})\nonumber\\&
=O\Big(\varepsilon^{\frac N2}|D_{\nu_{a,i}} V(x_{a,i})|\cdot\|\varphi_{a}\|_a+\varepsilon^{\frac N2+1}\|\varphi_{a}\|_a+\|\varphi_{a}\|_a^2\Big)+O(\varepsilon^{2N})\nonumber\\&
=O\big(\varepsilon^{N+2s+2}\big),
\end{align*}
we obtain that
 \begin{align*}
\big(a_*+ &O(\varepsilon^{2s})\big)D_{\nu_{a,i}} V(x_{a,i})+\frac B2\varepsilon^{2}\Delta D_{\nu_{a,i}}V(x_{a,i})
\\ =&O\Big(\varepsilon^{2s+2}+\varepsilon^{2s+1}\sum_{i=1}^k|V(x_{a,i})-V_i|\Big)\\=&
O\Big(\varepsilon^{2s+2}+\varepsilon^{2s+1}\sum_{i=1}^k|x_{a,i}-\bar x_{a,i}|^2\Big).
 \end{align*}

 Since $\frac{\partial^2V(b_i)}{\partial \nu_i^2}\neq0$, the outward vector $\nu_{a,i}(x)$ and the tangential unit vector $\tau_{a,i}(x)$ of $\Gamma_{a,i}$ at
 $x_{a,i}$ are Lip-continuous in $W_{\delta,i}$, we obtain from the above that
 \begin{align}\label{x-x}
 x_{a,i}-\bar x_{a,i}=-\frac B{2a_*}\Delta D_{\nu_{a,i}}V(b_i)\Big(\frac{\partial^2V(b_i)}{\partial \nu_i^2}\Big)^{-1}\varepsilon^{2}+
 O\Big(\varepsilon^{2s+2}+\varepsilon^{2s+1}\sum_{i=1}^k|x_{a,i}-\bar x_{a,i}|^2\Big).
 \end{align}
From \eqref{phia} and \eqref{x-x}, for  $\frac{4s+4}{N+2s}<p<\frac{N+2s}{N-2s}$ (which can be satisfied by $N\geq 2s+4$),
\begin{align}\label{varphi}
\|\varphi_a\|_a&=O\Big(\sum_{i=1}^k|x_{a,i}-\bar x_{a,i}|\varepsilon^{\frac N2+2s}+\varepsilon^{\frac N2+4s+2}+\varepsilon^{\frac N2+2s+2}\Big)
=O\big(\varepsilon^{\frac N2+2s+2}\big).
\end{align}
Let $H(x)=\langle\nabla V(x),\tau_{a,i}\rangle$.
Then for $N\geq2s+4$,
\begin{align}\label{5.11}
&\int_{B_\rho(x_{a,i})}H(x)U_{\varepsilon,x_{a,i}}^2(x)\nonumber\\
&=-2\int_{B_\rho(x_{a,i})}H(x)U_{\varepsilon,x_{a,i}}(x)\varphi_{a}-\int_{B_\rho(x_{a,i})}H(x) \varphi_a^2\nonumber\\&
=-2\int_{B_\rho(x_{a,i})}H(x)U_{\varepsilon,x_{a,i}}(x)\varphi_{a}+O(\|\varphi_a\|_a^2)+O(\varepsilon^{2N})\nonumber\\&
=-2\int_{B_\rho(x_{a,i})}\langle\nabla H(x_{a,i}),x-x_{a,i}\rangle U_{\varepsilon,x_{a,i}}(x)\varphi_{a}+O(\varepsilon^{N+2s+4})
\end{align}

On the one hand,  since $\nabla V=0$, for $x\in\Gamma_i$,
\begin{align}\label{5.12}
\nabla H(x_{a,i})=\big\langle\nabla^2V(x_{a,i}),\tau_{a,i}\big\rangle=\big\langle\nabla^2V(\bar x_{a,i}),\bar\tau_{a,i}\big\rangle+O\big(|x_{a,i}-\bar x_{a,i}|\big)
=O\big(|x_{a,i}-\bar x_{a,i}|\big),
\end{align}
where $\bar x_{a,i}\in\Gamma_i$ is the point such that $ x_{a,i}-\bar  x_{a,i}=\beta_{a,i}\nu_{a,i}$ for some $\beta_{a,i}\in\R$ and $\bar\tau_{a,i}$ is the tangential
vector of $\Gamma_i$ at $\bar x_{a,i}\in\Gamma_i$. Hence,
\begin{align*}
&\int_{B_\rho(x_{a,i})}\langle\nabla H(x_{a,i}),x-x_{a,i}\rangle U_{\varepsilon,x_{a,i}}(x)\varphi_{a}\\
&=O\big(\varepsilon^{\frac N2+1}|\nabla H(x_{a,i})|\cdot \|\varphi_{a}\|_a\big)=O\big(\varepsilon^{N+2s+3}|x_{a,i}-\bar x_{a,i}|\big)=O\big(\varepsilon^{N+2s+5}\big).
\end{align*}
Then by \eqref{5.11} and \eqref{5.12}, it follows
\begin{align}\label{5.13}
&\int_{B_\rho(x_{a,i})}H(x)U_{\varepsilon,x_{a,i}}^2(x)=O\big(\varepsilon^{N+2s+4}\big).
\end{align}

On the other hand,  applying the Taylor's expansion, we obtain that

\begin{equation}\label{5.14}\begin{split}
 \int_{B_\rho(x_{a,i})}H(x)U_{\varepsilon,x_{a,i}}^2(x)=&
\frac {B\varepsilon^{N+2}}2\big(1+V_i\varepsilon^{2s}\big)(D_{\tau_{a,i}}\Delta V)(x_{a,i})\\&+\frac{H_{\tau_i}\varepsilon^{N+4}}{24}+
O\big(\varepsilon^{N+2s+4}\big),
\end{split}\end{equation}
where $$H_{\tau_i}=\sum_{l=1}^2\sum_{m=1}^2\frac{\partial^4H(b_i)}{\partial x_l^2\partial x_m^2}\int_{\R^N}x_l^2x_m^2U^2.$$

Combining \eqref{5.13} and \eqref{5.14}, we obtain that
\begin{align}\label{5.15}
&(D_{\tau_{a,i}}\Delta V)(x_{a,i})=-\frac{H_{\tau_i}\varepsilon^{2}}{12B}+
O(\varepsilon^{2s+2}).
\end{align}
By \eqref{x-x}, we get
\begin{equation}\label{5.16a}\begin{split}
\big(D_{ \tau_{a,i}} \Delta\big) V(x_{a,i})&= \big(D_{\bar\tau_{a,i}} \Delta\big) V(\bar x_{a,i})+\langle A_{\tau_i},x_{a,i}-\bar x_{a,i}\rangle+O\big(|x_{a,i}-\bar x_{a,i}|^2\big) \\&=
(D_{\bar\tau_{a,i}} \Delta) V(\bar x_{a,i})+B_{\tau_i}\varepsilon^2+O\big(\varepsilon^{2s+2}\big),
\end{split}\end{equation}
where $A_{\tau_i}$ is a vector depending on $b_i$ and $B_i$ is a constant depending on $b_i$.
Moreover,
\begin{align}\label{5.16}
\big(D_{\bar\tau_{a,i}} \Delta\big) V(\bar x_{a,i})=\big((D^2_{ \tau_{a,i}} \Delta) V(b_i) \big)(\bar x_{a,i}-b_i)+O\big(|\bar x_{a,i}-b_i|^2\big).
\end{align}
Therefore, by \eqref{5.15}, \eqref{5.16a} and \eqref{5.16}, there hold that
\begin{align*}
\big((D^2_{ \tau_{a,i}} \Delta) V(b_i) (x_{a,i})\big)(\bar x_{a,i}-b_i)=-\big(\frac{H_{\tau_i}}{12B}+B_{\tau_{a,i}}\big)\varepsilon^{2}+
O\big(\varepsilon^{2s+2}\big)+O\big(|\bar x_{a,i}-b_i|^2\big).
\end{align*}
Since $\big(D^2_{ \tau_{a,i}} \Delta\big) V(b_i) $ is non-singular, we conclude the proof.
 \end{proof}

\smallskip

\subsection{Local uniqueness} $\,$  \vskip 0.2cm

\smallskip 
We assume $\frac{4s+4}{N+2s}<p<\frac{N+2s}{N-2s}$, which can be satisfied by $N\geq 2s+4$. Set
\begin{align*}
\delta_a=\begin{cases}\Big(\frac{a^{\frac2{p-1}}}{ka_*}\Big)^{\frac1{N-\frac{4s}{p-1}}},\,\,\,& p-1\neq\frac{4s}N,
\\[2mm]
\Big(\frac{2s}{(s+2)B_1}|ka_*-a^{\frac2{p-1}}|\Big)^{\frac1{2(s+1)}},\,\,\,& p-1=\frac{4s}N,
\end{cases}
\end{align*}where $B_1=\frac1N\displaystyle\sum_{i=1}^k\Delta V(b_i)\displaystyle\int_{\R^N}|x|^2U^2(x).$
\begin{Prop}
Under the assumption (V), there hold that
\begin{align}\label{pohdelta}
-\mu_a\delta_a^{2s}=1+\gamma_1\delta_a^{2s}+O(\delta_a^{2s+2}),
\end{align}
and
\begin{align}\label{pohxb}
x_{a,i}-b_i=\bar L_i\delta_a^{2}+O(\delta_a^{2s+2}),\ \ i=1,\ldots,k,
\end{align}
where $\gamma_1$ and the vector $\bar L_i$ are constants.
\end{Prop}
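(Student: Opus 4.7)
The plan is to combine the $L^2$-constraint expansion of Proposition~\ref{prop3.4} with the refined position information in Lemma~\ref{lem5.2}, and then to invert the resulting asymptotic relation to extract the leading behaviour of $-\mu_a$ in terms of $\delta_a$. Throughout I write $\varepsilon=(-\mu_a)^{-1/(2s)}$. Lemma~\ref{lem5.2} gives $x_{a,i}-\bar x_{a,i}=O(\varepsilon^{2})$ and $\bar x_{a,i}-b_i=L_i\varepsilon^{2}+O(\varepsilon^{2s+2})$, from which $V(x_{a,i})-V_i=O(\varepsilon^{4})$ (since $V\equiv V_i$ on $\Gamma_i$ and $\bar x_{a,i}\in\Gamma_i$) and $|\nabla V(x_{a,i})|=|D_{\nu_{a,i}}V(x_{a,i})|=O(\varepsilon^{2})$.

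When $p-1\neq\tfrac{4s}N$, set $\alpha=\tfrac{2}{p-1}-\tfrac N{2s}\neq 0$. Expanding $(V_i-\mu_a)^{\alpha}=(-\mu_a)^{\alpha}\bigl(1+\alpha V_i/(-\mu_a)+O((-\mu_a)^{-2})\bigr)$ in Proposition~\ref{prop3.4} and inserting the refined error bounds above, every error on the right-hand side becomes $O\bigl((-\mu_a)^{\alpha}\delta_a^{2s+2}\bigr)$ once divided by $ka_*(-\mu_a)^{\alpha}$; the standing assumption $N\ge 2s+4$ together with the range $\tfrac{4s+4}{N+2s}<p<\tfrac{N+2s}{N-2s}$ is used here to swallow both the cross-peak term $|x_{a,i}-x_{a,j}|^{-(N+2s)}$ and the $p$-dependent remainder. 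This yields
\[
\frac{a^{2/(p-1)}}{ka_*}=(-\mu_a)^{\alpha}\Bigl(1+c_{0}(-\mu_a)^{-1}+O(\delta_a^{2s+2})\Bigr)
\]
for an explicit constant $c_{0}$ depending on $\alpha$ and $\{V_i\}$; since the left-hand side equals $\delta_a^{-2s\alpha}$ by the definition of $\delta_a$, solving for $-\mu_a$ produces \eqref{pohdelta} with an appropriate $\gamma_1$.

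When $p-1=\tfrac{4s}N$, one has $\alpha=0$ and Proposition~\ref{prop3.4} degenerates to $a^{2/(p-1)}=ka_*+O(\text{lower order})$, which is insufficient. To recover $-\mu_a$ I would derive the mass-constraint-adapted Pohozaev identity obtained by testing the extension problem \eqref{eqexv} with the multiplier $\langle Y-X_{a,i},\nabla\tilde v_a\rangle+\tfrac{N-2s}{2}\tilde v_a$ on $\mathbf B_{\rho}^{+}(x_{a,i})$, summing over $i=1,\ldots,k$, and substituting the constraint $\int u_a^{2}=1$; this is the analogue of the commented-out identity earlier in the manuscript. Under the hypothesis $\sum_i\Delta V(b_i)\neq 0$ appearing in Theorem~\ref{th4}, the resulting formula $(-\mu_a)^{(s+1)/s}(ka_*-a^{2/(p-1)})=\mathrm{const}\cdot\sum_i\Delta V(b_i)\int_{\R^N}|x|^{2}U^{2}+o(1)$ combined with the definition of $\delta_a$ in this regime again gives \eqref{pohdelta}. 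Once \eqref{pohdelta} is known, $\varepsilon=\delta_a(1+O(\delta_a^{2s}))$, hence $\varepsilon^{2}=\delta_a^{2}+O(\delta_a^{2s+2})$ and $\varepsilon^{2s+2}=O(\delta_a^{2s+2})$; plugging these into $x_{a,i}-b_i=(x_{a,i}-\bar x_{a,i})+(\bar x_{a,i}-b_i)$ using the two parts of Lemma~\ref{lem5.2} yields \eqref{pohxb} with $\bar L_i=L_i-\tfrac{B}{2a_{*}}\Delta D_{\nu_i}V(b_i)\bigl(\tfrac{\partial^{2}V(b_i)}{\partial\nu_i^{2}}\bigr)^{-1}$.

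The main obstacle is the critical case, where Proposition~\ref{prop3.4} is vacuous and one must derive the mass-constraint Pohozaev identity directly through the harmonic extension. Controlling the boundary integrals on $\partial''\mathbf B_{\rho}^{+}(x_{a,i})$ with only the algebraic decay of $U$ provided by Lemma~\ref{lemA3}, and verifying that the cross-peak interactions are absorbed into $o(1)$ despite this slow decay, is exactly the feature distinguishing our nonlocal setting from the classical local case treated in \cite{LPY19}. A secondary but delicate bookkeeping step is to check, in the non-critical case, that all $O$-errors listed in Proposition~\ref{prop3.4} remain of order $\delta_a^{2s+2}$ (rather than a worse power) after the inversion, which is precisely where the hypotheses $N\ge 2s+4$ and $\tfrac{4s+4}{N+2s}<p<\tfrac{N+2s}{N-2s}$ enter decisively.
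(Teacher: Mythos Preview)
Your overall strategy matches the paper's proof: Proposition~\ref{prop3.4} plus Lemma~\ref{lem5.2} in the non-critical case, and the dilation-type Pohozaev identity through the Caffarelli--Silvestre extension in the mass-critical case, followed by inversion and substitution of $\varepsilon=\delta_a(1+O(\delta_a^{2s}))$ into Lemma~\ref{lem5.2}. The non-critical argument and the derivation of \eqref{pohxb} from \eqref{pohdelta} are both correct and essentially identical to the paper's.

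There is, however, a genuine precision gap in your treatment of the critical case $p-1=\tfrac{4s}N$. You write the outcome of the Pohozaev computation as
\[
(-\mu_a)^{(s+1)/s}\bigl(ka_*-a^{2/(p-1)}\bigr)=\text{const}\cdot\sum_i\Delta V(b_i)\int_{\R^N}|x|^{2}U^{2}+o(1),
\]
but an $o(1)$ remainder only yields $-\mu_a\delta_a^{2s}=1+o(1)$, which is strictly weaker than \eqref{pohdelta}. To recover the term $\gamma_1\delta_a^{2s}$ and the remainder $O(\delta_a^{2s+2})$ you must carry the next two orders in the Pohozaev identity. The paper does this by (i) expanding $\Delta V(x_{a,i})=\Delta V(b_i)+c_1(-\mu_a)^{-1/s}+O((-\mu_a)^{-1/s-1})$ via Lemma~\ref{lem5.2}, (ii) using $\|\varphi_a\|_a=O(\varepsilon^{N/2+2s+2})$ from \eqref{varphi} to control the cross terms in $\int u_a^{p+1}$, and (iii) tracking the $V_i$-contributions separately, arriving at
\[
s(-\mu_a)^{1+1/s}\bigl(ka_*-a^{2/(p-1)}\bigr)=\Bigl(\frac s2+1\Bigr)\sum_{i=1}^k\Delta V(b_i)\int_{\R^N}|x|^2U^2+c_3(-\mu_a)^{-1}+O\bigl((-\mu_a)^{-1/s-1}\bigr).
\]
Since $(-\mu_a)^{-1}\sim\delta_a^{2s}$ and $(-\mu_a)^{-1/s-1}\sim\delta_a^{2s+2}$, this sharper identity is exactly what produces $-\mu_a\delta_a^{2s}=1+\gamma_1\delta_a^{2s}+O(\delta_a^{2s+2})$. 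Your proposal identifies the correct tool but stops one order too early; the missing step is purely computational, not conceptual.
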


\begin{proof}
(a) Firstly,  if $p-1\neq\frac{4s}N$, we follow
 the proof of Proposition \ref{prop3.4} to find
\begin{align*}
\frac{a^{\frac2{p-1}}}{(-\mu_a)^{\frac2{p-1}-\frac N{2s}}}&=
\sum_{i=1}^k(1+\frac{V_i}{-\mu_a})^{\frac2{p-1}-\frac N{2s}}\int_{\R^N}U^2+
O(|\sum_{i=1}^k(V(x_{a,i})-V_i)|(-\mu_a)^{-1}\\&
\quad+|\sum_{i=1}^k\nabla V(x_{a,i})|(-\mu_a)^{-1-\frac1{2s}}+(-\mu_a)^{-1-\frac1s})
+\begin{cases}O((-\mu_a)^{-\frac{pN}{4s}-\frac p2}),\,&\text{ if}\,\, 1<p\leq2,\\O((-\mu_a)^{-\frac N{2s}-1}),\,&\text{ if}\,\, p>2,\end{cases}
\\
&\quad+O\Big((-\mu_{a})^{-{\frac{N}{2s}-1}}|x_{a,j}-x_{a,i}|^{-(N+2s)}\Big),
\end{align*}
which implies that
\begin{align*}
\frac{a^{\frac2{p-1}}}{ka_*(-\mu_a)^{\frac2{p-1}-\frac N{2s}}}&=1+
\sum_{i=1}^k(\frac2{p-1}-\frac N{2s})\frac{V_i}{-\mu_a}+
O\Big(|\sum_{i=1}^k(V(x_{a,i})-V_i)|(-\mu_a)^{-1}\\&
\quad+|\sum_{i=1}^k\nabla V(x_{a,i})|(-\mu_a)^{-1-\frac1{2s}}+(-\mu_a)^{-1-\frac1s}\Big)
+\begin{cases}O((-\mu_a)^{-\frac{pN}{4s}-\frac p2}), &\text{ if}\, 1<p\leq2,\\O((-\mu_a)^{-\frac N{2s}-1}),&\text{ if}\,p>2.\end{cases}
\\
&\quad+O\Big((-\mu_{a})^{-{\frac{N}{2s}-1}}|x_{a,j}-x_{a,i}|^{-(N+2s)}\Big).
\end{align*}
Set $\delta_a=(\frac{a^{\frac2{p-1}}}{ka_*})^{\frac1{N-\frac {4s}{p-1}}}.$ Then we obtain \eqref{pohdelta}.
Finally, \eqref{pohxb} can be obtained by  \eqref{pohdelta} and Lemma \ref{lem5.2}.

\medskip
(b) Secondly, we turn to deal with the case of  $p-1=\frac{4s}N$. From \eqref{xb}, it holds that
$$x_{a,i}-b_i=\bar L_i(-\mu_a)^{-\frac1s}+O((-\mu_a)^{-\frac1s-1}),$$ with some vector $\bar L_i$.
Moreover, in view of \eqref{varphi}, $\|\varphi_a\|_a=O(\varepsilon^{\frac N2+2s+2})$.

We know that $u_a$ is the $k$-peak solutions of \eqref{eq1}-\eqref{eqnorm},      and        
we have the following Pohozaev identity:
\begin{align*}
\begin{split}
&-\int_{\partial'' {\bf B}_\rho(x_{a,i} )}t^{1-2s}\frac{\partial\tilde u_a }{\partial\nu}\langle Y-X_{a,i},\nabla\tilde u_a \rangle
+\frac{1}2\int_{\partial'' {\bf B}_\rho(x_{a,i} )}t^{1-2s}|\nabla\tilde u_a |^2\langle Y-X_{a,i},\nu\rangle
\\
&-\frac{N-2s}2\int_{\partial'' {\bf B}_\rho(x_{a,i} )}t^{1-2s}\frac{\partial\tilde u_a }{\partial\nu}\tilde u_a \\&
-\int_{\partial B_\rho(x_{a,i} )}\Big(\frac{(u_a )^{p+1}}{p+1}-\frac{-\mu_a +V(x)}{2}(u_a )^{2}
\Big)\langle y-x_{a,i},\nu\rangle
\\
 =&\int_{B_\rho(x_{a,i} )}\Big(\frac{N-2s}2-\frac N{p+1}\Big)|u_a |^{p+1}+s(-\mu_a +V(x))(u_a )^{2}\\&
 +\frac{1}2\langle y-x_{a,i} ,\nabla V(x)\rangle
 (u_a )^2,
\end{split}
\end{align*}
which can be rewritten as
\begin{align}\label{poh3}
\begin{split}
 & \int_{B_\rho(x_{a,i} )}sV(x)(u_a )^{2}
 +\frac{1}2\langle y-x_{a,i} ,\nabla V(x)\rangle
 (u_a )^2\\
 &=\int_{B_\rho(x_{a,i} )}s\mu_a (u_a )^{2}+a\Big(\frac N{p+1}-\frac{N-2s}2\Big)|u_a |^{p+1}+ \int_{\partial'' \bf{B}_{\rho}(x_{a,i} )}W_1
 +\int_{\partial B_\rho(x_{a,i} )}W_1,
\end{split}
\end{align}
where
\begin{align*}
W_1 =-t^{1-2s}\frac{\partial\tilde u_a }{\partial\nu}\langle Y-X_{a,i},\nabla\tilde u_a \rangle
+\frac{1}2t^{1-2s}|\nabla\tilde u_a |^2\langle Y-X_{a,i},\nu\rangle
-\frac{N-2s}2t^{1-2s}\frac{\partial\tilde u_a }{\partial\nu}\tilde u_a ,
\end{align*}
\begin{align*}
W_2 =\Big(-\frac{a(u_a )^{p+1}}{ p+1}+\frac{(-\mu_a +V(x))}{2}(u_a )^{2}
\Big)\langle y-x_{a,i},\nu\rangle.
\end{align*}
Recall that
\begin{align*}
u_a(x)=\sum_{i=1}^k(\frac{-\mu_a+V_i}{a})^{\frac{1}{p-1}}\left(
U\big((-\mu_a+V_i)^{\frac1{2s}}(x-x_{a,i})\big)+\varphi_a\right).
\end{align*}
Then,
\begin{align*}
\begin{split}
 & \int_{B_\rho(x_{a,i} )}sV(x)(u_a )^{2}
 +\frac{1}2\langle y-x_{a,i} ,\nabla V(x)\rangle
 (u_a )^2\\
 &=(\frac{-\mu_a+V_i}{a})^{\frac2{p-1}}\int_{B_\rho(x_{a,i} )}\left(s(V(x)-V_i)
 +\frac{1}2\langle y-x_{a,i} ,\nabla V(x)\rangle\right)
 U^2((-\mu_a+V_i)^{\frac1{2s}}(x-x_{a,i}))\\&
 \quad+sV_i\int_{B_\rho(x_{a,i} )}u_a^2+O\big((-\mu_a)^{-2-\frac 2s}\big)\\
 &=\frac{\frac s2+1}{(-\mu_a+V_i)^{\frac1s}}\Delta V(x_{a,i})\int_{\R^N}|x|^2U^2(x)+sV_i\frac{a_*}{a^{\frac2{p-1}}}+o\big((-\mu_a+V_i)^{-\frac1s}\big).
\end{split}
\end{align*}
Moreover, we  calculate more precisely that for some constants $c_l$, $l=1,2,3$,
$$\Delta V(x_{a,i})=\Delta V(b_i)+c_1(-\mu_a)^{-\frac1s}+O\big((-\mu_a)^{-\frac1s-1}\big).$$
Hence,
\begin{align*}
\begin{split}
 & \int_{B_\rho(x_{a,i} )}sV(x)(u_a )^{2}
 +\frac{1}2\langle y-x_{a,i} ,\nabla V(x)\rangle
 (u_a )^2\\
 &=\frac{\frac s2+1}{(-\mu_a+V_i)^{\frac1s}}\Delta V(b_i)\int_{\R^N}|x|^2U^2(x)+sV_i\frac{a_*}{a^{\frac2{p-1}}}+c_2(-\mu_a)^{-\frac1s-1}
 +O\big((-\mu_a)^{-\frac2s-1}\big).
\end{split}
\end{align*}
Now summing \eqref{poh3} from $i=1$ to $i=k$, since $\frac N{p+1}-\frac{N-2s}2=\frac{2s^2}{N+2s}>0$,  we have
\begin{align}\label{5.21}
\begin{split}
 &s\mu_a+\frac{2as^2}{N+2s}\int_{\R^N}|u_a|^{p+1}\\
 &=\sum_{i=1}^k\frac{\frac s2+1}{(-\mu_a)^{\frac1s}}\Delta V(b_i)\int_{\R^N}|x|^2U^2(x)+\sum_{i=1}^ksV_i\frac{a_*}{a^{\frac2{p-1}}}+c_2(-\mu_a)^{-\frac1s-1}+
 O\big((-\mu_a)^{-\frac2s-1}\big).
\end{split}
\end{align}

On the other hand, by the orthogonality $\displaystyle \int_{\R^N} U^p\big((-\mu_a+V_i)^{\frac1{2s}}(x-x_{a,i})\big)\varphi_a=0$ for $i=1,\ldots,k$, there holds that
\begin{align*}
\begin{split}
\int_{\R^N}u_a^{p+1}&=\sum_{i=1}^k(\frac{-\mu_a+V_i}{a})^{\frac{p+1}{p-1}}
\Big((-\mu_a+V_i)^{-\frac N{2s}}\int_{\R^N}U^{p+1}\\
&\quad\quad+(p+1)\int_{\R^N} (U^p((-\mu_a+V_i)^{\frac1{2s}}(x-x_{a,i}))\varphi_a+O(|\varphi_a|^{p+1}))
\Big)\\&
=\frac{N+2s}{2s}\frac{a_*}{a^{\frac{p+1}{p-1}}}\sum_{i=1}^k(-\mu_a+V_i)^{\frac{p+1}{p-1}-\frac N{2s}}
+O(\mu_a^{\frac{p+1}{p-1}}\|\varphi_a\|_a^2)\\&
=\frac{N+2s}{2s}\frac{a_*}{a^{\frac{p+1}{p-1}}}\sum_{i=1}^k(-\mu_a+V_i)^{\frac{p+1}{p-1}-\frac N{2s}}
+O((-\mu_a)^{-1-\frac2s})\\&
=\frac{N+2s}{2s}\frac{a_*}{a^{\frac{p+1}{p-1}}}\sum_{i=1}^k(-\mu_a+V_i)
+O\big((-\mu_a)^{-1-\frac2s}\big),
\end{split}
\end{align*}
which, combined with \eqref{5.21}, gives then
\begin{align*}
\begin{split}
 &s(-\mu_a)^{1+\frac1s}(ka_*-a^{\frac2{p-1}})=(\frac s2+1)\sum_{i=1}^k\Delta V(b_i)\int_{\R^N}|x|^2U^2(x)+c_3(-\mu_a)^{-1}+O\big((-\mu_a)^{-\frac1s-1}\big).
\end{split}
\end{align*}
If we set $\delta=\left(\frac{2s}{(s+2)B_1}|ka_*-a^{\frac2{p-1}}|\right)^{\frac1{2(s+1)}},$
then we obtain \eqref{pohdelta}, and  \eqref{pohxb} can be found by  \eqref{pohdelta} and Lemma \ref{lem5.2}.

\end{proof}
\smallskip
\smallskip
\smallskip

By a change of variable, the problem \eqref{eq1}-\eqref{eqnorm} can be rewritten as
\begin{align}\label{eqdelta}
&\delta_a^{2s}(-\Delta)^su+\big(-\mu_a\delta_a^{2s}+\delta_a^{2s}V(x)\big)u=u^p,\ \ u\in H^s(\R^N),
\end{align}
\begin{align}\label{eqnormdelta}
&\int_{\R^N}u^2=(a\delta_a^{2s})^{\frac2{p-1}}.
\end{align}
Similar to Lemma \ref{lemla}, the $k$-peak solution of \eqref{eqdelta}--\eqref{eqnormdelta} concentrating at $b_1,\ldots,b_k$ can be written as
$$u=\sum_{i=1}^k\bar U_{\delta_a,x_{a,i}}+\bar\varphi_a$$
with $|x_{a,i}-b_i|=o(1)$, $\|\bar\varphi_a\|_{\delta_a}=o(\delta_a^{\frac N2})$. Moreover,
$$
\bar\varphi_a\in\bigcap_{i=1}^k\bar E_{a,x_{a,i}}:=\Big\{\phi\in H^s(\R^N):\langle\phi,\frac{\partial\bar U_{\delta_a,x_{a,i}}}{\partial x_j}\rangle_{\delta_{a}}=0, j=1,\ldots,N\Big\},
$$
where
$$\bar U_{\delta_a,x_{a,i}}(x):
=(1+(\gamma_1+V_i)\delta_a^{2s})^{\frac1{p-1}}U\Big(\frac{(1+(\gamma_1+V_i)\delta_a^{2s})^{\frac1{2s}}(x-x_{a,i})}{\delta_a}\Big),$$
$$\|\phi\|_{\delta_a}^2=\int_{\R^N}(\delta_a^2|(-\Delta)^{\frac s2}\phi|^2+\phi^2).$$
Now, we write the equation \eqref{eqdelta} as
$$\bar{L}_a(\bar\varphi_a)=N_a(\bar\varphi_a)+\bar l_a(x),$$
where
$$
\bar{L}_a(\bar\varphi_a)
=\delta_{a}^{2s}(-\Delta)^{s}\bar\varphi_a
+\big[1+(-\mu_{a}\delta^{2s}_{a}-1)+\delta_{a}^{2s}V\big]\bar\varphi_a
-p\sum_{i=1}^{k}\bar{U}^{p-1}_{\delta_{a},x_{a,i}}\bar\varphi_a,
$$
$N_a,L_a$ are defined by \eqref{Na},\eqref{La}, and
\begin{align*}
\bar l_a(x)=\big(\mu_a\delta_a^{2s}+1+\gamma_{1}\delta^{2s}_{a}-\delta_a^{2s}V(x)+V_{i}\delta^{2s}_{a}\big)\sum_{i=1}^k\bar U_{\delta_a,x_{a,i}}+\Big(\sum_{i=1}^k\bar U_{\delta_a,x_{a,i}}\Big)^p-\sum_{i=1}^k\bar U_{\delta_a,x_{a,i}}^p.
\end{align*}

\begin{Lem}
It holds that
\begin{align}\label{barphi}
\|\bar \varphi_a\|_{\delta_a}=O\big(\delta_a^{\frac N2+2s+2}\big).
\end{align}

\end{Lem}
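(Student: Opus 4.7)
The plan is to run the standard reduction scheme for the rescaled equation $\bar L_a(\bar\varphi_a)=N_a(\bar\varphi_a)+\bar l_a$: upgrade the coercivity of $\bar L_a$ on $\bigcap_{i=1}^k\bar E_{a,x_{a,i}}$ to a bounded right-inverse, control the error $\bar l_a$, and close a Banach contraction on a small ball. The improvement over Lemma~\ref{lemla} comes from the sharp asymptotics \eqref{pohdelta} and \eqref{pohxb}, which force $\bar l_a$ to be of order $\delta_a^{N/2+2s+2}$ rather than the coarser \eqref{phia}. Since \eqref{pohdelta} yields $-\mu_a\delta_a^{2s}\to 1$, the coefficient $1+(-\mu_a\delta_a^{2s}-1)+\delta_a^{2s}V(x)$ of $\bar L_a$ stays in a neighbourhood of $1$, so the invertibility argument for $L_a$ carries over verbatim and provides a uniform bound on the right-inverse.

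To estimate $\|\bar l_a\|_{L^2}$, which dominates the dual norm on $\bigcap\bar E_{a,x_{a,i}}$, I would split $\bar l_a=A+B+C$ with
\begin{align*}
A&=\bigl(\mu_a\delta_a^{2s}+1+\gamma_1\delta_a^{2s}\bigr)\sum_{i=1}^k\bar U_{\delta_a,x_{a,i}},\\
B&=\delta_a^{2s}\sum_{i=1}^k\bigl(V_i-V(x)\bigr)\bar U_{\delta_a,x_{a,i}},\\
C&=\Bigl(\sum_i\bar U_{\delta_a,x_{a,i}}\Bigr)^p-\sum_i\bar U_{\delta_a,x_{a,i}}^p.
\end{align*}
By \eqref{pohdelta}, the coefficient in $A$ is $O(\delta_a^{2s+2})$, and since $\|\bar U_{\delta_a,x_{a,i}}\|_{L^2}=O(\delta_a^{N/2})$, one gets $\|A\|_{L^2}=O(\delta_a^{N/2+2s+2})$. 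For $B$, use that $\bar x_{a,i}\in\Gamma_i$ is a critical point of $V$ with $V(\bar x_{a,i})=V_i$: together with \eqref{pohxb} and Lemma~\ref{lem5.2} this gives $\nabla V(x_{a,i})=O(\delta_a^2)$ and $V(x_{a,i})-V_i=O(\delta_a^4)$. Taylor-expanding $V(x)-V_i$ around $x_{a,i}$, the constant piece contributes $O(\delta_a^{N/2+2s+4})$, the linear piece gives $\delta_a^{2s}\cdot O(\delta_a^2)\cdot O(\delta_a^{N/2+1})=O(\delta_a^{N/2+2s+3})$ via Cauchy--Schwarz on $(x-x_{a,i})\bar U_{\delta_a,x_{a,i}}$, and the dominant quadratic piece is $\delta_a^{2s}\cdot O(\delta_a^{N/2+2})=O(\delta_a^{N/2+2s+2})$. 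Uniform separation of the $x_{a,i}$ (from $|x_{a,i}-b_i|=O(\delta_a^2)$ and $b_i\ne b_j$) reduces $C$ to the estimate of Lemma~\ref{lemla}, giving $\|C\|_{L^2}=O(\delta_a^{N/2+\min\{p/2,1\}(N+2s)})$, which is strictly higher order than $\delta_a^{N/2+2s+2}$ under $N\ge 2s+4$.

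Finally, the standard superlinear bound $\|N_a(\bar\varphi_a)\|\lesssim\|\bar\varphi_a\|_{\delta_a}^{\min\{p,2\}}$ makes the nonlinearity negligible on the ball $\{\bar\varphi:\|\bar\varphi\|_{\delta_a}\le C\delta_a^{N/2+2s+2}\}$, and a Banach fixed-point argument closes \eqref{barphi}. The main obstacle is purely a matter of sharp bookkeeping: one must verify that every higher-order remainder (from the Taylor expansion of $V$ in $B$, from the cross-interaction integrals in $C$, and from the nonlinear iterates $N_a$) is strictly below $\delta_a^{N/2+2s+2}$, and it is exactly this balance that explains the assumption $N\ge 2s+4$ (equivalently $p>\tfrac{4s+4}{N+2s}$, as already used in Lemma~\ref{lem5.2}) imposed throughout this section.
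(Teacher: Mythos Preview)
Your proposal is correct and follows essentially the same route as the paper: the paper's proof simply states that, ``similar to Lemma~\ref{lemla}'', it suffices to show $\|\bar l_a\|_{\delta_a}=O\big(\sum_i|V(x_{a,i})-V_i|\delta_a^{N/2+2s}+|\sum_i\nabla V(x_{a,i})|\delta_a^{N/2+2s+1}+\delta_a^{N/2+2s+2}\big)=O(\delta_a^{N/2+2s+2})$, which is exactly your decomposition $A+B+C$ together with the inputs \eqref{pohdelta} and Lemma~\ref{lem5.2}. Your write-up is considerably more explicit about the coercivity, the Taylor bookkeeping in $B$, and the fixed-point closure, but the underlying argument is identical.
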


\begin{proof}
Similar to Lemma \ref{lemla}, we obtain \eqref{barphi} only from proving
\begin{align*}
\|\bar l_a\|_{\delta_a}=O\Big(\sum_{i=1}^k|V(x_{a,i})-V_i|\delta_a^{\frac N2+2s}+|\sum_{i=1}^k\nabla V(x_{a,i})|\delta_a^{\frac N2+2s+1}+\delta_a^{\frac N2+2s+2}\Big)
=O\big(\delta_a^{\frac N2+2s+2}\big).
\end{align*}
\end{proof}

\smallskip\smallskip

Let $u_a^{(1)}$ and $u_a^{(2)}$ be two $k$-peak solutions to \eqref{eqdelta} and \eqref{eqnormdelta} concentrating at k points $b_1,\ldots,b_k$, of the form
\begin{align}\label{5.26}
u_a^{(l)}=\sum_{i=1}^k\bar U_{\delta_a,x_{a,i}^{(l)}}+\bar\varphi_a^{(l)},\ \ \bar\varphi_a^{(l)}\in \bigcap_{i=1}^k\bar E_{a,x_{a,i}^{(l)}}\ \ l=1,2.
\end{align}

Set $\xi_{a}(x)=\frac{u_{a}^{(1)}(x)-u_{a}^{(2)}(x)}{\|u_{a}^{(1)}-u_{a}^{(2)}\|_{L^\infty(\R^N)}}$. Then $\|\xi_a\|_{L^\infty(\R^N)}=1$.
We get from \eqref{eqdelta} that
\begin{align*}
\delta^{2s}(-\Delta)^s\xi_a+C_a(x)\xi_a=g_a(x),
\end{align*}
where
\begin{align*}
&C_a(x)=\delta^{2s}V(x)-\delta^{2s}\mu_a^{(1)}-p\int_0^1(tu_a^{(1)}+(1-t)u_a^{(2)})^{p-1}dt,\nonumber\\
&g_a(x)=\frac{\delta_a^{2s}(\mu_a^{(1)}-\mu_a^{(2)})}{\|u_{a}^{(1)}-u_{a}^{(2)}\|_{L^\infty(\R^N)}}u_{a}^{(2)}.
\end{align*}

Following \cite{davilapinowei} and \eqref{decay}, we can prove that

\begin{align}\label{xidecay}
|\xi_a(x)|+|\nabla\xi_a(x)|\leq C\sum_{i=1}^k\frac{1}{\left(1+|\frac{x-x_{a,i}}{\delta_a}|^2\right)^{\frac{N+2s-\theta}{2}}}.
\end{align}

Applying blowing-up technique, we set $$\bar \xi_{a,i}(x)=\xi_{a,i}\Big(\frac{\delta_a}{P_i^{\frac1{2s}}}x+x_{a,i}^{(1)}\Big),\ \ P_i=1+(\gamma_1+V_i)\delta^{2s}.$$
The $\bar\xi_{a,i}$ satisfies that
\begin{align}\label{barxi}
(-\Delta)^s\bar\xi_{a,i}+\frac{1}{P_i}C_a(\frac{\delta_a}{P_i^{\frac1{2s}}}x+x_{a,i}^{(1)})\bar\xi_{a,i}=\frac{1}{P_i}g_a(\frac{\delta_a}{P_i^{\frac1{2s}}}x+x_{a,i}^{(1)}).
\end{align}

\begin{Lem}\label{lemcaga}
For $x\in B_{\delta_a^{-1}P_i^{\frac1{2s}}\rho}(0)$, it holds that
\begin{align}\label{Ca}
\frac{1}{P_i}C_a(\frac{\delta_a}{P_i^{\frac1{2s}}}x+x_{a,i}^{(1)})=1-pU^{p-1}(x)
+O\Big(\delta_a^{2s}+\sum_{l=1}^2\varphi_a^{(l)}(\frac{\delta_a}{P_i^{\frac1{2s}}}x+x_{a,i}^{(1)})\Big),
\end{align}
and
\begin{align}\label{ga}
\frac{1}{P_i}g_a(\frac{\delta_a}{P_i^{\frac1{2s}}}x+x_{a,i}^{(1)})=-\frac{p-1}{ka_*}U(x)\sum_{j=1}^k
\int_{\R^N}U^p(x)\bar\xi_{a,j}(x)+O\Big(\delta_a^{2s+2}+\sum_{l=1}^2\varphi_a^{(l)}(\frac{\delta_a}{P_i^{\frac1{2s}}}x+x_{a,i}^{(1)})\Big).
\end{align}

\end{Lem}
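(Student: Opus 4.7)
The plan is to substitute $y=\frac{\delta_a}{P_i^{1/2s}}x+x_{a,i}^{(1)}$ into the definitions of $C_a$ and $g_a$ and Taylor-expand each factor, using the refined estimates \eqref{pohdelta}, \eqref{pohxb}, \eqref{barphi}, together with the decomposition \eqref{5.26}. I will treat \eqref{Ca} and \eqref{ga} separately.

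First I would prove \eqref{Ca}. The identity \eqref{pohdelta} together with $P_i=1+(\gamma_1+V_i)\delta_a^{2s}$ gives $-\delta_a^{2s}\mu_a^{(1)}/P_i=1-V_i\delta_a^{2s}+O(\delta_a^{2s+2})$, while on the relevant domain $y$ stays in a fixed neighbourhood of $b_i$, so $\delta_a^{2s}V(y)/P_i=O(\delta_a^{2s})$. For the nonlinear term, \eqref{5.26} yields $u_a^{(l)}(y)=\bar U_{\delta_a,x_{a,i}^{(l)}}(y)+\bar\varphi_a^{(l)}(y)+\sum_{j\neq i}\bar U_{\delta_a,x_{a,j}^{(l)}}(y)$, with the crucial observation that by construction $\bar U_{\delta_a,x_{a,i}^{(1)}}(y)=P_i^{1/(p-1)}U(x)$ identically, the $\ell=2$ analogue differs only by the small shift $|x_{a,i}^{(1)}-x_{a,i}^{(2)}|/\delta_a=O(\delta_a^{2s+1})$ obtained by applying \eqref{pohxb} to both solutions (the leading $\delta_a^{2}$-coefficients depend only on $b_i$ and therefore cancel), and the off-diagonal bubbles decay pointwise as $O(\delta_a^{N+2s})$. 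Expanding $(tu_a^{(1)}+(1-t)u_a^{(2)})^{p-1}$ via the Taylor formula $(A+B)^{p-1}=A^{p-1}+O(A^{p-2}B)$ with $A=P_i^{1/(p-1)}U(x)$ and dividing by $P_i$ produces $pU^{p-1}(x)$ plus remainders absorbed in $O(\delta_a^{2s}+\sum_{l=1}^{2}\bar\varphi_a^{(l)}(y))$, yielding \eqref{Ca}.

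Next I would prove \eqref{ga}. Writing $w=u_a^{(1)}-u_a^{(2)}$ and $S=u_a^{(1)}+u_a^{(2)}$, the task reduces to computing $\delta_a^{2s}(\mu_a^{(1)}-\mu_a^{(2)})/\|w\|_{L^\infty}$ to leading order. The idea is to derive an integral identity by testing the weak forms of the equations. Testing the equation for $\xi_a$ against $S$ (respectively, the sum of the equations for $u_a^{(1)}$ and $u_a^{(2)}$ against $\xi_a$), subtracting, and using the mass constraint \eqref{eqnormdelta}, which forces $\int Sw=0$, one arrives at
\[
\delta_a^{2s}(\mu_a^{(1)}-\mu_a^{(2)})\cdot\frac{2\int u_a^{(1)}u_a^{(2)}}{\|w\|_{L^\infty}}=\int\Big[(u_a^{(1)})^p+(u_a^{(2)})^p-pS\int_0^1(tu_a^{(1)}+(1-t)u_a^{(2)})^{p-1}dt\Big]\xi_a.
\]
At leading order the right-hand side equals $-2(p-1)\int u_a^p\xi_a\approx -2(p-1)\sum_j\int\bar U_{\delta_a,x_{a,j}^{(1)}}^{p}\xi_a$, and the change of variable $\zeta=P_j^{1/2s}(y-x_{a,j}^{(1)})/\delta_a$ turns this into $-2(p-1)\delta_a^{N}\sum_j\int U^p\bar\xi_{a,j}\,d\zeta+O(\delta_a^{N+2s})$. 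On the left, using $\int u_a^{(1)}u_a^{(2)}=M+o(M)$ with $M=(a\delta_a^{2s})^{2/(p-1)}$ together with $a^{2/(p-1)}=ka_*\delta_a^{N-4s/(p-1)}+o(\delta_a^{N-4s/(p-1)})$ from Proposition \ref{prop3.4} and the definition of $\delta_a$ (or, when $p-1=\tfrac{4s}{N}$, from \eqref{pohdelta} and the alternative definition of $\delta_a$), one gets $M=ka_*\delta_a^N+o(\delta_a^N)$. Dividing and then multiplying by $u_a^{(2)}(y)/P_i\approx U(x)$ produces \eqref{ga}.

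The main obstacle will be propagating all error terms to the stated precision $O(\delta_a^{2s+2}+\sum_l\bar\varphi_a^{(l)}(y))$. The sharp location estimate $|x_{a,i}^{(1)}-x_{a,i}^{(2)}|=O(\delta_a^{2s+2})$, which relies on both solutions sharing the same leading $\delta_a^2$-coefficient in \eqref{pohxb} and in Lemma \ref{lem5.2}, is crucial for absorbing the bubble-shift error into the $\bar\varphi$ remainder, as is the sharp remainder in \eqref{pohdelta}. A further subtlety is the mass-critical case $p-1=\tfrac{4s}{N}$: although the definition of $\delta_a$ changes there, the leading behaviour $M/\delta_a^N\to ka_*$ still holds, so the limiting coefficient $-(p-1)/(ka_*)$ in \eqref{ga} is unchanged.
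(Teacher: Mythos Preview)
Your proposal is correct and follows the same overall strategy as the paper: \eqref{Ca} is obtained by direct substitution and Taylor expansion of the three pieces of $C_a$, and \eqref{ga} by first deriving an integral identity for $\delta_a^{2s}(\mu_a^{(1)}-\mu_a^{(2)})/\|w\|_{L^\infty}$ from the equations and the mass constraint, then multiplying by $u_a^{(2)}(y)/P_i$.

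The one difference worth noting is in the specific identity used for the second part. The paper starts from the energy relations $\mu_a^{(l)}\int (u_a^{(l)})^2=\delta_a^{2s}\int\bigl(|(-\Delta)^{s/2}u_a^{(l)}|^2+V(u_a^{(l)})^2\bigr)-\int (u_a^{(l)})^{p+1}$, takes the difference, and then rewrites the quadratic form via the equations tested against $\xi_a$, arriving at \eqref{5.34}, whose right-hand side still carries a residual term $(\mu_a^{(1)}-\mu_a^{(2)})\delta_a^{2s}\int u_a^{(1)}\xi_a$ and involves $(p+1)\int_0^1(\cdots)^p\,dt$. You instead cross-test the equation for $\xi_a$ against $S$ and the summed equations for $u_a^{(l)}$ against $\xi_a$; this produces a cleaner identity with $2\int u_a^{(1)}u_a^{(2)}$ on the left, $pS\int_0^1(\cdots)^{p-1}dt$ on the right, and no $(\mu_a^{(1)}-\mu_a^{(2)})$ correction to move around. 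Both identities are equivalent at leading order (each gives $-(p-1)\sum_j\int U^p\bar\xi_{a,j}$ after dividing by $M=(a\delta_a^{2s})^{2/(p-1)}=ka_*\delta_a^N(1+o(1))$) and the error bookkeeping is the same, so this is a matter of presentation rather than a different route.
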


\begin{proof}
Since
\begin{align*}
C_a(\frac{\delta_a}{P_i^{\frac1{2s}}}x+x_{a,i}^{(1)})
=&\delta^{2s}V(\frac{\delta_a}{P_i^{\frac1{2s}}}x+x_{a,i}^{(1)})-\delta^{2s}\mu_a^{(1)}\nonumber\\&
-p\int_0^1(tu_a^{(1)}+(1-t)u_a^{(2)})^{p-1}(\frac{\delta_a}{P_i^{\frac1{2s}}}x+x_{a,i}^{(1)})dt,
\end{align*}
we can obtain \eqref{Ca} directly.

From \eqref{eqdelta} and \eqref{eqnormdelta}, for $l=1,2$, we get that
\begin{align*}
\mu_a^{(l)}\delta_a^{\frac{4s}{p-1}+2s}a^{\frac2{p-1}}=\delta_a^{2s}\int_{\R^N}
\Big(|(-\Delta)^{\frac s2}u_a^{(l)}|^2+V(x)(u_a^{(l)})^2\Big)-\int_{\R^N}|u_a^{(l)}|^{p+1},
\end{align*}
which implies that
\begin{align}\label{5.34}
&\frac{\delta_a^{\frac{4s}{p-1}+2s}a^{\frac2{p-1}}(\mu_a^{(1)}-\mu_a^{(2)})}{\|u_{a}^{(1)}-u_{a}^{(2)}\|_{L^\infty(\R^N)}}\nonumber\\
=&\delta_a^{2s}\int_{\R^N}((-\Delta)^{\frac s2}u_a^{(1)}+(-\Delta)^{\frac s2}u_a^{(2)})\cdot(-\Delta)^{\frac s2}\xi_a
+V(x)(u_a^{(1)}+u_a^{(2)})\xi_a\nonumber\\&
-\int_{\R^N}\frac{(|u_a^{(1)}|^{p+1}-|u_a^{(2)}|^{p+1})}{\|u^{(1)}_{a}-u^{(2)}_{a}\|_{L^{\infty}(\R^{N})}}\nonumber\\
=&\delta_a^{2s}\int_{\R^N}((-\Delta)^{\frac s2}u_a^{(1)}+(-\Delta)^{\frac s2}u_a^{(2)})\cdot(-\Delta)^{\frac s2}\xi_a
+V(x)(u_a^{(1)}+u_a^{(2)})\xi_a\nonumber\\&
-\int_{\R^N}\frac{|u_a^{(1)}|^{p+1}-|u_a^{(2)}|^{p+1}}{\|u_{a}^{(1)}-u_{a}^{(2)}\|_{L^\infty(\R^N)}}-\mu_a^{(2)}\delta_a^{2s}\int_{\R^N}(u_a^{(1)}+u_a^{(2)})\xi_a\nonumber\\
=&(\mu_a^{(1)}-\mu_a^{(2)})\delta^{2s}_{a}\int_{\R^N}u_a^{(1)}\xi_a+\int_{\R^N}(|u_a^{(1)}|^{p}+|u_a^{(2)}|^{p})\xi_a-
(p+1)\int_{\R^N}\xi_a\int_0^1(tu_a^{(1)}+(1-t)u_a^{(2)})^{p}dt.
\end{align}
Hence, in view of \eqref{pohdelta}, \eqref{pohxb}, \eqref{barphi}, \eqref{5.34} gives that
\begin{align*}
\frac{\delta_a^{2s}(\mu_a^{(1)}-\mu_a^{(2)})}{\|u_{a}^{(1)}-u_{a}^{(2)}\|_{L^\infty(\R^N)}}
=&-\frac{p-1}{ka_*\delta_a^{N}}\Big(\int_{\R^N}(|u_a^{(1)}|^{p}+|u_a^{(2)}|^{p})\xi_a-
(p+1)\xi_a\int_0^1(tu_a^{(1)}+(1-t)u_a^{(2)})^{p}dt\Big)\nonumber\\
&+O(\delta_a^{2s+2}),
\end{align*}
which implies \eqref{ga}.
\end{proof}

\medskip

Since $|\bar\xi_{a,i}|\leq1$, we suppose that $\bar\xi_{a,i}\rightarrow\bar \xi_i(x)$ in $C^1_{loc}(\R^N)$.
Lemma \ref{lemcaga} actually shows the following result.

\begin{Lem}
There hold that
\begin{align}\label{eqbarxi}
(-\Delta)^s\bar\xi_i+(1-pU^{p-1})\bar\xi_i=-\frac{p-1}{ka_*}U(x)\sum_{j=1}^k
\int_{\R^N}U^p(x)\bar\xi_{j}(x),\ \ i=1,\ldots,k.
\end{align}

\end{Lem}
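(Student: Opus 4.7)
The plan is to pass to the limit in the rescaled equation \eqref{barxi} using the precise expansion of coefficients given by Lemma \ref{lemcaga}. Concretely, for each fixed $i \in \{1,\ldots,k\}$, the identity
\begin{equation*}
(-\Delta)^s \bar{\xi}_{a,i} + \frac{1}{P_i} C_a\!\Bigl(\tfrac{\delta_a}{P_i^{1/2s}} x + x_{a,i}^{(1)}\Bigr)\bar{\xi}_{a,i}
= \frac{1}{P_i} g_a\!\Bigl(\tfrac{\delta_a}{P_i^{1/2s}} x + x_{a,i}^{(1)}\Bigr)
\end{equation*}
is already established, and it remains only to pass $a \to a_0$ in each of the three terms.

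First, I would treat the coefficient on the left. From \eqref{Ca} in Lemma \ref{lemcaga}, together with $\delta_a \to 0$ and $\|\bar{\varphi}_a^{(l)}\|_{L^\infty} \to 0$ (which follows from \eqref{barphi} combined with Moser iteration on the extension problem, or from the uniform $L^\infty$ bound of $v_a$), one gets $\frac{1}{P_i}C_a(\tfrac{\delta_a}{P_i^{1/2s}}x+x_{a,i}^{(1)}) \to 1 - p U^{p-1}(x)$ locally uniformly in $\R^N$. Combined with $\bar\xi_{a,i}\to \bar\xi_i$ in $C^1_{\mathrm{loc}}$, this gives convergence of the potential term pointwise.

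Next, for the right-hand side, \eqref{ga} gives the leading order
\begin{equation*}
\frac{1}{P_i} g_a\!\Bigl(\tfrac{\delta_a}{P_i^{1/2s}} x + x_{a,i}^{(1)}\Bigr)
= -\frac{p-1}{k a_*} U(x) \sum_{j=1}^{k} \int_{\R^N} U^p(y)\, \bar{\xi}_{a,j}(y)\, dy + o(1).
\end{equation*}
To send this to its intended limit I must show $\int_{\R^N} U^p \bar{\xi}_{a,j}\, dy \to \int_{\R^N} U^p \bar{\xi}_j\, dy$. The uniform decay \eqref{xidecay} transferred to $\bar{\xi}_{a,j}$ gives $|\bar\xi_{a,j}(y)|\le C(1+|y|)^{-(N+2s-\theta)}$, so $U^p(y)|\bar\xi_{a,j}(y)|$ has an $a$-independent integrable majorant, and the $C^1_{\mathrm{loc}}$ convergence together with dominated convergence yields the claim.

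The main obstacle is the passage to the limit inside the nonlocal operator $(-\Delta)^s \bar\xi_{a,i}$. Here I would argue by testing \eqref{barxi} against arbitrary $\psi \in C^\infty_c(\R^N)$ and using the bilinear form $\int (-\Delta)^{s/2}\bar\xi_{a,i}\,(-\Delta)^{s/2}\psi$. Splitting the double-integral representation into a near-diagonal piece (controlled by the $C^1_{\mathrm{loc}}$ convergence of $\bar\xi_{a,i}$) and a tail piece (dominated thanks to \eqref{xidecay}, uniformly in $a$), the dominated convergence theorem gives $\int \bar\xi_{a,i}(-\Delta)^s\psi \to \int \bar\xi_i (-\Delta)^s\psi$. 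Combining the three limits yields \eqref{eqbarxi} in the distributional sense; fractional elliptic regularity promotes this to the classical sense, since the coefficients $1-pU^{p-1}$ and $U$ are smooth and bounded.
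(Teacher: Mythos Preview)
Your proposal is correct and follows exactly the approach the paper intends: the paper's own proof is the single sentence ``Lemma \ref{lemcaga} actually shows the following result,'' and you have simply spelled out the limiting procedure (local uniform convergence of the coefficient via \eqref{Ca}, dominated convergence for the integral on the right via \eqref{ga} and the uniform decay \eqref{xidecay}, and distributional passage to the limit for $(-\Delta)^s$) that this sentence leaves implicit.
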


In order to show $\bar{\xi}_i=0$, we write
\begin{align}\label{barzeta}
\bar\xi_{a,i}(x)=\sum_{j=0}^N\gamma_{a,i,j}\Psi_j+\bar\zeta_{a,i}(x),\ \ in\ H^s(\R^N),
\end{align}
where $\Psi_j(j=0,1,\ldots,N)$ are functions in \eqref{Psi}, $\bar\zeta_{a,i}\in\bar E$ and
$$\bar E=\{u\in  H^s(\R^N), \langle u,\Psi_j\rangle=0,\ for\ j=0,1,\ldots,N\}.$$

It is standard to show the following problem.
\begin{Lem}\label{lemcoe}
For any $u\in\bar E$, there holds that
\begin{align*}
\|\bar L(u)\|\geq c_0\|u\|,
\end{align*}
where $c_0>0$ is a constant, and the linear operator is defined by
\begin{align}\label{eqxii}
\bar L(u)=(-\Delta)^su+(1-pU^{p-1})u+\frac{p-1}{a_*}U(x)
\int_{\R^N}U^p(x)u(x)dx.
\end{align}
\end{Lem}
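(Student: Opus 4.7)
The plan is to argue by contradiction using the Fredholm-type structure of $\bar L$. Suppose the estimate fails: then there exist a sequence $u_n\in \bar E$ with $\|u_n\|=1$ but $\|\bar L(u_n)\|\to 0$. After extracting a subsequence we may assume $u_n\rightharpoonup u_*$ weakly in $H^s(\R^N)$. Passing to the limit in $\bar L(u_n)\to 0$ in the distributional sense, the nonlocal part is weakly continuous, while the potential term $pU^{p-1}u_n$ and the rank-one term $\tfrac{p-1}{a_*}U\int U^p u_n$ pass to the limit because of the algebraic decay $U(x)\lesssim (1+|x|)^{-(N+2s)}$ and the local compactness of $H^s(\R^N)\hookrightarrow L^2_{\mathrm{loc}}$; this yields $\bar L(u_*)=0$.

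The next task is to show $u_*\equiv 0$ by identifying the kernel of $\bar L$. If $u\in H^s(\R^N)$ satisfies $\bar L(u)=0$, setting $c:=\displaystyle\int_{\R^N}U^p u$ we obtain $L_0 u=-\tfrac{p-1}{a_*}c\,U$, where $L_0=(-\Delta)^s+1-pU^{p-1}$. Since $U$ is even and $\ker L_0=\mathrm{span}\{\partial_{x_1}U,\ldots,\partial_{x_N}U\}$ consists of odd functions, $U\perp\ker L_0$, so by the Fredholm alternative there is a solution $\psi_0$ of $L_0\psi_0=U$, unique up to elements of $\ker L_0$. Consequently every $u\in\ker\bar L$ can be written as $u=\alpha\,\psi_0+\sum_{j=1}^N\alpha_j\partial_{x_j}U$, which is exactly the span of $\Psi_0,\Psi_1,\ldots,\Psi_N$ from \eqref{Psi}. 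Because $u_*\in\bar E$ annihilates each $\Psi_j$ in the $H^s$-inner product, the decomposition forces $u_*=0$.

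Having established $u_n\rightharpoonup 0$, we test the equation against $u_n$ itself. This gives
\[
o(1)=\langle \bar L(u_n),u_n\rangle=\|u_n\|^2-p\int_{\R^N}U^{p-1}u_n^2+\frac{p-1}{a_*}\Big(\int_{\R^N}U^p u_n\Big)^2.
\]
The decay of $U$ together with the compactness of the multiplication operator $v\mapsto U^{(p-1)/2}v$ from $H^s(\R^N)$ to $L^2(\R^N)$ (obtained by splitting into a large ball, where compact embedding applies, and a tail where $U^{p-1}$ is arbitrarily small) forces both integrals to vanish in the limit. We are left with $1+o(1)=o(1)$, a contradiction.

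The main obstacle is the kernel computation: one must verify the solvability condition $U\perp\ker L_0$ and identify $\Psi_0$ as the unique (mod $\ker L_0$) solution of $L_0\psi=U$, so that the orthogonality conditions defining $\bar E$ exactly kill the kernel of $\bar L$. The remainder of the argument is a standard concentration-compactness scheme adapted to the nonlocal operator $(-\Delta)^s+1$, and all strong convergence steps rely solely on the algebraic decay of $U$ together with the non-degeneracy statement (iii) recalled in the introduction.
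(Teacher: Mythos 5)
Your contradiction/compactness argument with the Fredholm-type kernel identification is correct, and it is precisely the ``standard'' proof that the paper omits: the kernel fact $\ker\bar L=\mathrm{span}\{\Psi_0,\Psi_1,\ldots,\Psi_N\}$ you derive via the Fredholm alternative for $L_0=(-\Delta)^s+1-pU^{p-1}$ is the content of the paper's Lemma B.1, so your route matches the intended one. Two harmless slips: $\Psi_0=U+\frac{p-1}{2s}\,x\cdot\nabla U$ satisfies $L_0\Psi_0=-(p-1)U$ (with $\int_{\R^N}U^p\Psi_0=a_*$), not $L_0\Psi_0=U$, which only changes a constant in your identification of $\psi_0$; and since $\bar L$ is not symmetric, testing against $u_n$ produces the term $\frac{p-1}{a_*}\big(\int_{\R^N}U^pu_n\big)\big(\int_{\R^N}Uu_n\big)$ rather than a square --- but both factors vanish by the same compactness argument, so the final contradiction $1+o(1)=o(1)$ is unaffected.
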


\begin{Prop}
The $\bar\xi_{a,i}(x)$ defined by \eqref{barzeta} satisfies that     
\begin{align}\label{5.39}
\|\bar\zeta_{a,i}\|=O(\delta_a^{2s}).
\end{align}
\end{Prop}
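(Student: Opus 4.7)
The strategy is to apply the coercive bound of Lemma \ref{lemcoe} to $\bar\zeta_{a,i}$. Since $\bar L(\Psi_j)=0$ for every $j$, we have $\bar L(\bar\zeta_{a,i})=\bar L(\bar\xi_{a,i})$, so that $\|\bar\zeta_{a,i}\|\leq c_0^{-1}\|\bar L(\bar\xi_{a,i})\|$, and the entire task reduces to showing $\|\bar L(\bar\xi_{a,i})\|=O(\delta_a^{2s})$. To compute $\bar L(\bar\xi_{a,i})$, I substitute the expansions \eqref{Ca}--\eqref{ga} from Lemma \ref{lemcaga} into \eqref{barxi}, then add $\frac{p-1}{a_*}U\int U^p\bar\xi_{a,i}$ to both sides to reconstruct $\bar L$. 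The resulting identity is
\[
\bar L(\bar\xi_{a,i})=c_{a,i}\,U+R_{a,i},\qquad c_{a,i}=\frac{p-1}{a_*}\Bigl(\int_{\R^N}U^p\bar\xi_{a,i}-\frac{1}{k}\sum_{j=1}^k\int_{\R^N}U^p\bar\xi_{a,j}\Bigr),
\]
where $R_{a,i}$ collects the explicit $O(\delta_a^{2s+2})$ error from Lemma \ref{lemcaga} together with the cross terms $O(\delta_a^{2s}+\sum_l\bar\varphi_a^{(l)})\bar\xi_{a,i}$. The latter are controlled using \eqref{barphi}: after the change of variables $y=x_{a,i}^{(1)}+\delta_a P_i^{-1/(2s)}x$, the $L^2(\R^N)$-norm of the rescaled $\bar\varphi_a^{(l)}$ carries a factor $\delta_a^{-N/2}$, giving $O(\delta_a^{-N/2}\cdot\delta_a^{N/2+2s+2})=O(\delta_a^{2s+2})$, and is therefore absorbed into $\|R_{a,i}\|=O(\delta_a^{2s})$.

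The crucial and most delicate step is to show $c_{a,i}=O(\delta_a^{2s})$: without this, the $c_{a,i}U$ term on the right would be of order $O(1)$ and Lemma \ref{lemcoe} would be useless. Here I exploit the fact that $U$ lies in the cokernel of $\bar L$. A direct computation, based on the identity $L_0(U)=(-\Delta)^s U+U-pU^p=-(p-1)U^p$ (which follows from the equation for $U$) together with $a_*=\int U^2$, gives for every $v\in H^s(\R^N)$,
\[
\int_{\R^N}U\,\bar L(v)=\int_{\R^N}v\,L_0(U)+\frac{p-1}{a_*}\Bigl(\int_{\R^N}U^2\Bigr)\Bigl(\int_{\R^N}U^p v\Bigr)=-(p-1)\int_{\R^N}U^p v+(p-1)\int_{\R^N}U^p v=0.
\]
Applying this exact identity with $v=\bar\zeta_{a,i}$ and using $\bar L(\bar\zeta_{a,i})=c_{a,i}U+R_{a,i}$ yields $0=c_{a,i}\,a_*+\int U R_{a,i}$, hence $c_{a,i}=-a_*^{-1}\int U R_{a,i}=O(\|R_{a,i}\|_{L^2})=O(\delta_a^{2s})$. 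Combining this with the earlier bound on $R_{a,i}$ gives $\|\bar L(\bar\zeta_{a,i})\|=O(\delta_a^{2s})$, and Lemma \ref{lemcoe} concludes $\|\bar\zeta_{a,i}\|=O(\delta_a^{2s})$.

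The main obstacle is precisely the identification of the right ``test direction'' for killing the $c_{a,i}U$ term. Because $\bar L$ is not self-adjoint in $L^2$, one cannot simply test against the kernel element $\Psi_0=\frac{U}{p-1}+\frac{x\cdot\nabla U}{2s}$: doing so produces a coefficient $(\frac{1}{p-1}-\frac{N}{4s})\int U^2$ which degenerates exactly in the mass-critical case $p-1=\frac{4s}{N}$. Detecting that $U\in\ker\bar L^{*}$ is what renders the estimate $c_{a,i}=O(\delta_a^{2s})$ uniform across the mass-subcritical, mass-critical and mass-supercritical regimes, and this is the observation that makes the proposition go through.
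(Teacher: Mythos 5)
Your proposal is correct, and its skeleton is the same as the paper's: since $\bar L(\Psi_j)=0$ for $j=0,1,\ldots,N$ (for $j=0$ this uses the Pohozaev identity $a_*=\bigl(1-\frac{N(p-1)}{2s(p+1)}\bigr)\int_{\R^N}U^{p+1}$, exactly as you implicitly do), one has $\bar L(\bar\zeta_{a,i})=\bar L(\bar\xi_{a,i})$, and the task reduces to bounding $\bar L(\bar\xi_{a,i})$ via the rescaled equation \eqref{barxi} together with the expansions \eqref{Ca}--\eqref{ga} of Lemma \ref{lemcaga}, and then invoking the coercivity of Lemma \ref{lemcoe}. Where you genuinely add something is in the treatment of the rank-one term: substituting \eqref{barxi} into $\bar L$ produces $\frac{p-1}{a_*}U\int U^p\bar\xi_{a,i}$, while \eqref{ga} only cancels the averaged quantity $\frac{p-1}{ka_*}U\sum_j\int U^p\bar\xi_{a,j}$; the paper's displayed identity simply writes the averaged term in place of the individual one, so the discrepancy $c_{a,i}U$ that you isolate is passed over without comment there (it vanishes for the limit profiles by the Appendix~B lemma, but a quantitative $O(\delta_a^{2s})$ bound is what the proof actually needs). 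Your duality step — observing $U\in\ker\bar L^{*}$, i.e. $\int_{\R^N}U\,\bar L(v)=0$ for all $v$, and testing $\bar L(\bar\zeta_{a,i})=c_{a,i}U+R_{a,i}$ against $U$ to get $|c_{a,i}|\le a_*^{-1}\bigl|\int U R_{a,i}\bigr|=O(\delta_a^{2s})$ — closes this point cleanly and, as you note, works uniformly in the mass-subcritical, critical and supercritical regimes, whereas testing against $\Psi_0$ would degenerate at $p-1=\frac{4s}{N}$. The remaining bookkeeping (the $\delta_a^{-N/2}\|\bar\varphi_a^{(l)}\|_{\delta_a}=O(\delta_a^{2s+2})$ rescaling and the restriction of \eqref{Ca}--\eqref{ga} to the ball $B_{\delta_a^{-1}P_i^{1/(2s)}\rho}(0)$, with the exterior handled by the decay \eqref{decay}, \eqref{xidecay}) is treated at the same level of detail as in the paper, so no further gap is introduced.
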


\begin{proof}
Using \eqref{barxi}-\eqref{eqbarxi}, for any $\psi\in H^{s}(\R^{N})$ we have by Lemma \ref{lemcaga} and \eqref{barphi} that
\begin{align*}
\langle\bar L(\bar\zeta_{a,i}),\psi\rangle&=\int_{\R^{N}}\big[(1-pU^{p-1}-\frac{1}{P_i}C_a(\frac{\delta_a}{P_i^{\frac1{2s}}}x+x_{a,i}^{(1)}))\bar\xi_{a,i}\big]
\psi\nonumber\\&
\quad+\int_{\R^{N}}\Big[\frac{1}{P_i}g_a(\frac{\delta_a}{P_i^{\frac1{2s}}}x+x_{a,i}^{(1)})+\frac{p-1}{ka_*}U(x)\sum_{j=1}^k
\int_{\R^N}U^p(x)\bar\xi_{a,j}(x)\Big]\psi\nonumber\\&
=O(\delta_a^{2s})\|\psi\|+O\Big(\int_{\R^{N}}\sum_{l=1}^2\varphi_a^{(l)}
(\frac{\delta_a}{P_i^{\frac1{2s}}}x+x_{a,i}^{(1)}))|\psi|\Big)\nonumber\\
&=O(\delta_a^{2s})\|\psi\|+O(\delta_a^{-\frac{N}{2}})\|\varphi_a^{(l)}
\|_{\delta_{a}}\|\psi\|=O(\delta_a^{2s})\|\psi\|,
\end{align*}
which implies \eqref{5.39} by Lemma \ref{lemcoe}.

\end{proof}

\smallskip\smallskip

In the following we prove $$\gamma_{a,i,j}=o(1)~~\mbox{for}~~ j=0,1,\ldots,N.$$
First, we have the estimate for $\xi_a$.
\begin{Lem}
It holds that
\begin{align}\label{5.41}
\begin{split}
&\delta_a^{2s}\sum_{i=1}^k\int_{B_\rho(x_{a,i}^{(1)})}\left(sV(x)+\frac12\langle y-x_{a,i}^{(1)},\nabla V(x)\rangle
\right)(u_a^{(1)}+u_a^{(2)})\xi_a
\\
=&(\mu_a^{(1)}-\mu_a^{(2)})\delta_a^{2s}\int_{\R^N}u_a^{(1)}\xi_a+\int_{\R^N}((u_a^{(1)})^p+(u_a^{(2)})^p)\xi_a\\&
-(p+1)\int_{\R^N}\int_0^1(tu_a^{(1)}+(1-t)u_a^{(2)})^{p}\xi_a\\&
+(\frac N{p+1}-\frac{N-2s}2)(p+1)\int_{B_\rho(x_{a,i}^{(1)})}\int_0^1(tu_a^{(1)}+(1-t)u_a^{(2)})^{p}\xi_a+O(\delta_a^{2N+2s}).
\end{split}
\end{align}
\end{Lem}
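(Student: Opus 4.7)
The plan is to derive \eqref{5.41} by (i) writing a local Pohozaev identity of \eqref{poh3}-type for each of $u_a^{(1)}$ and $u_a^{(2)}$, both centered at the common point $x_{a,i}^{(1)}$ and localized to $\mathbf{B}_\rho^+(x_{a,i}^{(1)})$; (ii) subtracting these two identities and dividing by $\|u_a^{(1)}-u_a^{(2)}\|_{L^\infty(\R^N)}$; and (iii) summing the resulting $k$ identities over $i$ and using the global identity \eqref{5.34} together with the mass constraint \eqref{eqnormdelta} to reshape the right-hand side into the form stated in \eqref{5.41}.

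Concretely, for each $l\in\{1,2\}$, multiplying $\mathrm{div}(t^{1-2s}\nabla\tilde u_a^{(l)})=0$ by $\langle Y-X_{a,i}^{(1)},\nabla\tilde u_a^{(l)}\rangle$ on $\mathbf{B}_\rho^+(x_{a,i}^{(1)})$ and using the Neumann trace from \eqref{eqdelta}, the standard integration by parts leading to \eqref{poh1}--\eqref{poh3} yields
\[
\delta_a^{2s}\!\!\int_{B_\rho(x_{a,i}^{(1)})}\!\!\big(sV+\tfrac12\langle y-x_{a,i}^{(1)},\nabla V\rangle\big)(u_a^{(l)})^2 = s\delta_a^{2s}\mu_a^{(l)}\!\!\int_{B_\rho}\!(u_a^{(l)})^2+\big(\tfrac{N}{p+1}-\tfrac{N-2s}{2}\big)\!\!\int_{B_\rho}\!(u_a^{(l)})^{p+1}+\delta_a^{2s}\mathrm{Bdry}_i^{(l)},
\]
where $\mathrm{Bdry}_i^{(l)}$ collects the boundary contributions on $\partial''\mathbf{B}_\rho^+(x_{a,i}^{(1)})$ and $\partial B_\rho(x_{a,i}^{(1)})$ (the analogues of $W_1,W_2$ in \eqref{poh3}). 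Subtracting for $l=1,2$, applying the elementary identities $A^2-B^2=(A+B)(A-B)$ and $A^{p+1}-B^{p+1}=(p+1)\int_0^1(tA+(1-t)B)^p(A-B)\,dt$, and then dividing by $\|u_a^{(1)}-u_a^{(2)}\|_{L^\infty(\R^N)}$, converts $(u_a^{(l)})^2$- and $(u_a^{(l)})^{p+1}$-type terms into the $(u_a^{(1)}+u_a^{(2)})\xi_a$- and $\int_0^1(\cdots)^p\xi_a\,dt$-type terms appearing in \eqref{5.41}. The coefficient of $\int_{B_\rho}(u_a^{(2)})^2$ becomes a multiple of $\delta_a^{2s}(\mu_a^{(1)}-\mu_a^{(2)})/\|u_a^{(1)}-u_a^{(2)}\|_{L^\infty}$.

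Next I sum over $i$. The normalization $\int_{\R^N}(u_a^{(l)})^2=(a\delta_a^{2s})^{2/(p-1)}$, which holds with the same right-hand side for $l=1$ and $l=2$, forces $\int_{\R^N}(u_a^{(1)}+u_a^{(2)})\xi_a=0$; hence $\sum_i\int_{B_\rho}(u_a^{(1)}+u_a^{(2)})\xi_a$ reduces to a tail integral over $\R^N\setminus\cup_i B_\rho(x_{a,i}^{(1)})$, and the analogous tail identity $\sum_i\int_{B_\rho}(u_a^{(2)})^2=\int_{\R^N}(u_a^{(2)})^2-O(\text{tail})$ holds. Both tails are estimated using \eqref{decay} for $u_a^{(l)}$ and \eqref{xidecay} for $\xi_a$, yielding contributions of order $O(\delta_a^{2N+2s})$ (taking $\theta<s$ small). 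The remaining driving term $\delta_a^{2s}\cdot(\mu_a^{(1)}-\mu_a^{(2)})/\|\cdot\|\cdot\int_{\R^N}(u_a^{(l)})^2=\delta_a^{\frac{4s}{p-1}+2s}a^{\frac{2}{p-1}}(\mu_a^{(1)}-\mu_a^{(2)})/\|\cdot\|$ is then rewritten via \eqref{5.34} as the sum of the three $\int_{\R^N}$-terms on the right-hand side of \eqref{5.41}, while the local nonlinear term $(p+1)(\frac{N}{p+1}-\frac{N-2s}{2})\sum_i\int_{B_\rho}\int_0^1(\cdots)^p\xi_a$ is retained unchanged from the Pohozaev subtraction.

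The main technical obstacle is controlling the extended boundary pieces $\mathrm{Bdry}_i^{(l)}$ on $\partial''\mathbf{B}_\rho^+(x_{a,i}^{(1)})$, whose counterparts for a single solution were estimated in \eqref{poh2'}. The pointwise bounds on $|\tilde u_a^{(l)}|$, $|\nabla\tilde u_a^{(l)}|$ and $|t^{1-2s}\partial_\nu\tilde u_a^{(l)}|$ derived from the Poisson representation $\tilde u_a^{(l)}=\mathcal P_s[u_a^{(l)}]$ and the decay estimate \eqref{decay}, combined with the decay \eqref{xidecay} of $\xi_a$, give each boundary quadratic term a size controlled by $\delta_a^{2(N+s-\theta)}$. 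After the normalization by $\|u_a^{(1)}-u_a^{(2)}\|_{L^\infty}$ and multiplication by $\delta_a^{2s}$, all such boundary and tail contributions merge into the announced error $O(\delta_a^{2N+2s})$, which is the delicate bookkeeping step. Matching the resulting structure with the rewritten form coming from \eqref{5.34} is what produces exactly the right-hand side of \eqref{5.41}.
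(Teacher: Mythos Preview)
Your proposal is correct and follows essentially the same route as the paper: derive the local Pohozaev identity \eqref{poh3} for each $u_a^{(l)}$ on $\mathbf B_\rho^+(x_{a,i}^{(1)})$, subtract, divide by $\|u_a^{(1)}-u_a^{(2)}\|_{L^\infty}$, sum over $i$, and then replace the resulting term $\delta_a^{2s}(\mu_a^{(1)}-\mu_a^{(2)})\|u_a^{(1)}-u_a^{(2)}\|_{L^\infty}^{-1}\int_{\R^N}(u_a^{(1)})^2$ by the right-hand side of \eqref{5.34}, while the mass constraint kills $\mu_a^{(2)}\delta_a^{2s}\int_{\R^N}(u_a^{(1)}+u_a^{(2)})\xi_a$; the boundary pieces on $\partial''\mathbf B_\rho^+$ and $\partial B_\rho$ are bounded exactly as you indicate, using the Poisson-extension decay for $\tilde u_a^{(l)}$ and $\tilde\xi_a$ (note that the extended function $\tilde\xi_a$, not $\xi_a$ itself, enters these integrals), yielding the error $O(\delta_a^{2N+2s})$.
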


\begin{proof}
Since $u_a^{(l)}$ are the $k$-peak solutions of \eqref{eqdelta}--\eqref{eqnormdelta},              
we have the following Pohozaev identity:
\begin{align*}
\begin{split}
&-\delta_a^{2s}\int_{\partial'' {\bf B}_\rho(x_{a,i}^{(1)})}t^{1-2s}\frac{\partial\tilde u_a^{(l)}}{\partial\nu}\langle Y-X_{a,i},\nabla\tilde u_a^{(l)}\rangle
+\frac{\delta_a^{2s}}2\int_{\partial'' {\bf B}_\rho(x_{a,i}^{(1)})}t^{1-2s}|\nabla\tilde u_a^{(l)}|^2\langle Y-X_{a,i},\nu\rangle
\\
&-\frac{N-2s}2\delta_a^{2s}\int_{\partial'' {\bf B}_\rho(x_{a,i}^{(1)})}t^{1-2s}\frac{\partial\tilde u_a^{(l)}}{\partial\nu}\tilde u_a^{(l)}\\&
-\int_{\partial B_\rho(x_{a,i}^{(1)})}\Big(\frac{(u_a^{(l)})^{p+1}}{p+1}-\frac{\delta_a^{2s}(-\mu_a^{(l)}+V(x))}{2}(u_a^{(l)})^{2}
\Big)\langle y-x_{a,i},\nu\rangle
\\
 =&\int_{B_\rho(x_{a,i}^{(1)})}(\frac{N-2s}2-\frac N{p+1})(u_a^{(l)})^{p+1}+s\delta_a^{2s}(-\mu_a^{(l)}+V(x))(u_a^{(l)})^{2}\\&
 +\frac{\delta_a^{2s}}2\langle y-x_{a,i}^{(1)},\nabla V(x)\rangle
 (u_a^{(l)})^2,
\end{split}
\end{align*}
which can be rewritten as
\begin{align*}
\begin{split}
 &\delta_a^{2s}\int_{B_\rho(x_{a,i}^{(1)})}\big(sV(x)(u_a^{(l)})^{2}
 +\frac{1}2\langle y-x_{a,i}^{(1)},\nabla V(x)\rangle
 (u_a^{(l)})^2\big)\\
 &=\int_{B_\rho(x_{a,i}^{(1)})}\delta^{2s}_{a}s\mu_a^{(l)}(u_a^{(l)})^{2}+(\frac N{p+1}-\frac{N-2s}2)(u_a^{(l)})^{p+1}\\&+\delta_a^{2s}\int_{\partial'' \bf B_\rho(x_{a,i}^{(1)})}W_1^{(l)}
 +\delta_a^{2s}\int_{\partial B_\rho(x_{a,i}^{(1)})}W_2^{(l)},
\end{split}
\end{align*}
where
\begin{align*}
W_1^{(l)}=-t^{1-2s}\frac{\partial\tilde u_a^{(l)}}{\partial\nu}\langle Y-X_{a,i},\nabla\tilde u_a^{(l)}\rangle
+\frac{1}2t^{1-2s}|\nabla\tilde u_a^{(l)}|^2\langle Y-X_{a,i},\nu\rangle
-\frac{N-2s}2t^{1-2s}\frac{\partial\tilde u_a^{(l)}}{\partial\nu}\tilde u_a^{(l)},
\end{align*}
\begin{align*}
W_2^{(l)}=\Big(-\frac{(u_a^{(l)})^{p+1}}{\delta_a^{2s}(p+1)}+\frac{(-\mu_a^{(l)}+V(x))}{2}(u_a^{(l)})^{2}
\Big)\langle y-x_{a,i},\nu\rangle.
\end{align*}

Then, the above local Pohozaev identity implies that
\begin{align}\label{5.46}
\begin{split}
 &\delta_a^{2s}\int_{B_\rho(x_{a,i}^{(1)})}\Big(sV(x)
 +\frac{1}2\langle y-x_{a,i}^{(1)},\nabla V(x)\rangle\Big)
 (u_a^{(l)}+u_a^{(2)})\xi_a
 \\
=&\frac{(\mu_a^{(1)}-\mu_a^{(2)})\delta_a^{2s}}{\|u_a^{(l)}-u_a^{(2)}\|_{L^\infty}}\int_{B_\rho(x_{a,i}^{(1)})}(u_a^{(1)})^2+
\mu_a^{(2)}\delta_a^{2s}\int_{B_\rho(x_{a,i}^{(1)})}(u_a^{(1)}+u_a^{(2)})\xi_a\\&
+(\frac N{p+1}-\frac{N-2s}2)(p+1)\int_{B_\rho(x_{a,i}^{(1)})}\int_0^1(tu_a^{(1)}+(1-t)u_a^{(2)})^{p}\xi_a+J_{a,i,1}+J_{a,i,2},
\end{split}
\end{align}
where
\begin{align*}
\begin{split}
 &J_{a,i,1}=\int_{\partial'' \bf{B}_{\rho}(x_{a,i}^{(1)})}\frac{ \delta_a^{2s}(W_1^{(1)}-W_1^{(2)})}{\|u_a^{(l)}-u_a^{(2)}\|_{L^\infty}}\\
& =-\delta_a^{2s}\int_{\partial'' \bf{B}_{\rho}(x_{a,i}^{(1)})}t^{1-2s}\frac{\partial\tilde u_a^{(1)}}{\partial\nu}\langle Y-X_{a,i},\nabla\tilde\xi_a\rangle
-\delta_a^{2s}\int_{\partial'' \bf{B}_{\rho}(x_{a,i}^{(1)})}t^{1-2s}\frac{\partial\tilde \xi_a}{\partial\nu}\langle Y-X_{a,i},\nabla\tilde u_a^{(2)}\rangle\\&
+\frac{\delta_a^{2s}}2\int_{\partial'' \bf{ B}_{\rho}(x_{a,i}^{(1)})}t^{1-2s}\nabla(\tilde u_a^{(1)}+\tilde u_a^{(2)})\cdot\nabla\tilde \xi_a\langle Y-X_{a,i},\nu\rangle
\\
&-\frac{N-2s}2\delta_a^{2s}\int_{\partial'' \bf{B}_{\rho}(x_{a,i}^{(1)})}t^{1-2s}(\frac{\partial\tilde \xi_a}{\partial\nu}\tilde u_a^{(1)}
-\frac{\partial\tilde u_a^{(2)}}{\partial\nu}\tilde \xi_a)=O(\delta_a^{2s+2N}),
\end{split}
\end{align*}
and since $p+1>2$
\begin{align*}
\begin{split}
 &J_{a,i,2}=\int_{\partial B_\rho(x_{a,i}^{(1)})}\frac{ \delta_a^{2s}(W_2^{(1)}-W_2^{(2)})}{\|u_a^{(l)}-u_a^{(2)}\|_{L^\infty}}=
 O(\delta_a^{2s+2(N+2s-\theta)}+\delta_a^{(p+1)N+2ps})= O(\delta_a^{2s+2(N+2s-\theta)}).
\end{split}
\end{align*}

Summing \eqref{5.46} from $i=1$ to $i=k$ and considering the decay \eqref{decay} and \eqref{xidecay},
\begin{align*}
\begin{split}
 &\sum_{i=1}^k\delta_a^{2s}\int_{B_\rho(x_{a,i}^{(1)})}\Big(sV(x)
 +\frac{1}2\langle y-x_{a,i}^{(1)},\nabla V(x)\rangle\Big)
 (u_a^{(l)}+u_a^{(2)})\xi_a
 \\
=&\frac{(\mu_a^{(1)}-\mu_a^{(2)})\delta_a^{2s}}{\|u_a^{(l)}-u_a^{(2)}\|_{L^\infty}}\int_{\R^N}(u_a^{(1)})^2+
\mu_a^{(2)}\delta_a^{2s}\int_{\R^N}(u_a^{(1)}+u_a^{(2)})\xi_a\\&
+(\frac N{p+1}-\frac{N-2s}2)(p+1)\int_{B_{\rho}(x_{a,i}^{(1)})}\int_0^1(tu_a^{(1)}+(1-t)u_a^{(2)})^{p}\xi_a+O(\delta_a^{2N+2s}),
\end{split}
\end{align*}
which, combined with \eqref{5.34}, gives \eqref{5.41}, concluding the proof.

\end{proof}

In the following, we first show $\gamma_{a,i,0}=o(1)$ for $i=1,\ldots,k$.
\begin{Lem}\label{lemgamma1}
It holds that
\begin{align*}
\gamma_{a,i,0}=o(1), \  \ i=1,\ldots,k.
\end{align*}
\end{Lem}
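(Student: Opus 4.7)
The plan is to extract $\gamma_{a,i,0}$ from the Pohozaev-type identity~\eqref{5.41} (or its unsummed peak-by-peak analogue~\eqref{5.46}) by rescaling around each $x_{a,i}^{(1)}$ and matching orders in $\delta_a$. Three ingredients will be essential: (i) the decomposition $\bar\xi_{a,i}=\sum_{j=0}^N\gamma_{a,i,j}\Psi_j+\bar\zeta_{a,i}$ with $\|\bar\zeta_{a,i}\|=O(\delta_a^{2s})$ supplied by~\eqref{5.39}, so pairings against $\bar\zeta_{a,i}$ vanish in the limit; (ii) the orthogonality relations $\int_{\R^N}U\Psi_j=\int_{\R^N}U^p\Psi_j=0$ for $j=1,\dots,N$, which ensure that any $L^2$-pairing of $\bar\xi_{a,i}$ with $U$ or $U^p$ detects only $\gamma_{a,i,0}$; and (iii) the location estimates from Lemma~\ref{lem5.2}, giving $V(x_{a,i}^{(1)})=V_i+O(\delta_a^4)$ and $\nabla V(x_{a,i}^{(1)})=O(\delta_a^{2})$.

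I would first change variables $y=x_{a,i}^{(1)}+\delta_a P_i^{-1/(2s)}z$ in~\eqref{5.46} and Taylor-expand $sV(y)+\tfrac12\langle y-x_{a,i}^{(1)},\nabla V(y)\rangle=sV_i+O(\delta_a^{2}|z|+\delta_a^{2}|z|^{2})$. The LHS then yields a leading-order contribution of the form $2sV_i P_i^{\tfrac{2}{p-1}-\tfrac{N}{2s}}\delta_a^{N+2s}\int_{\R^N}U(z)\bar\xi_{a,i}(z)\,dz$; on the RHS, the piece $\mu_a^{(2)}\delta_a^{2s}\int_{B_\rho}(u_a^{(1)}+u_a^{(2)})\xi_a$ produces a parallel $\delta_a^{N+2s}$ contribution, using $-\mu_a^{(2)}\delta_a^{2s}=1+O(\delta_a^{2s})$ from~\eqref{pohdelta}, while the term $(\tfrac{N}{p+1}-\tfrac{N-2s}{2})(p+1)\int_{B_\rho}(\cdots)^p\xi_a$ scales as $\delta_a^{N}$ paired with $\int U^p\bar\xi_{a,i}$. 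The delicate term $(\mu_a^{(1)}-\mu_a^{(2)})\delta_a^{2s}\int u_a^{(1)}\xi_a$ will be handled by substituting the explicit expression for $(\mu_a^{(1)}-\mu_a^{(2)})\delta_a^{2s}/\|u_a^{(1)}-u_a^{(2)}\|_{L^\infty}$ derived in the proof of Lemma~\ref{lemcaga}; after this substitution it reduces, via the orthogonalities of step~(ii), to a further linear combination of $\gamma_{a,j,0}$'s. Combining with the mass constraint $\int(u_a^{(1)}+u_a^{(2)})\xi_a=0$, which after rescaling reads $\sum_j P_j^{\tfrac{2}{p-1}-\tfrac{N}{2s}}\gamma_{a,j,0}\int U\Psi_0=o(1)$, and with the Pohozaev identity for $U$ (the lemma relating $\int U^2$ and $\int U^{p+1}$ just before Proposition~\ref{prop3.4}), I expect the cross-terms on the RHS to absorb into the mass-constraint term. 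Applying the local Pohozaev~\eqref{5.46} for each peak $i$ individually will then isolate $\gamma_{a,i,0}$ as the solution of a non-degenerate scalar equation $C_i\gamma_{a,i,0}=o(1)$ with $C_i\neq0$.

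The main obstacle is the bookkeeping of the Lagrange-multiplier difference: $(\mu_a^{(1)}-\mu_a^{(2)})\delta_a^{2s}$ is \emph{not} automatically small compared to $\delta_a^{N+2s}$, so controlling this term forces one to substitute from the identity in Lemma~\ref{lemcaga} and then cancel the mass-producing pieces using $\int(u_a^{(1)}+u_a^{(2)})\xi_a=0$---exactly the point where the prescribed $L^2$-norm enters the uniqueness argument, by contrast with the unconstrained setting. A secondary subtlety is keeping track of cross-interactions between different peaks in the global integrals $\int_{\R^N}(u_a^{(l)})^p\xi_a$ on the RHS of~\eqref{5.41}; these are controlled by the decay estimate~\eqref{xidecay} together with $|x_{a,i}^{(1)}-x_{a,j}^{(1)}|\to|b_i-b_j|>0$ for $i\ne j$, so cross-contributions are negligible at order $\delta_a^{N+2s}$.
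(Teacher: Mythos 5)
Your strategy is in substance the paper's own: insert the decomposition \eqref{barzeta} with the estimate \eqref{5.39} into the local Pohozaev identity, eliminate the constraint-related terms through \eqref{5.34}, and match powers of $\delta_a$, the conclusion coming from a nonzero coefficient of $\gamma_{a,i,0}$ at order $\delta_a^{N}$ against all remaining terms of order $\delta_a^{N+2s}$. Within that shared skeleton, however, two steps of your proposal are genuine gaps rather than omitted routine detail.

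First, the decisive constant you call $C_i$ is never identified or shown to be nonzero, and this is the heart of the lemma. In the paper the only order-$\delta_a^{N}$ contributions are the nonlinear terms on the right of \eqref{5.41}; pairing them with $\Psi_0=U+\frac{p-1}{2s}\,x\cdot\nabla U$ produces the explicit coefficient $\bigl(N-(p-1)-\frac{(N-2s)(p+1)}{2}\bigr)\bigl(1-\frac{N(p-1)}{2s(p+1)}\bigr)\int_{\R^N}U^{p+1}$, rewritten via the Pohozaev identity for $U$ as $-\tilde\gamma a_*$ with $\tilde\gamma\neq0$, and it is exactly here that the restrictions on $N$ and $p$ are used; the potential terms and the $sV_i\int(u_a^{(1)}+u_a^{(2)})\xi_a$ piece enter only at order $\delta_a^{N+2s}$. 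Since $C_i$ is a specific algebraic combination of $N,s,p$ that is not automatically nonzero, asserting ``a non-degenerate scalar equation with $C_i\neq0$'' without this computation leaves the essential step unproved.

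Second, your treatment of the multiplier-difference term does not close as written. The term appearing in \eqref{5.41} is $(\mu_a^{(1)}-\mu_a^{(2)})\delta_a^{2s}\int_{\R^N}u_a^{(1)}\xi_a$ with the \emph{unnormalized} difference, while Lemma \ref{lemcaga} (through \eqref{ga}) only expresses the quotient $(\mu_a^{(1)}-\mu_a^{(2)})\delta_a^{2s}/\|u_a^{(1)}-u_a^{(2)}\|_{L^\infty(\R^N)}$; substituting it reintroduces the factor $\|u_a^{(1)}-u_a^{(2)}\|_{L^\infty(\R^N)}$, which is nowhere estimated, so what you obtain is not ``a further linear combination of the $\gamma_{a,j,0}$'' but such a combination multiplied by an uncontrolled bounded factor, and the proposed absorption by the rescaled mass constraint is unavailable uniformly, because $\int_{\R^N}U\Psi_0=\bigl(1-\frac{N(p-1)}{4s}\bigr)\int_{\R^N}U^2$ vanishes exactly in the mass-critical case $p-1=\frac{4s}{N}$. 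The paper's (much simpler) point is that \eqref{pohdelta} holds for both solutions with the same $\delta_a$ and the same constant $\gamma_1$, so $(\mu_a^{(1)}-\mu_a^{(2)})\delta_a^{2s}=O(\delta_a^{2s+2})$ and the whole term is negligible at order $\delta_a^{N}$; contrary to your remark, it is automatically small. Relatedly, the piece $\mu_a^{(2)}\delta_a^{2s}\int_{B_\rho}(u_a^{(1)}+u_a^{(2)})\xi_a$ in \eqref{5.46} is of order $\delta_a^{N}$ per peak, not $\delta_a^{N+2s}$; it is removed from the leading balance only by summing over the peaks and invoking the exact global identity $\int_{\R^N}(u_a^{(1)}+u_a^{(2)})\xi_a=0$ (this is precisely how \eqref{5.41} is assembled from \eqref{5.46} and \eqref{5.34}), so the bookkeeping of orders in your peak-by-peak version needs to be redone before the scalar equation for $\gamma_{a,i,0}$ can be isolated.
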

\begin{proof}
From \eqref{pohxb}, \eqref{barphi}, \eqref{5.26} and
\begin{align*}
u_a^{(2)}(\frac{\delta_a}{P_i^{\frac1{2s}}}x+x_{a,i}^{(1)})
=P_iU(x)+O(\frac{P_i^{\frac1{2s}}|x_{a,i}^{(1)}-x_{a,i}^{(2)}|}{\delta_a})+\bar\varphi_a(\frac{\delta_a}{P_i^{\frac1{2s}}}x+x_{a,i}^{(1)}),
\end{align*}
we obtain that
\begin{align*}
\begin{split}
&\int_{B_\rho(x_{a,i}^{(1)})}\Big(sV(x)
 +\frac{1}2\langle y-x_{a,i}^{(1)},\nabla V(x)\rangle\Big)
 (u_a^{(l)}+u_a^{(2)})\xi_a\\
 =&\int_{B_\rho(x_{a,i}^{(1)})}\Big(s(V(x)-V_i)
 +\frac{1}2\langle y-x_{a,i}^{(1)},\nabla V(x)\rangle\Big)\Big(\sum_{i=1}^k(\bar U_{\delta_a,x_{a,i}^{(1)}}+\bar U_{\delta_a,x_{a,i}^{(2)}})+\bar\varphi_a^{(1)}+\bar\varphi_a^{(2)})
 \Big)\xi_a\\&
 +\int_{B_\rho(x_{a,i}^{(1)})}sV_i (u_a^{(l)}+u_a^{(2)})\xi_a\\
 =&2\int_{B_\rho(x_{a,i}^{(1)})}\Big(s(V(x)-V_i)
 +\frac{1}2\langle y-x_{a,i}^{(1)},\nabla V(x)\rangle\Big)\bar U_{\delta_a,x_{a,i}^{(1)}}
 \xi_a\\&
 +\int_{B_\rho(x_{a,i}^{(1)})}sV_i (u_a^{(l)}+u_a^{(2)})\xi_a+O(\delta_a^{N+2s+2}).
 \end{split}
\end{align*}

From  \eqref{xb}  \eqref{pohxb}, \eqref{barzeta} and \eqref{5.39}, we obtain that
\begin{align*}
\begin{split}
&\int_{B_\rho(x_{a,i}^{(1)})}(V(x)-V_i)\bar U_{\delta_a,x_{a,i}^{(1)}}
 \xi_a\\
 =&\delta_a^N\int_{\R^N}(V(\frac{\delta_a}{P_i^{\frac1{2s}}}x+x_{a,i}^{(1)})-V_i) U(x)(\sum_{j=0}^N\gamma_{a,i,j}\Psi_j)
 +O(\delta_a^{N+2s+2})\\
 =&-\frac B2\Delta V(b_i)\gamma_{a,i,0}\delta_a^{N+2}+O(\delta_a^{N+2s+2}+\delta_a^{N+3}),
 \end{split}
\end{align*}
where $B$ is the constant as defined in \eqref{B}.

\vskip .1cm
Similarly,
\begin{align*}
\begin{split}
&\int_{B_\rho(x_{a,i}^{(1)})}\langle y-x_{a,i}^{(1)},\nabla V(x)\rangle\bar U_{\delta_a,x_{a,i}^{(1)}}
 \xi_a=-\frac B2\Delta V(b_i)\gamma_{a,i,0}\delta_a^{N+2}+O(\delta_a^{N+2s+2}+\delta_a^{N+3}).
 \end{split}
\end{align*}
Therefore, it holds
\begin{align}\label{5.57}
\begin{split}
LHS\ of\ \eqref{5.41}&=-\frac{2s+1}4B\Delta V(b_i)\gamma_{a,i,0}\delta_a^{N+2+2s}+O(\delta_a^{N+6s}+\delta_a^{N+3+2s})\\&
\quad+s\delta_a^{2s}V_i \int_{B_\rho(x_{a,i}^{(1)})}(u_a^{(l)}+u_a^{(2)})\xi_a.
\end{split}
\end{align}
Moreover, we have
\begin{align*}
\begin{split}
&\int_{B_\rho(x_{a,i}^{(1)})}(u_a^{(l)}+u_a^{(2)})\xi_a\\
&=2\gamma_{a,i,0}\delta_a^N\int_{\R^N}U(U+\frac{p-1}{2s}x\cdot\nabla U)+O
(|x_{a,i}^{(1)}-x_{a,i}^{(1)}|\delta_a^{N-1}+\delta_a^{\frac N2}\|\bar\varphi_a^{(2)}\|_{\delta_a})\\
&=2\gamma_{a,i,0}\delta_a^N(1-\frac{N(p-1)}{4s})\int_{\R^N}U^2+O(\delta_a^{N+2s+2}+\delta_a^{N+3}),
\end{split}
\end{align*}
which, combined with \eqref{5.57}, gives that
\begin{align*}
\begin{split}
LHS\ of\ \eqref{5.41}&=-\frac{2s+1}4B\Delta V(b_i)\gamma_{a,i,0}\delta_a^{N+2+2s}+O(\delta_a^{N+6s}+\delta_a^{N+3+2s})\\&
+sV_ia_*\gamma_{a,i,0}(1-\frac{N(p-1)}{4s})\delta_a^{N+2s}.
\end{split}
\end{align*}

On the other hand, for the RHS of \eqref{5.41},
\begin{align*}
\begin{split}
&\int_{\R^N}((u_a^{(1)})^p+(u_a^{(2)})^p)\xi_a
-(p+1)\int_{\R^N}\int_0^1(tu_a^{(1)}+(1-t)u_a^{(2)})^{p}\xi_a\\&
+(\frac N{p+1}-\frac{N-2s}2)(p+1)\int_{B_\rho(x_{a,i}^{(1)})}\int_0^1(tu_a^{(1)}+(1-t)u_a^{(2)})^{p}\xi_a\\
=&\Big(N-(p-1)-\frac{(N-2s)(p+1)}2\Big)\Big(1-\frac N{p+1}\Big)\gamma_{a,i,0}\int_{\R^N}U^{p+1}+O(\delta^{N+2+4s}_{a})\\
=&\Big(N-(p-1)-\frac{(N-2s)(p+1)}2\Big)\Big(1-\frac {N(p-1)}{2s(p+1)}\Big)\frac1{(\frac N{s(p+1)}-\frac N{2s}+1)}a_*\gamma_{a,i,0}
\\=&-(\tilde\gamma +o(1))a_*\gamma_{a,i,0}\delta_a^{N}
\end{split}
\end{align*}
with some constant $\tilde\gamma>0$ since $N\geq 2s+2$ and $p<\frac{N+2s}{N-2s}$.
Moreover, by \eqref{pohdelta},\eqref{pohxb} and \eqref{barphi}, it holds that
\begin{align*}
\begin{split}
&(\mu_a^{(1)}-\mu_a^{(2)})\delta_a^{2s}\int_{\R^N}u_a^{(1)}\xi_a=O(\delta_a^{N+6s}).
\end{split}
\end{align*}

To sum up, we finally obtain that
\begin{align*}
\begin{split}
&-\frac{2s+1}4B\Delta V(b_i)\gamma_{a,i,0}\delta_a^{N+2+2s}+O(\delta_a^{N+6s}+\delta_a^{N+3+2s}+\delta_a^{2N+2s})\\&
+sV_ia_*\gamma_{a,i,0}(1-\frac{N(p-1)}{4s})\delta_a^{N+2s}
=-(\tilde\gamma +o(1))a_*\gamma_{a,i,0}\delta_a^{N},
\end{split}
\end{align*}
which implies $\gamma_{a,i,0}=o(1)$.

\end{proof}

\begin{Lem}\label{lemgamma2}
It holds that $\gamma_{a,i,j}=o(1)$, for $i=1,\ldots,k$ and $j=1,\ldots,N$.

\end{Lem}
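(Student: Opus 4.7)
\smallskip

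\noindent\textbf{Plan of proof.} The strategy mirrors Lemma~\ref{lemgamma1}, but now we exploit the translation local Pohozaev identity (i.e.\ the one obtained by testing \eqref{eqexv} against $\partial_{x_j}\tilde u$, as in \eqref{poh1}) instead of the dilation one. Writing such an identity for each $u_a^{(l)}$, $l=1,2$, subtracting and dividing by $\|u_a^{(1)}-u_a^{(2)}\|_{L^\infty(\R^N)}$, and controlling all boundary contributions with the pointwise bounds \eqref{decay}--\eqref{xidecay} (exactly as in the derivation of $J_{a,i,1}, J_{a,i,2}$ in \eqref{5.46}), I obtain for every $i=1,\dots,k$ and $j=1,\dots,N$,
\begin{equation*}
\delta_a^{2s}\int_{B_\rho(x_{a,i}^{(1)})}\frac{\partial V(x)}{\partial x_j}\bigl(u_a^{(1)}+u_a^{(2)}\bigr)\xi_a
=O\bigl(\delta_a^{2N+2s-2\theta}\bigr),
\end{equation*}
so the main task is to extract $\gamma_{a,i,j}$ from the left-hand side.

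Next I rescale $y=x_{a,i}^{(1)}+\frac{\delta_a}{P_i^{1/2s}}x$, replace $u_a^{(1)}+u_a^{(2)}$ by $2\bar U_{\delta_a,x_{a,i}^{(1)}}$ modulo the $O(\delta_a^{\frac N2+2s+2})$ errors from \eqref{barphi} and $|x_{a,i}^{(1)}-x_{a,i}^{(2)}|$, use the decomposition \eqref{barzeta} with $\|\bar\zeta_{a,i}\|=O(\delta_a^{2s})$ and the fact $\gamma_{a,i,0}=o(1)$ from Lemma~\ref{lemgamma1}, and Taylor-expand $\frac{\partial V}{\partial x_j}$ around $b_i$. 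I will need the moment identities $\int U\partial_{x_m}U=0$, $\int x_l U\partial_{x_m}U=-\tfrac12\delta_{lm}\int U^2$ and $\int x_l U\Psi_0=0$, plus an analogous second–moment computation for $\int x_lx_m U\partial_{x_r}U$. Under condition (V) the tangential Hessian of $V$ at $b_i$ vanishes (since $V\equiv V_i$ on $\Gamma_i$), so the second-order Taylor contribution only isolates the normal component $\gamma_{a,i}^{\nu}:=\langle(\gamma_{a,i,1},\dots,\gamma_{a,i,N}),\nu_i\rangle$, with coefficient proportional to $\frac{\partial^2V}{\partial\nu_i^2}(b_i)\neq0$; this immediately gives $\gamma_{a,i}^\nu=o(1)$.

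For the $N-1$ tangential components $\gamma_{a,i,l}^\tau:=\langle(\gamma_{a,i,1},\dots,\gamma_{a,i,N}),\tau_{i,l}\rangle$ the second-order Taylor term is silent and I must push the expansion of $V$ one order further, tracking the cubic terms. Parameterising $\Gamma_i$ locally and using $V|_{\Gamma_i}\equiv V_i$ together with $\nabla V(b_i)=0$, the cubic contribution can be rewritten (via the moment $\int x_lx_mU\partial_{x_r}U$ and integration by parts) in terms of the tangential third derivatives of $V$; after separating the part that comes from the curvature of $\Gamma_i$ relative to the ambient basis, the coefficient matrix acting on $(\gamma_{a,i,1}^\tau,\dots,\gamma_{a,i,N-1}^\tau)$ becomes precisely
\[
\Bigl(\tfrac{\partial^2\Delta V(b_i)}{\partial\tau_{i,l}\partial\tau_{i,j}}\Bigr)_{1\leq l,j\leq N-1}+\tfrac{\partial\Delta V(b_i)}{\partial\nu_i}\,\mathrm{diag}(\kappa_{i,1},\dots,\kappa_{i,N-1}),
\]
which is non-singular by the hypothesis of Theorem~\ref{th4}. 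Combined with the already proved $\gamma_{a,i}^\nu=o(1)$, I conclude $\gamma_{a,i,j}=o(1)$ for all $i,j$.

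The main obstacle is precisely the tangential directions: the degeneracy of $V$ along $\Gamma_i$ forces the third-order expansion, and one must identify the resulting coefficient matrix with the non-degeneracy matrix of Theorem~\ref{th4}. This requires carefully untangling the interaction between the intrinsic derivatives of $\Delta V$ on $\Gamma_i$ and the principal curvatures $\kappa_{i,j}$, using the Gauss–Weingarten relations together with $\nabla V(b_i)=0$, and keeping track of the error terms $O(\delta_a|x_{a,i}^{(1)}-b_i|)=O(\delta_a^{3})$, $O(\|\bar\zeta_{a,i}\|)=O(\delta_a^{2s})$, and the remainder from $\gamma_{a,i,0}\Psi_0$. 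Once these are shown to be of strictly lower order than $\delta_a^{N+2s+2}$ (the magnitude of the dominant cubic term), the non-singularity of the above matrix closes the argument.
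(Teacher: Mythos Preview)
Your overall architecture (translation Pohozaev identity, normal direction first, tangential directions via a higher-order expansion, closing with the non-singular matrix of Theorem~\ref{th4}) coincides with the paper's, and the final coefficient matrix you write down is correct. However, the tangential step contains a concrete gap. The ``second moment'' you single out,
\[
\int_{\R^N} x_l x_m\, U\,\partial_{x_r} U\,dx,
\]
is identically zero for every choice of $l,m,r$: since $U$ is radial, the integrand equals $x_l x_m x_r\, U U'(|x|)/|x|$, which is odd in at least one coordinate. Hence the ``cubic'' ($D^3V$) contribution you plan to use contributes nothing against $\gamma_{a,i,q}\partial_{x_q}U$ for $q\ge 1$, and your stated dominant order $\delta_a^{N+2s+2}$ is one power of $\delta_a$ too optimistic. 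One must go to the \emph{fourth} derivatives of $V$ and the third moment $\int y_l y_m y_s\, U\,\partial_{x_q}U$, which is nonzero and produces the block $\bigl(\partial^2\Delta V(b_i)/\partial\tau_{i,l}\partial\tau_{i,j}\bigr)$ at the correct order $\delta_a^{N+3}$.

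The curvature block $\partial_{\nu_i}\Delta V(b_i)\,\mathrm{diag}(\kappa_{i,1},\dots,\kappa_{i,N-1})$ also does not come, as you suggest, from third derivatives of $V$ at $b_i$. In the paper the expansion is taken about $x_{a,i}^{(1)}$ (not $b_i$): the coefficient $\partial^2 V/\partial\tau_{a,i,l}\partial\tau_{a,i,j}$ vanishes at $\bar x_{a,i}^{(1)}\in\Gamma_i$, but the precise normal displacement $x_{a,i}^{(1)}-\bar x_{a,i}^{(1)}$ from \eqref{xb} together with \eqref{ab7-19-29} gives
\[
\frac{\partial^2 V(x_{a,i}^{(1)})}{\partial\tau_{a,i,l}\partial\tau_{a,i,j}}
=\frac{B}{2a_*}\,\frac{\partial\Delta V(b_i)}{\partial\nu_i}\,\kappa_{i,l}(b_i)\,\delta_{lj}\,\delta_a^{2}+o(\delta_a^{2}),
\]
and pairing this with the first moment $\int y_l U\partial_{x_q}U=-\tfrac12 a_*\delta_{lq}$ yields the curvature block at $\delta_a^{N+3}$, matching the $D^4V$ block. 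Your plan of ``separating the curvature part'' out of cubic terms at $b_i$ is therefore not the right mechanism; to make an expansion about $b_i$ work you would instead have to track the cross term $D^3V(b_i)\cdot(x_{a,i}^{(1)}-b_i)$ at order $\delta_a^3$ and verify (using both lines of \eqref{xb}) that it reproduces exactly the curvature block, in addition to pushing to $D^4V$ for the other block.
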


\begin{proof}
This Lemma can be proved just following that in \cite{LPY19}, so we only sketch the main steps as follows.

\vskip 0.2cm

 \noindent\textbf{Step 1:} To prove $\gamma_{a,i,N}=O(\delta_a)$ for $i=1,\cdots,k$.

\vskip 0.2cm

On one hand,
using \eqref{poh2}, we deduce
\begin{equation}\label{3-18}
\displaystyle \int_{B_\rho(x_{a,i}^{(1)})}\frac{\partial V(x)}{\partial  \nu_{a,i}}A_a(x)\xi_{a}=
O\big(\delta_a^{2N}\big),
\end{equation}
where $\nu_{a,i}$ is the outward unit vector of $\partial B_\rho(x_{a,i}^{(1)})$ at $x$, $
A_a(x)=\displaystyle\sum^2_{l=1}u_{a}^{(l)}(x)$.

On the other hand, by \eqref{pohxb}, and the Taylor expansions
\begin{equation}\label{3--18}
\begin{split}
 \displaystyle \int_{B_\rho(x_{a,i}^{(1)})}&\frac{\partial V(x)}{\partial  \nu_{a,i}}A_a(x)\xi_{a} \\=&
\int_{\R^N}  \frac{\partial V(x_{a,i}^{(1)})}{\partial  \nu_{a,i}} A_a(x)\xi_{a}+\int_{\R^N} \Bigl\langle \nabla \frac{\partial V(x_{a,i}^{(1)})}{\partial  \nu_{a,i}}, x-x_{a,i}^{(1)}\Bigr\rangle A_a(x)\xi_{a}
 +O\big(\delta_a^{N+2}\big)\\=&
-\frac{\partial^2 V(x_{a,i}^{(1)})}{\partial  \nu^2_{a,i}} a_*\gamma_{a,i,N} \delta_a^{N+1}+O\big(\delta_a^{N+2}\big).
\end{split}
\end{equation}
Then \eqref{3-18} and \eqref{3--18} imply  $\gamma_{a,i,N}=O(\delta_a)$.

\medskip

\noindent\textbf{Step 2:} To prove  $\gamma_{a,i,j}=o(1)$ for $i=1,\cdots,k$ and $j=1,\cdots,N-1$.

\smallskip

Similar to \eqref{3-18}, we first have
\begin{equation}\label{3.-14}
 \int_{B_\rho(x_{a,i}^{(1)})} \frac{\partial V(x)}{\partial \tau_{a,i,j}}A_a(x)\xi_{a}=O\big(\delta_a^{2N}\big),
 ~\mbox{for}~i=1,\cdots,k,~~j=1,\cdots,N-1.
 \end{equation}
Using suitable rotation, we assume that $\tau_{a,i,1} =(1, 0,\cdots, 0), \cdots$, $\tau_{a,i, N-1} =(0,\cdots,0, 1,  0)$ and $\nu_{a,i} =(0,\cdots,0,1)$.
Under the condition \textup{($V$)}, we know
\begin{equation}\label{1-16-8}
\begin{split}
 \frac{\partial V(\delta_a x+ x^{(1)}_{a,i})}{\partial \tau_{a,i,j}}=&
\delta_a\sum^N_{l=1}\frac{\partial^2 V(x^{(1)}_{a,i})}{\partial x_l \partial \tau_{a,i,j} } x_l+\frac{\delta_a^2}{2}
\sum^N_{m=1}\sum^N_{l=1}\frac{\partial^3 V(x^{(1)}_{a,i})}{\partial x_m  \partial x_l \partial \tau_{a,i,j}} x_m x_l\\&+
\frac{\delta^3_a}{6}\sum^N_{s=1}\sum^N_{m=1}
\sum^N_{l=1}\frac{\partial^4 V(x^{(1)}_{a,i})}{\partial x_s \partial x_l \partial x_m \partial \tau_{a,i,j}} x_s x_lx_m
+o\big(\delta_a^3|x|^3\big),~\mbox{in}~B_{d{\delta_a^{-1}}}(0).
\end{split}\end{equation}
By \eqref{necess}, \eqref{xb}, \eqref{barphi}, \eqref{barzeta},  and  the symmetry of $\varphi_j(x)$, we find, for $j=1,\cdots,N-1$,
\begin{equation*}
\begin{split}
\sum^N_{m=1}\sum^N_{l=1}&\frac{\partial^3 V(x^{(1)}_{a,i})}{\partial x_m  \partial x_l \partial \tau_{a,i,j}}\int_{B_{d\delta_a^{-1}}(0)} A_a(\delta_a x+ x^{(1)}_{a,i}) \bar \xi_{a,i} x_m x_l\\
=& B\gamma_{a,i,0} \frac{\partial \Delta V(x^{(1)}_{a,i})}{\partial \tau_{a,i,j}} +O(\delta_a^{2s})=
O\big(|x^{(1)}_{a,i}-b_i|\big) +O(\delta_a^{2s})=O(\delta_a^{2s}).
\end{split}
\end{equation*}
Also from Step 1, we get
\begin{equation*}
\begin{split}
& \sum^N_{s=1}\sum^N_{m=1}\sum^N_{l=1}\frac{\partial^4 V(x^{(1)}_{a,i})}{\partial x_s \partial x_l \partial x_m \partial \tau_{a,i,j}}
 \int_{B_{d\delta_a^{-1}}(0)} A_a(\delta_a x+ x^{(1)}_{a,i}) \bar \xi_{a,i}x_s x_lx_m
\\=&
-3B\Big(\sum_{q=1}^{N-1}\frac{\partial^2 \Delta V(b_i)}{\partial \tau_{a, i,q} \partial \tau_{a,i, j}}  \gamma_{a,i,q}\Big)+o\big(1\big).
\end{split}
\end{equation*}
By \eqref{ab7-19-29},  we estimate
\[
\frac{\partial^2 V(x^{(1)}_{a,i})}{\partial x_l \partial \tau_{a,i, j} }=  - \frac{\partial V(x^{(1)}_{a,i})}{ \partial \nu_{a,i} }
\kappa_{i,l}(x^{(1)}_{a,i})\delta_{lj},\quad l,j=1,\cdots,N-1.
\]
 Since  $\frac{\partial V(\bar x^{(1)}_{a,i})}{ \partial \nu_{a,i} }=0$, from \eqref{xb}, we find
\begin{equation}\label{lll}
\begin{split}
\frac{\partial^2 V(x^{(1)}_{a,i})}{\partial x_l \partial \tau_{a,i,j} }=\frac{B}{2a_*}\frac{\partial  \Delta V(b_i)}{\partial \nu_i}  \delta_a^{2}\kappa_{i,l}(b_i)\delta_{lj}+o(\delta_a^2).
\end{split}
\end{equation}
Therefore from \eqref{barzeta}, \eqref{barphi} and \eqref{lll}, we get
\begin{equation}\label{5-16-8}
\begin{split}
\sum^N_{l=1} &\frac{\partial^2 V(x^{(1)}_{a,i})}{\partial x_l \partial \tau_{a,i,j} }\int_{B_{d\delta_a^{-1}}(0)} A_a(\delta_a x+ x^{(1)}_{a,i}) \bar \xi_{a,i} x_l
= -\frac{B}{2}\frac{\partial  \Delta V(b_i)}{\partial \nu_i}  \delta_a^{2}\kappa_{i,j}(b_i)\gamma_{a,i,j}
+o(\delta_a^2).
\end{split}\end{equation}
Combining \eqref{1-16-8}--\eqref{5-16-8}, we obtain
\begin{equation}\label{6-16-8}
\begin{split}
\int_{B_\rho(x_{a,i}^{(1)})} & \frac{\partial V(x)}{\partial \tau_{a,i,j}}A_a(x)\xi_{a}\\=&-\frac{B}{2}\frac{\partial  \Delta V(b_i)}{\partial \nu_i}  \delta_a^{N+3}\kappa_{i,j}(b_i)\gamma_{a,i,j}
-\frac{B}{2}\Big(\sum_{l=1}^{N-1}\frac{\partial^2 \Delta V(b_i)}{\partial \tau_{i,l} \partial \tau_{ i,j}} \gamma_{a,i,j} \Big) \delta_a^{N+3}+o(\delta_a^{N+3}).
 \end{split}\end{equation}
 From \eqref{3.-14} and \eqref{6-16-8}, we find
\[
\frac{\partial  \Delta V(b_i)}{\partial \nu_i} \kappa_{i,j}(b_i)\gamma_{a,i,j}
+\Big(\sum_{l=1}^{N-1}\frac{\partial^2 \Delta V(b_i)}{\partial \tau_{i,l} \partial \tau_{i,j}} \gamma_{a,i,l}\Big) =o(1),
\]
which implies $\gamma_{a,i,j}=o(1)$ for $i=1,\cdots,k$ and $j=1,\cdots,N-1$.
\end{proof}

Now we are ready to prove Theorem \ref{th4}.
\begin{proof}[\textbf{Proof of Theorem \ref{th4}:}]
First, for large fixed $R$,  \eqref{Ca} and \eqref{ga} give
$$
C_{a}(x) \ge \frac12,   \;\; |g_a(x)|\le C \sum^k_{i=1}\frac{\delta_a^{N+2s}}{|x-x^{(1)}_{a,i}|^{N+2s}},\quad x\in \mathbb R^N \backslash  \bigcup ^k_{i=1}B_{R\delta_a}(x^{(1)}_{a,i}).$$
  Using the comparison principle associated to the nonlocal operator $(-\Delta)^s$, we get
\begin{equation*}
 \xi_a(x)= o(1),~\mbox{in}~ \R^N\backslash  \bigcup ^k_{i=1}B_{R\delta_a}(x^{(1)}_{a,i}).
\end{equation*}

On the other hand, it follows from \eqref{barzeta}, \eqref{5.39}, Lemmas \ref{lemgamma1} and \ref{lemgamma2} that
\[
\xi_{a}(x)=o(1),~\mbox{in}~ \bigcup ^k_{i=1}B_{R\delta_a}(x^{(1)}_{a,i}).
\]
This is in contradiction with $\|\xi_{a}\|_{L^{\infty}(\R^N)}=1$.
So $u^{(1)}_{a}(x)\equiv u^{(2)}_{a}(x)$ for $a\rightarrow a_0$.
\end{proof}

\section*{Appendix}

\appendix

\section{Basic estimates }

\renewcommand{\theequation}{A.\arabic{equation}}

\begin{Lem}[c.f. \cite{wy10}]\label{lemA1}
Let $\alpha,\beta\geq1$ be two constants. For any $0<\delta\leq\min\{\alpha,\beta\}$, and $x_1\neq x_2$, it holds that
\begin{align*}
&\frac1{(1+|y-x_1|)^\alpha(1+|y-x_2|)^\beta}\leq
\frac C{|x_1-x_2|^\delta}\Big(\frac1{(1+|y-x_1|)^{\alpha+\beta-\delta}}+\frac1{(1+|y-x_2|)^{\alpha+\beta-\delta}}\Big).
\end{align*}
   \end{Lem}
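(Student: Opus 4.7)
The plan is a standard dichotomy argument based on which of the two distances $|y-x_1|$ and $|y-x_2|$ is larger, using the triangle inequality to extract the factor $|x_1-x_2|^{-\delta}$ and the assumption $\delta\le\min\{\alpha,\beta\}$ to consolidate the remaining exponents. Since the right-hand side is already symmetric in the two centers, I will reduce everything to estimating the left-hand side on the region where one of them dominates.

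Split $\R^N=\Omega_1\cup\Omega_2$, where $\Omega_1=\{y:|y-x_1|\ge|y-x_2|\}$ and $\Omega_2$ is the complement. By the triangle inequality, on $\Omega_1$ one has $2|y-x_1|\ge|y-x_1|+|y-x_2|\ge|x_1-x_2|$, hence $1+|y-x_1|\ge c\,|x_1-x_2|$ for an absolute constant $c>0$; the symmetric bound $1+|y-x_2|\ge c\,|x_1-x_2|$ holds on $\Omega_2$.

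On $\Omega_1$ I peel the factor $(1+|y-x_1|)^{-\delta}$ out of the left-hand side, which by the preceding observation contributes $C\,|x_1-x_2|^{-\delta}$. The residue
\[
\frac{1}{(1+|y-x_1|)^{\alpha-\delta}(1+|y-x_2|)^\beta}
\]
has a non-negative exponent $\alpha-\delta\ge0$ thanks to $\delta\le\alpha$, so the inequality $|y-x_1|\ge|y-x_2|$ together with the monotonicity of $t\mapsto(1+t)^{\alpha-\delta}$ gives
\[
\frac{1}{(1+|y-x_1|)^{\alpha-\delta}(1+|y-x_2|)^\beta}\le\frac{1}{(1+|y-x_2|)^{\alpha+\beta-\delta}}.
\]
Hence on $\Omega_1$ the left-hand side is bounded by the second summand on the right-hand side. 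A symmetric argument on $\Omega_2$, using $\delta\le\beta$ to keep $\beta-\delta\ge0$, produces the first summand, and taking the larger of the two absolute constants gives the global estimate.

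The argument is elementary and has no serious obstacle; the only subtle point is to work with the regularized distance $1+|y-x_j|$ rather than $|y-x_j|$ throughout, so that the dichotomy yields a uniform constant irrespective of whether $|x_1-x_2|$ is small or large. The role of the hypothesis $\delta\le\min\{\alpha,\beta\}$ is precisely to guarantee that the residual exponents $\alpha-\delta$ and $\beta-\delta$ appearing after peeling are non-negative, which is exactly what is needed to invoke monotonicity in the consolidation step.
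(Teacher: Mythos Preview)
Your argument is correct and is exactly the standard dichotomy proof of this inequality; the paper does not give its own proof but simply cites \cite{wy10}, where the same splitting-by-dominant-center argument is used. Nothing further is needed.
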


 \begin{Lem}[c.f. \cite{gnnt}]\label{lemA3}
Let $\rho>\theta>0$ be two constants. Suppose $(y-x)^2+t^2\geq\rho^2,t>0$ and $\alpha>N$. Then, when $0<\beta<N$, it holds that
\begin{align}\label{A1}
&\int_{\R^N}\frac1{(t+|z|)^\alpha|y-z-x|^\beta}\leq C\left(
\frac1{(1+|y-x|)^{\beta}}\frac1{t^{\alpha-N}}+\frac1{(1+|y-x|)^{\alpha+\beta-N}}
\right),\end{align}
and
\begin{align*}
&\int_{\R^N\setminus B_\theta(0)}\frac1{(t+|z|)^\alpha|y-z-x|^\beta}\leq
\frac C{(1+|y-x|)^{\beta}}\frac1{\theta^{\alpha-N}};\end{align*}
when $N<\beta$, we have that
\begin{align*}
&\int_{\R^N\setminus B_\theta(y-x)}\frac1{(t+|z|)^\alpha|y-z-x|^\beta}\leq C\left(
\frac1{(1+|y-x|)^{\beta}}\frac1{t^{\alpha-N}}+\frac1{(1+|y-x|)^{\beta}}\frac1{\theta^{\beta-N}}
\right),
\end{align*}
where $C>0$ is a constant independent of $\theta$.
\end{Lem}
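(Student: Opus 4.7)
\medskip

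\noindent\textbf{Proof proposal for Lemma \ref{lemA3}.}  Write $w=y-x$ and note that the hypothesis $|w|^2+t^2\ge \rho^2$ says that the two potential singularities of the integrand, located at $z=0$ and $z=w$, cannot simultaneously be close to $0$ in the sense that either $|w|$ is bounded below or $t$ is bounded below.  The plan is to split the domain of integration according to proximity to each singularity and estimate each piece directly, and finally to convert bounds of the form $|w|^{-\ast}$ into the sharper $(1+|w|)^{-\ast}$ using the hypothesis.

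First I would treat the case $0<\beta<N$ and the full integral over $\R^N$.  Decompose
\[
\R^N=\underbrace{\{|z|\le |w|/2\}}_{I_1}\cup\underbrace{\{|z-w|\le |w|/2\}}_{I_2}\cup\underbrace{\{|z|>|w|/2,\ |z-w|>|w|/2\}}_{I_3}.
\]
On $I_1$ use $|w-z|\ge |w|/2$ to bound the second factor by $C|w|^{-\beta}$, and then use $\alpha>N$ together with $\int_{\R^N}(t+|z|)^{-\alpha}dz\le C t^{N-\alpha}$ to obtain a contribution $\le C|w|^{-\beta}t^{N-\alpha}$.  On $I_2$ use $|z|\ge |w|/2$ to bound the first factor by $C|w|^{-\alpha}$ and $\int_{|z-w|\le |w|/2}|w-z|^{-\beta}dz\le C|w|^{N-\beta}$ (here $\beta<N$ is used) to obtain $C|w|^{N-\alpha-\beta}$.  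For the remaining piece $I_3$, subdivide further into $|z|\ge 2|w|$ (where $|w-z|\sim |z|$ so the integrand is bounded by $C(t+|z|)^{-\alpha}|z|^{-\beta}$ and integration yields $C|w|^{N-\alpha-\beta}$) and $|w|/2<|z|<2|w|$ (where the domain has volume $\lesssim |w|^N$ and the integrand is $\le C(t+|w|)^{-\alpha}|w|^{-\beta}$, which gives the same type of bound by comparing with both $|w|^\alpha$ and $t^\alpha$).  Collecting these pieces produces \eqref{A1}.

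For the second inequality, repeat the same splitting but restrict to $|z|\ge\theta$.  The only change is that in $I_1$ the estimate $\int_{\R^N}(t+|z|)^{-\alpha}dz\le Ct^{N-\alpha}$ is replaced by $\int_{|z|\ge\theta}(t+|z|)^{-\alpha}dz\le C\theta^{N-\alpha}$, valid because $\alpha>N$ (split cases $t\le\theta$ and $t>\theta$).  The $I_2$ and $I_3$ contributions are handled as before and are readily shown to be dominated by the advertised bound $C(1+|y-x|)^{-\beta}\theta^{N-\alpha}$.  For the case $\beta>N$, the singularity at $z=w$ is no longer integrable, so we restrict to $\R^N\setminus B_\theta(w)$.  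The only modification is in the $I_2$ estimate, where now $\int_{\theta\le|z-w|\le |w|/2}|w-z|^{-\beta}dz\le C\theta^{N-\beta}$; the bound on the $I_1$ and $I_3$ pieces goes through verbatim.

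Finally I would upgrade each bound of the form $|w|^{-\gamma}$ to $(1+|w|)^{-\gamma}$.  If $|w|\ge 1$ these are equivalent up to a constant.  If $|w|<1$, the hypothesis $|w|^2+t^2\ge\rho^2$ forces $t\ge\sqrt{\rho^2-1}>0$ (assume $\rho>1$, otherwise shrink $\theta$ accordingly), so that $t$ is bounded below by a constant; this permits absorbing the bounded factor $(1+|w|)^{-\gamma}$ into the constant $C$, completing the proof.  The main technical obstacle is the bookkeeping across sub-regions of $I_3$ and ensuring that the borderline regime $|w|$ small is consistently absorbed via the $t$-lower bound furnished by the hypothesis; the rest is routine polar-coordinate integration using the condition $\alpha>N$ (respectively $\beta>N$ in the last case).
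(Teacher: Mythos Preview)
The paper does not supply its own proof of this lemma; it is quoted verbatim from \cite{gnnt}, so there is no in-paper argument to compare your proposal against.  Your domain-splitting approach (near $z=0$, near $z=w$, and the far region) is the standard route and is essentially correct for all three inequalities.

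One point needs repair.  In your final step you write ``assume $\rho>1$, otherwise shrink $\theta$ accordingly''.  This does not make sense: $\theta$ plays no role in the first inequality \eqref{A1}, and in any case $\theta<\rho$ is a hypothesis you cannot alter.  The correct way to pass from $|w|^{-\gamma}$ to $(1+|w|)^{-\gamma}$ is to observe that the constant $C$ is permitted to depend on $\rho$ (the lemma only claims independence from $\theta$).  Then split into $|w|\ge \rho/\sqrt2$, where $|w|^{-\gamma}\le C_\rho(1+|w|)^{-\gamma}$, and $|w|<\rho/\sqrt2$, where the hypothesis forces $t\ge\rho/\sqrt2$ and both sides of each inequality are bounded above and below by positive constants depending only on $\rho$.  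With this correction your argument goes through.  A secondary remark: in the third inequality your $I_2$ bound $C|w|^{-\alpha}\theta^{N-\beta}$ is only dominated by $(1+|w|)^{-\beta}\theta^{N-\beta}$ when $\alpha\ge\beta$; this is satisfied in the paper's applications (where $\alpha=N+2s$ and $\beta=N+2s-\theta$), but you should either note this implicit assumption or refine the $I_2$ estimate.
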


\begin{Rem}\label{remA2}
For some $\epsilon\rightarrow0$,
in the case of $\beta<N$, \eqref{A1} implies directly that
\begin{align*}
\begin{split}
 \int_{\R^N}\frac1{(t+|z|)^\alpha(1+\frac{|y-z-x|}{\epsilon})^\beta}&\leq  C\epsilon^\beta \int_{\R^N}\frac1{(t+|z|)^\alpha|y-z-x|^\beta}\\
&\leq C\epsilon^\beta\left(
\frac{1}{(1+|y-x|)^{\beta}}\frac1{t^{\alpha-N}}+\frac{1}{(1+|y-x|)^{\alpha}}
\right).
\end{split}
\end{align*}

While in the case of $\beta>N$, following the proof in   \cite{gnnt}, we can get indeed that,
\begin{align*}
\begin{split}
 \int_{\R^N}\frac1{(t+|z|)^\alpha(1+\frac{|y-z-x|}{\epsilon})^\beta}&\leq C\left(
\frac{\epsilon^\beta}{(1+|y-x|)^{\beta}}\frac1{t^{\alpha-N}}+\frac{\epsilon^N}{(1+|y-x|)^{\alpha}}
\right)\\
&\leq C\epsilon^N\left(
\frac{1}{(1+|y-x|)^{\beta}}\frac1{t^{\alpha-N}}+\frac{1}{(1+|y-x|)^{\alpha}}
\right).
\end{split}
\end{align*}
\end{Rem}

\medskip
Now let $\Gamma\in C^2$ be a closed hypersurface in $\mathbb R^N$. For $y\in \Gamma$, let $\nu(y)$ and $T(y)$ denote respectively the outward unit  normal to $\Gamma$ at $y$ and the tangent hyperplane to $\Gamma$ at $y$. The curvatures of $\Gamma$ at a fixed point $y_0\in\Gamma$ are determined as follows. By a rotation of coordinates, we can assume that
$y_0=0$ and $\nu(0)$ is the $x_N$-direction, and $x_j$-direction is the $j$-th principal direction.
In some neighborhood $\mathcal{N}=\mathcal{N}(0)$ of $0$,   we have
\[
\Gamma=\bigl\{  x:   x_N=\varphi(x')\bigr\},
\]
where $x'=(x_1,\cdots,x_{N-1})$,
 \[
 \varphi(x') =\frac12 \sum_{j=1}^{N-1} \kappa_j x_j^2 +  O\big(|x'|^3\big),
 \]
 where
 $\kappa_j$, is  the $j$-th principal curvature of  $\Gamma$ at $0$.
 The Hessian matrix $[D^2 \varphi(0)]$  is given by
\begin{equation*}
[D^2 \varphi(0)]=diag [\kappa_1,\cdots,\kappa_{N-1}].
\end{equation*}
Suppose that  $W$ is a smooth function, such  that  $W(x)=a$  for all $x\in \Gamma$.

\begin{Lem}[c.f. \cite{LPY19}]
We have
 \begin{equation*}
 \frac{\partial W(x)}{\partial x_l}\Bigr|_{x=0}=0, ~\mbox  ~l=1,\cdots,N-1,
 \end{equation*}
  \begin{equation}\label{ab7-19-29}
  \frac{\partial^2 W(x)}{\partial x_m\partial x_l }\Bigr|_{x=0}=-\frac{\partial W\big(x\big)}{\partial x_N}\Bigr|_{x=0}\kappa_i \delta_{ml}, ~\mbox{for}~m, l=1,\cdots, N-1,
 \end{equation}
 where  $\kappa_1,\cdots,\kappa_{N-1}$, are  the principal curvatures of  $\Gamma$ at $0$.
\end{Lem}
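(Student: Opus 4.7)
The plan is to exploit the fact that $W$ is constant along $\Gamma$ together with the explicit local parametrization of $\Gamma$ as a graph $x_N = \varphi(x')$ near the origin, in the coordinate system aligned with the principal directions. In particular, I will use the key facts from the given expansion $\varphi(x') = \frac{1}{2}\sum_{j=1}^{N-1}\kappa_j x_j^2 + O(|x'|^3)$:
$$\frac{\partial \varphi}{\partial x_l}(0)=0, \qquad \frac{\partial^2 \varphi}{\partial x_m \partial x_l}(0)=\kappa_l\,\delta_{ml},\quad m,l=1,\ldots,N-1.$$

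First, I would observe that $W(x',\varphi(x')) \equiv a$ for all $x'$ in a neighborhood of $0$ in $\mathbb{R}^{N-1}$. Differentiating this identity in $x_l$ yields
$$\frac{\partial W}{\partial x_l}(x',\varphi(x')) + \frac{\partial W}{\partial x_N}(x',\varphi(x'))\,\frac{\partial \varphi}{\partial x_l}(x') = 0.$$
Evaluating at $x'=0$ and using $\frac{\partial \varphi}{\partial x_l}(0)=0$ immediately gives $\frac{\partial W}{\partial x_l}(0)=0$ for $l=1,\ldots,N-1$, proving the first assertion.

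Next, differentiating the above identity once more in $x_m$ and then evaluating at $x'=0$, the terms containing a single factor $\frac{\partial \varphi}{\partial x_l}(0)$ or $\frac{\partial \varphi}{\partial x_m}(0)$ drop out, and one is left with
$$\frac{\partial^2 W}{\partial x_m \partial x_l}(0) + \frac{\partial W}{\partial x_N}(0)\,\frac{\partial^2 \varphi}{\partial x_m \partial x_l}(0) = 0.$$
Substituting $\frac{\partial^2 \varphi}{\partial x_m \partial x_l}(0) = \kappa_l\,\delta_{ml}$ produces exactly \eqref{ab7-19-29}. There is essentially no obstacle here: this is a standard implicit-differentiation argument, and the only thing to check carefully is that the mixed cross-terms $\frac{\partial^2 W}{\partial x_N\partial x_l}\frac{\partial \varphi}{\partial x_m}$, $\frac{\partial^2 W}{\partial x_m \partial x_N}\frac{\partial \varphi}{\partial x_l}$, and $\frac{\partial^2 W}{\partial x_N^2}\frac{\partial \varphi}{\partial x_l}\frac{\partial \varphi}{\partial x_m}$ all vanish at the origin because each of them carries at least one factor of $\frac{\partial \varphi}{\partial x_j}(0) = 0$.
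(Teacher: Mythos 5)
Your proof is correct: differentiating the identity $W(x',\varphi(x'))\equiv a$ once and twice in the tangential variables, and using $\partial_{x_l}\varphi(0)=0$, $\partial^2_{x_m x_l}\varphi(0)=\kappa_l\delta_{ml}$, gives exactly the two asserted formulas (the $\kappa_i$ in the statement should indeed read $\kappa_l$). The paper itself offers no proof, merely citing \cite{LPY19}, and your implicit-differentiation argument is precisely the standard one behind that reference, so there is nothing further to add.
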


\section{Linearization}

\begin{Lem}
Let $\xi=(\xi_1,\xi_2,\ldots,\xi_k)$ be the solution of the following problem
\begin{align}\label{eqxii}
(-\Delta)^s\xi_i+(1-pU^{p-1})\xi_i=-\frac{p-1}{ka_*}U(x)\sum_{j=1}^k
\int_{\R^N}U^p(x)\xi_{j}(x),\ \ i=1,\ldots,k.
\end{align}
Then, there holds that
\begin{align}\label{A2}
\xi_i(x)=\sum_{j=0}^N\gamma_{i,j}\Psi_j,
\end{align}
where $\gamma_{i,j}$ are constants, and
\begin{align}\label{Psi}
\Psi_0=U+\frac{p-1}{2s} x\cdot\nabla U,\ \ \Psi_j=\frac{\partial U}{\partial x_j},\ j=1,\ldots,N.
\end{align}
Moreover, $\gamma_{i,0}=\gamma_{l,0}$ for $i,l=1,\ldots,k$.
\end{Lem}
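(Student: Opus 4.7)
The strategy is to reduce the coupled system to a scalar non-homogeneous problem for the Frobenius operator $L_0=(-\Delta)^s+1-pU^{p-1}$, use the non-degeneracy result (iii) from the introduction to identify its kernel, and then exhibit an explicit particular solution provided by the scaling symmetry of \eqref{eqgs}.

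\emph{Step 1 (Scaling identity for $\Psi_0$).} The key observation is that $\Psi_0=U+\frac{p-1}{2s}x\cdot\nabla U$ arises by differentiating the scaling family $U_\lambda(x)=\lambda^{1/(p-1)}U(\lambda^{1/(2s)}x)$ at $\lambda=1$. Since $U_\lambda$ solves $(-\Delta)^sU_\lambda+\lambda U_\lambda=U_\lambda^p$, differentiating in $\lambda$ and evaluating at $\lambda=1$ gives
\[
L_0\Bigl(\tfrac{1}{p-1}U+\tfrac{1}{2s}x\cdot\nabla U\Bigr)=-U,
\qquad\text{i.e.,}\qquad L_0\Psi_0=-(p-1)U.
\]
By contrast, $L_0\Psi_j=0$ for $j=1,\dots,N$, since translations generate these elements of $\ker L_0$.

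\emph{Step 2 (Decoupling).} Set $c_j=\displaystyle\int_{\R^N}U^p(x)\xi_j(x)\,dx$ and $S=\sum_{j=1}^k c_j$. Then \eqref{eqxii} reads
\[
L_0\xi_i=-\frac{(p-1)S}{ka_*}\,U,\qquad i=1,\dots,k,
\]
which is now the same scalar equation for every $i$. Combining with Step 1 yields
\[
L_0\Bigl(\xi_i-\tfrac{S}{ka_*}\Psi_0\Bigr)=0,\qquad i=1,\dots,k.
\]

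\emph{Step 3 (Identification via non-degeneracy).} By assertion (iii) of the introduction,
$\ker L_0=\operatorname{span}\{\partial_{x_1}U,\dots,\partial_{x_N}U\}$ in the natural functional setting. Consequently, for each $i$ there exist constants $\gamma_{i,1},\dots,\gamma_{i,N}$ such that
\[
\xi_i-\tfrac{S}{ka_*}\Psi_0=\sum_{j=1}^N\gamma_{i,j}\,\partial_{x_j}U.
\]
Setting $\gamma_{i,0}:=\frac{S}{ka_*}$, we obtain the representation \eqref{A2}; since $\gamma_{i,0}=\frac{S}{ka_*}$ is manifestly independent of $i$, this proves $\gamma_{i,0}=\gamma_{l,0}$ for all $i,l=1,\dots,k$.

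\emph{Main obstacle.} The routine work all lies in verifying that Step 3 is legitimate, i.e., that $\xi_i-\frac{S}{ka_*}\Psi_0$ lies in the function space (with algebraic decay) where the Frank--Lenzmann--Silvestre non-degeneracy theorem applies. This is inherited from the blow-up setup: the pointwise bound \eqref{xidecay} on $\bar\xi_{a,i}$ passes to the limit $\xi_i$, yielding $|\xi_i(x)|\lesssim (1+|x|)^{-(N+2s-\theta)}$, and $\Psi_0$, $\partial_{x_j}U$ enjoy the decay from (ii) of the introduction. Once this decay is in hand, the solvability condition $\int U\,\partial_{x_j}U=0$ (automatic by the radial symmetry of $U$) is satisfied and the Fredholm-type application of non-degeneracy goes through. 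Everything else--in particular, the consistency check $\int U^p\Psi_0=a_*$ obtained from the Pohozaev identities of the introduction--is not strictly needed to state the form \eqref{A2}, but would be invoked if one wanted to pin down $S$ in terms of a single $\gamma_{i,0}$.
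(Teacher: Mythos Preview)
Your proof is correct and follows essentially the same approach as the paper: both rely on the scaling identity $L_0\Psi_0=-(p-1)U$ together with the non-degeneracy of $L_0$ to pin down the solution space. Your presentation is in fact more explicit than the paper's---you decouple the system via the scalar $S$ and reduce directly to $\ker L_0$, whereas the paper merely exhibits $\Psi$ and $\Psi^j$ as solutions and appeals (somewhat implicitly) to linear independence and non-degeneracy to conclude the representation; your version also makes clear why $\gamma_{i,0}$ is independent of $i$ without a separate back-substitution.
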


\begin{proof}

Setting $$\bar L(u)=(-\Delta)^su+(1-pU^{p-1})u~~\mbox{and}~~\Psi^j=(\Psi_j,\ldots,\Psi_j),$$ then it is easy to find that $\bar L(\Psi^j)=0$ and $\Psi^j$ solve \eqref{eqxii} for $j=1,\ldots,N$.

 On the other hand, we denote $$\Psi=(U+\frac{p-1}{2s}x\cdot\nabla U,\ldots,U+\frac{p-1}{2s}x\cdot\nabla U),$$ and then
by checking the extension equation associated with \eqref{eqxii}, we could obtain that $\Psi$ solves \eqref{eqxii} too. Noting that $\Psi,\Psi^j$ are linearly independent,
we obtain \eqref{A2}.

Moreover, putting \eqref{A2} into \eqref{eqxii}, we obtain that
$\gamma_{i,0}=\gamma_{l,0}$ for any $i,l=1,\ldots,k$.

\end{proof}

\medskip

 \noindent\textbf{Acknowledgments}
The authors would like to thank Professor Wenxiong Chen for the helpful discussion with him.
Guo was supported by NSFC grants (No.11771469). Luo and Wang were supported by the Fundamental
Research Funds for the Central Universities(No.KJ02072020-0319). Luo was supported by NSFC grants (No.11701204, No.11831009).
Wang was supported by NSFC grants (No.12071169). Yang was supported by NSFC grants (No.11601194).


\end{document}